\definecolor{dark-blue}{rgb}{0,0,0.6} 	
\definecolor{Purple}{rgb}{0.2,0,0.25}
\newcommand{\bref}[1]{\textbf{\ref{#1}}} 
\newcommand{\beqref}[1]{\textbf{(\ref{#1})}} 
\theoremstyle{plain} 
\newtheorem{thm}{Theorem}[section]
\newtheorem{lem}[thm]{Lemma}
\newtheorem{defin}[thm]{Definition}
\newtheorem{cor}[thm]{Corollary}
\newtheorem{prop}[thm]{Proposition}
\theoremstyle{definition}
\newtheorem{remark}[thm]{Remark}
\newtheorem{expl}[thm]{Example}
\numberwithin{equation}{section}
\newcommand{\wt}{\widetilde}
\newcommand{\abs}{\textnormal{abs}}
\newcommand{\conv}{\textnormal{conv}}
\newcommand{\R}{\mathbb{R}}
\newcommand{\N}{\mathbb{N}}
\newcommand{\M}{\mathscr{M}}
\newcommand{\Kbo}{\mathscr{K}_{\textnormal{bound},(0)}}
\subjclass[2010]{47H10, 52A05, 44A15, 46C05, 06D50, 46B20, 90C22}
\keywords{Converse of the Lax-Milgram theorem, ellipsoid, fixed point, Minkowski functional, polar set, positive definite operator}
\begin{document}
\date{April 8, 2019}

\title[Fixed points of polarity type operators]{Fixed points of polarity type operators}

\author{Daniel Reem}
\address{Daniel Reem, Department of Mathematics, The Technion -- Israel Institute of Technology, 3200003 Haifa, Israel.} 
\email{dream@technion.ac.il}
\author{Simeon Reich}
\address{Simeon Reich, Department of Mathematics, The Technion -- Israel Institute of Technology, 3200003 Haifa, Israel.} 
\email{sreich@technion.ac.il}
\maketitle

\begin{abstract}
A well-known result says that the Euclidean unit ball is the unique fixed point of the polarity operator. This result implies that if, in $\R^n$, the unit ball of some norm is equal to the unit ball of the dual norm, then the norm must be Euclidean. Motivated by these results and by relatively recent results in convex analysis and convex geometry regarding various properties of order reversing operators, we consider, in a real Hilbert space setting, a more general fixed point equation in which the polarity operator is composed with a continuous invertible linear operator. We show that if the linear operator is positive definite, then the considered equation is uniquely solvable by an ellipsoid. Otherwise, the equation can have several (possibly infinitely many) solutions or no solution at all.  Our analysis yields a few by-products of possible independent interest, among them results related to coercive bilinear forms (essentially a quantitative convex analytic converse to the celebrated Lax-Milgram theorem from partial differential equations) and a characterization of real Hilbertian spaces. 
\end{abstract}

\section{Introduction}
\subsection{Background:}
Consider the following geometric fixed point equation: 
\begin{equation}\label{eq:FixedPointConvexGeometry}
C=(GC)^{\circ}.
\end{equation}
Here $C\neq\emptyset$ is the unknown subset which is assumed to be contained in a given real Hilbert space $X\neq\{0\}$, $G:X\to X$ is a given continuous, invertible and linear operator, $GC:=\{Gc: c\in C\}$, and $S^{\circ}$ denotes the polar (or dual) of $\emptyset\neq S\subseteq X$ (see \beqref{eq:C^polar} below). 

In this paper we analyze and solve \beqref{eq:FixedPointConvexGeometry} under various assumptions on $C$ and on $G$. The motivation to consider \beqref{eq:FixedPointConvexGeometry} is based on a number of reasons. First, \beqref{eq:FixedPointConvexGeometry} is a generalization of the equation 
\begin{equation}\label{eq:C=Cpolar}
C=C^{\circ},
\end{equation}
which describes all the self-polar sets. A well-known and classical result in convex geometry says that there exists a unique self-polar set and this set is the unit ball (see, for example, \cite[p. 138]{Barvinok2002book}, \cite[pp. 144--145]{BauschkeCombettes2017book}, \cite[p. 148]{Hiriart-UrrutyLemarechal2001book}). This result implies that if we start with $\R^n$ and want to define on it a norm such that the unit ball induced by this norm coincides with the unit ball of the dual norm, then we can do this if and only if the norm is Euclidean (here both balls are considered  subsets of $\R^n$ and we identify $\R^n$ with its dual space).

A second reason for considering \beqref{eq:FixedPointConvexGeometry} originates in a relatively recent branch of research in convex geometry. In some of the works belonging to this branch, certain order reversing operators (such as  isomorphisms, involutions, or operators satisfying certain functional equations involving sets) acting on various classes of finite-dimensional geometric objects were considered. A central property that was established there was that these operators must have the form $T(C)=LC^{\circ}$, where $L$ is some invertible  linear operator. For instance, in \cite[Corollary, p. 659]{BoroczkySchneider2008jour} the objects are compact and convex subsets of $\R^n$ containing the origin in their interior, in \cite[Corollary 1]{Schneider2008jour} the objects are closed and convex cones, in \cite[Theorem 10]{Artstein-AvidanMilman2008},  \cite[Corollary 1.14]{MilmanSegalSlomka2011jour} and \cite[Corollary 5]{Slomka2011jour} the objects are closed and convex subsets of $\R^n$ containing the origin, and in \cite[Corollary 1.11]{Artstein-AvidanSlomka2012jour} the objects are $n$-dimensional centrally symmetric ellipsoids (in all of these works $n\in\N$ satisfies either $n\geq 2$ or $n\geq 3$; see also \cite[Theorem 4]{SegalSlomka2013chapt} for a closely related but somewhat different result based on a characterization involving fractional linear mappings). Equation \beqref{eq:FixedPointConvexGeometry} is directly related to these works because, as a simple verification shows (see Lemma \bref{lem:gamma-C-is-one-to-one}\beqref{item:gamma_C composition} below), it can be written as 
\begin{equation}\label{eq:C=LCpolar}
C=LC^{\circ},
\end{equation}
where $L=(G^*)^{-1}$. In other words, the operator on the right-hand side of \beqref{eq:FixedPointConvexGeometry}, namely the one which maps $C$ to $(GC)^{\circ}$ (we consider this operator to be a ``polarity type'' operator), can be written as $T(C)=LC^{\circ}$, as in the works mentioned above.  Hence our work can be thought of as being a continuation of the above-mentioned branch of research in the ``fixed point direction''. 

A third reason for considering \beqref{eq:FixedPointConvexGeometry} is the following fixed point equation which has recently been introduced in \cite[Equation (1.1)]{IusemReemReich2019jour}: 
\begin{equation}\label{eq:f_Tf}
f(x)=\tau f^*(Ex+c)+\langle w,x\rangle +\beta,\quad x\in X. 
\end{equation}
Here $X$ is a real Hilbert space, $f:X\to[-\infty,\infty]$ is 
the unknown function, $\tau>0$, $c\in X$, $w\in X$, $\beta\in\R$ are given, and $E:X\to X$ is a 
given continuous linear invertible operator. In addition, 
\begin{equation}\label{eq:f^*}
f^*(x^*):=\sup\{\langle x^*,x\rangle-f(x): x\in X\}, \quad x^*\in X, 
\end{equation}
is the Legendre-Fenchel transform (namely, the convex conjugate) of $f$. Equation \beqref{eq:f_Tf} can be thought of as being a convex analytic version of \beqref{eq:FixedPointConvexGeometry} not only because of some similarities in their structure, but also because of several similarities in the properties of the corresponding solution sets. For example, in both cases the solution sets are very sensitive to the various parameters which appear there (see Subsection \bref{subsec:Contributions} and Theorem \bref{thm:FixedPointConvexGeometry} below regarding  \beqref{eq:FixedPointConvexGeometry}). In addition, as we show in Lemma \bref{lem:f=f^*G*} and  Sections \bref{sec:X=R}--\bref{sec:NonExistence}, there is a strong relation between some of the results mentioned in \cite{IusemReemReich2019jour} (for instance, Lemma 5.1, Lemma 5.2, Lemma 7.1, Proposition 9.1, Example 13.2) and some of the results of our paper.  

We note that if in \beqref{eq:f^*} one restricts attention to lower semicontinuous convex and proper functions, then the operator $T$ which maps each $f$ to the right-hand side of \beqref{eq:f_Tf} is the most general fully order reversing operator which acts on this class of functions (namely, it is invertible, both the operator and its inverse reverse the point-wise order between functions, and any other order reversing operator which acts on the class of lower semicontinuous convex and proper functions $f:X\to(-\infty,\infty]$ must have the form $T$): this is shown in  \cite[Theorem 7]{ArtsteinMilman2009} (finite-dimensional spaces) and \cite[Theorem 2]{IusemReemSvaiter2015jour} (arbitrary infinite-dimensional Banach spaces; here a few modifications are needed regarding the various parameters and variables  which appear on the right-hand side of \beqref{eq:f_Tf}, among them that $Tf$ is defined on the dual $X^*$ of $X$ and is lower  semicontinuous in the weak$^*$ topology, and that $\langle x^*,x\rangle:=x^*(x)$ for each $x\in X$ and $x^*\in X^*$). It is worth noting that several years ago other convex analytic versions of \beqref{eq:f_Tf} were discussed: an equation which characterizes self-polar functions \cite{Rotem2012jour} and versions related to generalized self-conjugate functions \cite{AlvesSvaiter2011, Svaiter2003}.

Finally, we note that as far as we know, there has been no systematic attempt to investigate \beqref{eq:FixedPointConvexGeometry} so far. However, one can see, in a few cases which are scattered in the literature, that particular cases of \beqref{eq:FixedPointConvexGeometry} have been considered briefly,  mainly in a different terminology. For instance, we have already mentioned places where \beqref{eq:C=Cpolar} has been discussed. In addition, in \cite[pp. 122, 130]{BauschkeCombettes2017book}, \cite[p. 140]{Cegielski2012book}, \cite[pp. 23, 28]{DraganMorozanStoica2010book}, and \cite[p. 392]{Goffin1980jour} it is said briefly that some cones solve \beqref{eq:FixedPointConvexGeometry} in the special case where $G=-I$ (see Example \bref{ex:Cone} below for more details regarding this latter claim; in \cite{BauschkeCombettes2017book, DraganMorozanStoica2010book} these cones are called ``self-dual''). Furthermore, in \cite[p. 147]{Barvinok2002book} a few examples are given of polytopes in a finite-dimensional Euclidean space (namely, compact polyhedra having 0 in their interiors) which satisfy \beqref{eq:FixedPointConvexGeometry} for some invertible linear operators $G$ (such polytopes are called ``self-dual'' \cite[p. 147]{Barvinok2002book}). The point of view there is, however, different from the point of view of our paper, since in \cite[p. 147]{Barvinok2002book} one starts with some closed and convex subset $C$ and tries to find an invertible linear operator $G$ such that \beqref{eq:C=LCpolar} (hence \beqref{eq:FixedPointConvexGeometry}) will hold. In other words, the unknown in \cite[p. 147]{Barvinok2002book} is $G$ and not $C$. 

\subsection{Contributions:}\label{subsec:Contributions}
The main result of this paper is Theorem \bref{thm:FixedPointConvexGeometry} below which analyzes \beqref{eq:FixedPointConvexGeometry} and describes its set of solutions under some assumptions on the linear operator $G$ and on the class of sets in which we seek the unknown $C$. This theorem shows that \beqref{eq:FixedPointConvexGeometry} can have no solution, a unique solution or several (possibly infinitely many) solutions. More precisely, the theorem states the following: 
\begin{thm}\label{thm:FixedPointConvexGeometry}
Let $(X,\langle\cdot,\cdot\rangle)$ be a real Hilbert space (satisfying $X\neq\{0\}$) and let $G:X\to X$ be a continuous and invertible linear operator. Consider equation \beqref{eq:FixedPointConvexGeometry} with an unknown $\emptyset\neq C\subseteq X$. The following statements hold:
\begin{enumerate}[(i)]
\item\label{item:ClosedConvex} Any solution to \beqref{eq:FixedPointConvexGeometry} must be closed and convex,  and must contain 0. 
\item\label{item:PositiveDefinite} If $G$ is positive definite, then there exists a unique solution to \beqref{eq:FixedPointConvexGeometry} and this solution is an ellipsoid having the form $C=\{x\in X: \langle Gx,x\rangle\leq 1\}$. 
\item\label{item:NotPositiveDefinite} If $G$ is not positive definite, then there are cases where \beqref{eq:FixedPointConvexGeometry} has several (possibly infinitely many) solutions  and cases where \beqref{eq:FixedPointConvexGeometry} does not have any solution which belongs to the class of  bounded subsets of $X$ that contain 0 in their interiors. 
\end{enumerate}
\end{thm}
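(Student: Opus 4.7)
The plan is to address the three parts in turn, using basic polarity properties for (i), a square-root change of variables for (ii), and explicit constructions for (iii). Part (i) is immediate: the polar of any nonempty subset $S\subseteq X$ is the intersection over $s\in S$ of the closed half-spaces $\{x : \langle x, s\rangle \leq 1\}$, each of which contains the origin, so $(GC)^{\circ}$ is automatically closed, convex, and contains $0$; hence any $C$ solving \beqref{eq:FixedPointConvexGeometry} inherits these properties.

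For (ii), taking ``positive definite'' to include self-adjointness (as the stated formula requires, since $\langle Gx,x\rangle$ depends only on the symmetric part of $G$), the natural approach is to exploit the self-adjoint positive square root $G^{1/2}$, whose existence is guaranteed by the spectral theorem. The key tool is the identity $(TS)^{\circ}=(T^*)^{-1}(S^{\circ})$, valid for any continuous linear bijection $T$. Setting $T=G^{1/2}$ (self-adjoint) and making the substitution $D:=G^{1/2}C$, equation \beqref{eq:FixedPointConvexGeometry} becomes $G^{-1/2}D=(G^{1/2}D)^{\circ}=G^{-1/2}(D^{\circ})$, which, after canceling the injective $G^{-1/2}$, reduces to the self-polarity equation $D=D^{\circ}$. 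The classical uniqueness of the self-polar set (namely the closed unit ball $B$) then forces $D=B$, and hence $C=G^{-1/2}(B)=\{x\in X:\langle Gx,x\rangle\leq 1\}$. Running the same identity in the forward direction verifies that this candidate indeed solves \beqref{eq:FixedPointConvexGeometry}, so existence and uniqueness follow simultaneously.

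For (iii) I would exhibit two explicit examples. Taking $G=-I$, \beqref{eq:FixedPointConvexGeometry} becomes $C=-C^{\circ}$, and for any closed convex cone $K\ni 0$ the identity $K^{\circ}=-K^*$ (with $K^*$ the dual cone) reduces the equation to the self-duality condition $K=K^*$. A wealth of self-dual cones in $\R^n$ for $n\geq 2$ (quarter-plane wedges in $\R^2$, the Lorentz cone, $\R^n_+$, and so on), together with the unit ball itself, then furnishes infinitely many solutions. For non-existence in the class of bounded sets with $0$ in the interior, the key observation is that if $C$ is any such solution, then applying the polar to \beqref{eq:FixedPointConvexGeometry} and invoking the bipolar theorem gives $C^{\circ}=GC$, and a further polar yields $C=(GC)^{\circ}=G^{-*}(C^{\circ})=G^{-*}GC$; hence $C$ must be invariant under $T:=G^{-*}G$. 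A short dynamical argument (the invariance propagates to every iterate $T^n$, and a bounded $T$-invariant set containing $0$ in its interior forces $\|T^n\|$ to remain bounded) rules out any such $C$ as soon as $T$ has spectral radius strictly greater than $1$. Choosing $G$ accordingly --- for instance $G=\bigl(\begin{smallmatrix}0&-2\\ 1&0\end{smallmatrix}\bigr)$ on $\R^2$, for which $G^{-*}G=\mathrm{diag}(-1/2,-2)$ has spectral radius $2$ --- completes the example. The main obstacle I anticipate is precisely this non-existence claim, since it requires excluding \emph{every} bounded solution with $0$ in its interior, not merely the ellipsoidal ones; the $T$-invariance reduction converts the problem into a clean spectral criterion on $G^{-*}G$ that is both easy to state and easy to arrange.
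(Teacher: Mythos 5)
Your proposal is correct, and two of its components take a genuinely different route from the paper. Part (i) and the non-uniqueness half of part (iii) coincide with the paper's treatment (Proposition \bref{prop:ClosedConvex0} and Example \bref{ex:Cone}). For part (ii), the paper deliberately avoids the square root $G^{1/2}$: it verifies directly that $D=\{x\in X:\langle Gx,x\rangle\le 1\}$ solves \beqref{eq:FixedPointConvexGeometry} via a polarity formula for ellipsoids (Lemma \bref{lem:Ellipsoid}, itself proved by convex-conjugate calculus rather than the spectral theorem), and then gets uniqueness by a squeeze: any solution $C$ satisfies $C\subseteq D$ because $x\in C=(GC)^{\circ}$ gives $\langle x,Gx\rangle\le 1$, and order reversal of polarity then forces $C\supseteq D$. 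Your substitution $D:=G^{1/2}C$, which reduces everything to the classical uniqueness of the self-polar set, is shorter; the paper's elementary route is what lets its key coercivity lemma generalize to Banach spaces (Remark \bref{rem:LaxMilgram}, Lemma \bref{lem:EllipticOperatorGeneraized}). The most interesting divergence is in the non-existence half of part (iii). The paper introduces semi-skew operators, passes to $f=\tfrac12\M_C^2$, derives the conjugacy equation $f=f^*\circ G^*$ and from it the functional equation $f=f\circ (G^*)^{-1}G$, and concludes with a convexity and upper-boundedness argument (Lemmas \bref{lem:FunctionalEquation}--\bref{lem:NonExistenceSemiSkew} and Lemma \bref{lem:UpperBound}). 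Your argument stays at the level of sets: the bipolar theorem gives $C^{\circ}=GC$, hence $C=(G^*)^{-1}GC=TC$ with $T:=(G^*)^{-1}G$, and boundedness of $C$ together with $0$ in its interior forces $\sup_n\|T^n\|<\infty$, which fails whenever the spectral radius of $T$ exceeds $1$. This is the very same invariance operator $T=E^{-1}E^*$ (with $E=G^*$) that drives the paper's functional equation, but your spectral-radius criterion is cleaner and strictly more general than the semi-skew class; in fact your matrix $\bigl(\begin{smallmatrix}0&-2\\1&0\end{smallmatrix}\bigr)$ is itself semi-skew with respect to $(e_1,-1,-2)$, so your non-existence conclusion is consistent with Proposition \bref{prop:NonExistence}. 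The only points you should make explicit when writing this up are that $GC$ is closed (it is the preimage of the closed set $C$ under the continuous map $G^{-1}$), so that the bipolar theorem applies to it, and that $G^{1/2}$ is invertible with continuous inverse (which follows from the invertibility of $G$ and the open mapping theorem).
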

Our analysis, which is somewhat different from analyses that are frequently used in fixed point theory \cite{GranasDugondji2003book, KirkSims2001book}, yields a few by-products of possibly independent interest, among them results related to coercive bilinear forms (essentially a quantitative converse of the celebrated Lax-Milgram theorem from partial differential equations: see Lemma \bref{lem:EllipticOperator}, Remark \bref{rem:Coercive}, and Lemma \bref{lem:EllipticOperatorGeneraized}), a characterization of real Hilbertian  spaces (namely Banach spaces which are isomorphic to Hilbert spaces: see Remark \bref{rem:LaxMilgram}) and  results related to infinite-dimensional convex geometry (for instance, Lemma \bref{lem:(gamma_C)_polar=gamma_(C_polar)} and Lemma \bref{lem:Ellipsoid}). We also note that although our analysis is essentially dimension-free (in the sense that no quantitative expressions involving the dimension appear; the only exception is Example \bref{ex:Simplex} below), in some cases the dimension does appear in ``the back door'': for instance, in Proposition \bref{prop:X=R} we consider the case where $X=\R$ and classify completely the set of solutions to \beqref{eq:FixedPointConvexGeometry}, and in Examples \bref{ex:Ellipsoid}--\bref{ex:L1Linfty} we present a few two-dimensional examples of non-uniqueness. 

The intuition behind Theorem \bref{thm:FixedPointConvexGeometry} is mainly inspired by  analogous results mentioned in \cite{IusemReemReich2019jour}, especially \cite[Theorem 3.1]{IusemReemReich2019jour}, but one can get some intuition also by thinking of particular cases, such as the ones presented in Figures \bref{fig:SquareRhombus}--\bref{fig:Orthant}. The  proof of Theorem \bref{thm:FixedPointConvexGeometry} can be described briefly as follows: Part \beqref{item:ClosedConvex} is just an  immediate consequence of \beqref{eq:FixedPointConvexGeometry} and the definitions; Part \beqref{item:PositiveDefinite} is based on certain properties of ellipsoids, mainly the ones described in Lemma \bref{lem:Ellipsoid}; the proof of Part \beqref{item:NotPositiveDefinite} is by separating into cases: the existence part is again related to properties of ellipsoids and also to a certain operator equation (see \beqref{eq:AG}) which allows one to construct an explicit solution to  \beqref{eq:FixedPointConvexGeometry} based on some properties of the operator $G$; the proof of the non-uniqueness case is itself done by separating into subcases: the one-dimensional subcase is completely analyzed directly, and in higher dimensions the analysis is based on either another  operator equation (see \beqref{eq:MoreSolutions}) or a direct analysis related to specific examples which are closely related to Figures \bref{fig:SquareRhombus}--\bref{fig:Orthant} below; finally, the proof of the non-existence case is based on introducing a special type of operators called ``semi-skew operators'', choosing $G$ to be such an operator, and using several relations between polar sets and functions, as well as several connections between \beqref{eq:FixedPointConvexGeometry} and \cite{IusemReemReich2019jour} and a general convex analytic lemma (Lemma \bref{lem:UpperBound}), to obtain a contradiction if a solution to \beqref{eq:FixedPointConvexGeometry} is assumed (in the class of  bounded subsets of $X$ that contain 0 in their interiors). 

\subsection{Paper layout:} After some preliminaries which are given in Section \bref{sec:Preliminaries}, we present in Section \bref{sec:Tools} several auxiliary results needed in the proof of Theorem \bref{thm:FixedPointConvexGeometry}. The proof is developed in Sections \bref{sec:GisPositiveDefinite}--\bref{sec:NonExistence} and is presented formally in Section \bref{sec:Proof}. Section \bref{sec:ConcludingRemarks} concludes the paper with a few remarks and open problems. We end the paper with an appendix (Section \bref{sec:Appendix}) which contains the proofs of various assertions mentioned in the text without proof.

\section{Preliminaries}\label{sec:Preliminaries}
Throughout the paper we assume that $X$ is a real Hilbert space satisfying $X\neq\{0\}$ and endowed with an inner product $\langle\cdot,\cdot\rangle$. The induced norm is denoted by $\|\cdot\|$. We say that $f:X\to [-\infty,\infty]$ is proper whenever $f(x)>-\infty$ for all $x\in X$ and, in addition, $f(x)\neq\infty$ for at least one point $x\in X$. 
The convex conjugate (Legendre-Fenchel transform) of $f$ is the function 
$f^*:X\to [-\infty,\infty]$ which is defined in \beqref{eq:f^*}. 

Given a linear and continuous operator
$G:X\to X$, its adjoint is the linear operator $G^*:X\to X$ defined by 
the equation $\langle G^*a,b\rangle =\langle a,Gb\rangle$ for all $(a,b)\in
X^2$. It is well known that $G^*$ exists, is unique, and is continuous (well-known results mentioned here without a proof can be found, for instance, in \cite{Brezis2011book, Kreyszig1978book, Shalit2017book}). The norm of $G$ is $\|G\|:=\sup\{\|Gx\|/ \|x\|: 0\neq x\in X\}$. It is well known that  $G$ is continuous if and only if it is bounded (namely $\|G\|<\infty$), and if $G$ is invertible, then its inverse $G^{-1}$ is continuous. We say that $G$ is self-adjoint (or symmetric) if $G$ is continuous and $G=G^*$. If $G$ is self-adjoint and satisfies 
$\langle Gx,x\rangle\geq 0$ for all $x\in X$, then we say that $G$ is positive semidefinite. A self-adjoint operator $G:X\to X$ satisfying 
$\langle Gx,x\rangle>0$ for all $0\neq x\in X$ is called positive definite. We denote by $I:X\to X$ the identity operator, that is, $I(x):=x$ for each $x\in X$. For a subset $\emptyset\neq C\subseteq X$ we denote $GC:=\{Gc: c\in C\}$. 

We say that $B:X^2\to \R$ is a bilinear form whenever both $x\mapsto B(x,y)$ and $y\mapsto B(x,y)$ are linear functions from $X$ to $\R$ for each $y\in X$ and $x\in X$, respectively. It is a well-known fact that if $B$ is a continuous bilinear form, then there exists a unique continuous linear operator $A:X\to X$ satisfying $B(x,y)=\langle Ax,y\rangle$ for all $x,y\in X$. In this case we say that $A$ generates $B$.  We say that a bilinear form $B$ is coercive (instead of ``coercive'', the terms ``elliptic'', ``strongly coercive'' and ``strongly monotone'' are of use too) if there exists $\beta>0$ (the coercivity coefficient of $B$) such that $B(x,x)\geq \beta\|x\|^2$ for each $x\in X$. 

For a unit vector $u\in X$ we  denote by $u^{\bot}$ the set $\{x\in X: \langle x,u\rangle=0\}$, namely the orthogonal complement of $u$. It is well known  that $X$ is isometric to $\R u\times u^{\bot}$ (endowed with the inner product $\langle (r_1u,v_1),(r_1u,v_2)\rangle:=r_1r_2+\langle v_1,v_2\rangle$ for every $r_1,r_2\in \R$, $v_1,v_2\in u^{\bot}$). The orthogonal projection from $X$ onto a closed linear subspace $M$ of $X$ is denoted by $P_M$. For $x,y\in X$, we denote by $[x,y]$ the closed  line segment which connects $x$ and $y$, namely the set $\{x+t(y-x): t\in [0,1]\}$. We denote by $\Kbo(X)$ the set of all bounded, convex and closed subsets $C$ of $X$ having 0 in their interior. We say that $C\subseteq X$ is a centrally symmetric ellipsoid if it has the form $C=\{x\in X: \langle Gx,x\rangle\leq 1\}$ for some positive definite operator $G:X\to X$. 

Given a nonempty subset $C\subseteq X$, the gauge (or Minkowski functional) associated with $C$ is the function $\M_C:X\to[0,\infty]$ defined by 
\begin{equation}\label{eq:gamma_C}
\M_C(x):=\inf\{\mu\geq 0: x\in\mu C\},\quad x\in X,
\end{equation}
where, of course, $\mu C=\{\mu c: c\in  C\}$ and $\inf\emptyset:=\infty$. If we further assume that $C\in \Kbo(X)$, then it can easily be verified that 
\begin{equation}\label{eq:Inequality gamma_C norm}
\frac{\|x\|}{\|C\|}\leq\M_C(x)\leq \frac{\|x\|}{r_C}, \quad\,\forall\, x\in X,
\end{equation}
where $r_C>0$ is the radius of any open ball which is contained in $C$ and containing the origin, and $\|C\|:=\sup\{\|c\|: c\in C\}\geq r_C$. In particular, in this case $\M_C$ is finite everywhere, $\M_C(0)=0$, and $\M_C(x)>0$ for every $x\neq 0$, and, moreover, $\M_C$ is positively homogenous (namely $\M_C(\lambda x)=\lambda\M_C(x)$ for all $x\in X$ and $\lambda\geq 0$). In addition, $\M_C$ is subadditive \cite[p. 26]{VanTiel1984book}, that is, $\M_C(x+y)\leq \M_C(x)+\M_C(y)$ for all $x,y\in X$. Combining this inequality with the fact that $\M_C$ is positively homogenous, we see that $\M_C$ is convex. Furthermore, the assumption $C\in \Kbo(X)$ also implies that $\M_C$ is Lipschitz continuous because its subadditivity and \beqref{eq:Inequality gamma_C norm} imply the inequality 
\begin{equation}\label{eq:M_CIsLipschitz}
|\M_C(x)-\M_C(y)|\leq\max\{\M_C(x-y),\M_C(y-x)\}\leq \frac{1}{r_C}\|x-y\|,\quad\forall x,y\in X.
\end{equation}

 Given $C\in \Kbo(X)$, the polar of $\M_C$ is the function $\M_C^{\circ}:X\to[0,\infty]$ defined by 
\begin{multline}\label{eq:gamma_C^polar}
\M_C^{\circ}(x^*):=\sup\left\{\frac{\langle x^*,x\rangle}{\M_C(x)}: 0\neq x\in X\right\}\\
=\sup\left\{\langle x^*,x\rangle: x\in X, \M_C(x)=1 \right\},\quad x^*\in X,
\end{multline}
where the right-most expression follows from the fact that $\M_C$ is positively homogenous (and the left-most inequality in \beqref{eq:Inequality gamma_C norm} ensures that $\M_C(x)>0$ for all $0\neq x\in X$). Since $C\in \Kbo(X)$, a simple verification shows that $\M_C^{\circ}$ is finite and positive everywhere with the exception of the origin at which it vanishes. Actually, in this case  
\begin{equation}\label{eq:Inequality gamma_C^polar}
r_C\|x^*\|\leq \M_C^{\circ}(x^*)\leq\|C\|\|x^*\|,\quad \forall x^*\in X,
\end{equation}
where the right-most inequality follows from \beqref{eq:Inequality gamma_C norm} and the Cauchy-Schwarz inequality, and the left-most inequality is a consequence of the well-known fact that $\|x^*\|=\sup\{\langle x^*,x\rangle: x\in X, \|x\|=1\}$ (indeed, this latter equality implies that for each $\alpha\in (0,1)$, there exists $x_{\alpha}\in X$ such that $\|x_{\alpha}\|=1$ and $\langle x^*,x_{\alpha}\rangle\geq \alpha\|x^*\|$; thus from \beqref{eq:gamma_C^polar} and \beqref{eq:Inequality gamma_C norm} we have $\M_C^{\circ}(x^*)\geq \langle x^*,x_{\alpha}\rangle/\M_C(x_{\alpha})\geq \alpha\|x^*\|r_C$ for all $\alpha\in (0,1)$, namely $\M_C^{\circ}(x^*)\geq\|x^*\|r_C$, as claimed).  Moreover, \beqref{eq:gamma_C^polar} implies that $\M_C^{\circ}$ is positively homogenous and subadditive. Thus if $C\in \Kbo(X)$, then also $\M_C^{\circ}$ is convex and Lipschitz continuous.

The polar (or dual) of $\emptyset\neq C\subseteq X$ is the set 
\begin{equation}\label{eq:C^polar}
C^{\circ}:=\{x^*\in X: \langle x^*,c\rangle\leq 1\,\,\forall c\in C\}.
\end{equation}

\begin{figure}[t]
\begin{minipage}[t]{0.45\textwidth}
\begin{center}{\includegraphics[scale=0.57]{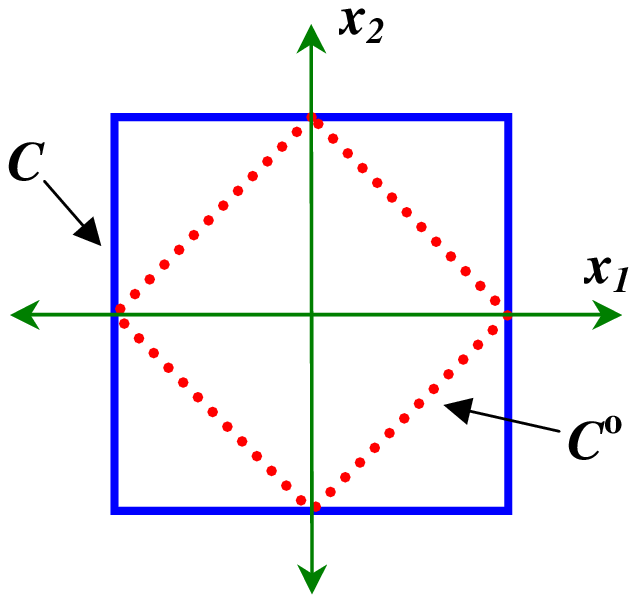}}
\end{center}
 \caption{$C=\{(x_1,x_2)\in\R^2: \max\{|x_1|,|x_2|\}\leq 1\}$, $C^{\circ}=\{(x_1,x_2)\in\R^2: |x_1|+|x_2|\leq 1\}$ (only boundaries are shown); see also Example \bref{ex:L1Linfty}.}
\label{fig:SquareRhombus}
\end{minipage}
\hfill
\begin{minipage}[t]{0.45\textwidth}
\begin{center}
{\includegraphics[scale=0.63]{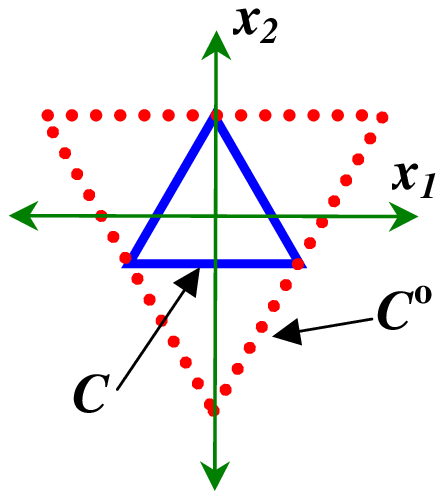}}
\end{center}
 \caption{$C$ is a regular 2D simplex of circumradius 1, $C^{\circ}$ is a regular simplex of circumradius 2 (only boundaries are shown); see also Example \bref{ex:Simplex}.}
\label{fig:Triangle}
\end{minipage}
\vfill
\begin{minipage}[t]{0.45\textwidth}
\begin{center}{\includegraphics[scale=0.65]{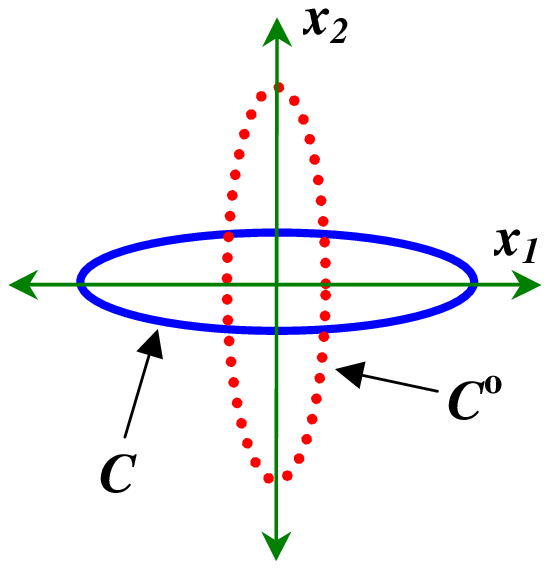}}
\end{center}
 \caption{$C=\{(x_1,x_2)\in\R^2: (1/4)x_1^2+4x_2^2\leq 1\}$, $C^{\circ}=\{(x_1,x_2)\in\R^2: 4x_1^2+(1/4)x_2^2\leq 1\}$ (only boundaries are shown); see also Lemma \bref{lem:Ellipsoid}, Proposition \bref{prop:PositiveDefiniteG} and Example \bref{ex:Ellipsoid}}
\label{fig:Ellipse}
\end{minipage}
\hfill
\begin{minipage}[t]{0.45\textwidth}
\begin{center}
{\includegraphics[scale=0.65]{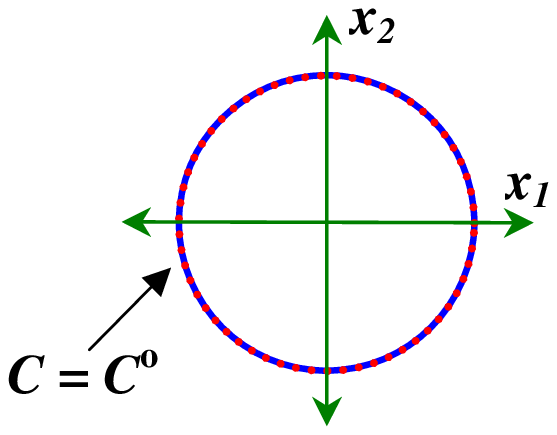}}
\end{center}
 \caption{$C$ is the unit disc, and so is $C^{\circ}$ (only boundaries are shown); see also Lemma \bref{lem:Ellipsoid} and  Proposition \bref{prop:PositiveDefiniteG}.}
\label{fig:Disc}
\end{minipage}
\vfill
\begin{minipage}[t]{1\textwidth}
\begin{center}
{\includegraphics[scale=0.58]{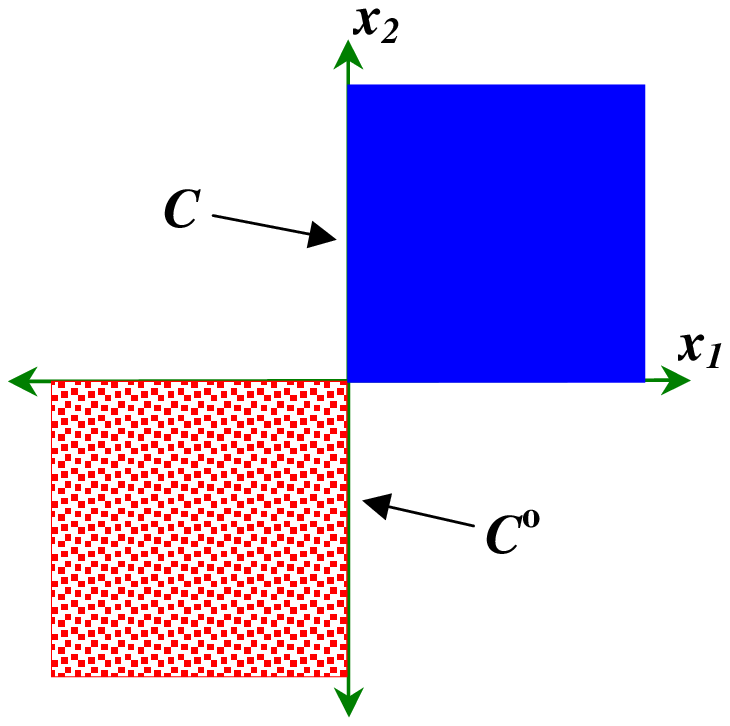}}
\end{center}
 \caption{$C$ is the nonnegative orthant, $C^{\circ}$ is the nonpositive orthant; see also Example  \bref{ex:Cone}.}
\label{fig:Orthant}
\end{minipage}
\end{figure}

It can immediately be verified  that $0\in C^{\circ}$ and that $C^{\circ}$ is closed and convex. Moreover, the map $C\mapsto C^{\circ}$ is order reversing, namely, if $\emptyset\nsubseteq C_1\subseteq C_2\subseteq X$, then $C_2^{\circ}\subseteq C_1^{\circ}$. A few illustrations of sets and their polar sets are given in Figures \bref{fig:SquareRhombus}--\bref{fig:Orthant}. It can be seen, at least intuitively, that for all of the sets $C$ mentioned in these figures it is possible to find an invertible linear operator which transforms $C^{\circ}$ into $C$, and so these sets satisfy \beqref{eq:C=LCpolar} and hence also \beqref{eq:FixedPointConvexGeometry}. 

The above-mentioned concepts are, of course, closely related to norms and dual norms. Indeed, if $C\in \Kbo(X)$ is symmetric with respect to the origin (that is, $C=-C)$, then it is immediate to check that $\M_C(x)=\M_C(-x)$ for all $x\in X$. Hence \beqref{eq:Inequality gamma_C norm} implies that $\M_C$ is a norm which is equivalent to the Hilbertian norm. Moreover, Lemma \bref{lem:gamma-C-is-one-to-one}\beqref{item:C=C(1)} below implies that $C$ is the unit ball associated with $\M_C$. The  definition of the dual norm and \beqref{eq:gamma_C^polar} ensure that the dual norm of $\M_C$ is $\M_C^{\circ}$ (that is, $\M_{C^{\circ}}$, as follows from Lemma \bref{lem:(gamma_C)_polar=gamma_(C_polar)}), and when combined with \beqref{eq:C^polar}, this implies that the dual unit ball of $C$ is $C^{\circ}$. Finally, since $\M_C$ is equivalent to the Hilbertian norm, the dual space of $\M_C$ is the same as $X^*\cong X$.

\section{Auxiliary results}\label{sec:Tools}
In this section we present a few auxiliary results which are used in later sections (Sections \bref{sec:GisPositiveDefinite}--\bref{sec:NonExistence}). Some of these results partly or fully extend, to an infinite-dimensional setting, several well-known results mentioned in \cite[Section 15]{Rockafellar1970book} in one way or another. As far as we know, most of the results below, in particular, the ones related to ellipsoids, including the ideas used in the proofs (whenever the considered results generalize known results), are new in the setting which we consider (an exception is Lemma \bref{lem:gamma-C-is-one-to-one}\beqref{item:C=C(1)} which appears, in a more general formulation, elsewhere, say in \cite[Corollary 14.13, p. 242]{BauschkeCombettes2017book}; however, in the setting that is relevant to us our proof seems to be much simpler). 

Nevertheless, one may wonder what the difference in the proofs between the finite- and infinite-dimensional cases is. Well, it is not always easy to describe this difference. As an illustration, consider Lemma \bref{lem:Ellipsoid}\beqref{item:EllipsoidPolarSet}  in our paper about the polar of a centrally symmetric ellipsoid. In \cite{Rockafellar1970book} one can see the analogous finite-dimensional statement on page 136, and the proof of it is based on \cite[Corollary 15.3.2]{Rockafellar1970book}; however, the proof of this corollary is based on \cite[Corollary 15.3.1]{Rockafellar1970book}, which is based on \cite[Theorem 15.3]{Rockafellar1970book}, which is based on \cite[Theorem 8.6]{Rockafellar1970book}, which is based on \cite[Theorem 8.5]{Rockafellar1970book}, which is based on \cite[Theorem 8.3]{Rockafellar1970book}, which is based on \cite[Theorem 8.2]{Rockafellar1970book}, which is based on \cite[Corollary 6.8.1]{Rockafellar1970book}, which is based on \cite[Theorem 6.8]{Rockafellar1970book}, which is based on \cite[Theorem 6.6]{Rockafellar1970book}, which is based on \cite[Corollary 6.3.1, p. 46]{Rockafellar1970book}, which is finite-dimensional since it says that the closures of two convex subsets $C_1$ and $C_2$ are equal if and only if their relative interiors are equal (an infinite-dimensional counterexample to it can simply be obtained by taking a Hilbert space $C_2$ which contains a dense linear subspace $C_1\neq C_2$; hence $\overline{C_1}=\overline{C_2}=C_2$, but the relative interior of $C_2$ is $C_2$ itself and hence it strictly contains the relative interior of $C_1$). 

In order to be on the safe side, and for the sake of completeness and convenience, we decided to include full proofs of all of the claims that are relevant to our paper; however, because of the nature of these claims and in order to improve the flow of the paper, these proofs are given in an appendix (Section \bref{sec:Appendix}). 

We start with a lemma which describes several properties of the Minkowski functional. 
\begin{lem}\label{lem:gamma-C-is-one-to-one} 
Consider our  real Hilbert space $X$. Then: 
\begin{enumerate}[(a)]
\item\label{item:C=C(1)} Given $C\in \Kbo(X)$, let $C(1):=\{x\in X: \M_{C}(x)\leq 1\}$. Then $C=C(1)$. 
\item For each $x\in X\backslash\{0\}$ and $C\in \Kbo(X)$, we have $(1/\M_C(x))x\in C$.
\item\label{item:gamma_C is one-to-one} For all $C_1, C_2\in \Kbo(X)$, if $\M_{C_1}=\M_{C_2}$, then $C_1=C_2$. 
\item\label{item:gamma_C composition} Suppose that $G:X\to X$ is a continuous and invertible linear operator. Then for an arbitrary $\emptyset\neq C\subseteq X$ one has $\M_{GC}=\M_C\circ G^{-1}$ and $(GC)^{\circ}=(G^*)^{-1}C^{\circ}$. Moreover, if $C\in \Kbo(X)$, then $GC\in \Kbo(X)$ and  $(\M_{GC})^{\circ}=\M_C^{\circ}\circ G^*$. 
\end{enumerate}
\end{lem}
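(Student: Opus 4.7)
The plan is to prove the four parts in order, leveraging part \beqref{item:C=C(1)} to dispatch (b) and \beqref{item:gamma_C is one-to-one} almost for free, and to establish part \beqref{item:gamma_C composition} by direct unfoldings of the defining formulas.

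For part \beqref{item:C=C(1)}, the inclusion $C\subseteq C(1)$ is immediate from \beqref{eq:gamma_C}, since $x\in C$ means $x\in 1\cdot C$ and hence $\M_C(x)\leq 1$. For the reverse inclusion I would split according to whether $\M_C(x)<1$ or $\M_C(x)=1$. In the first case I pick some $\mu\in(\M_C(x),1)$ with $x\in\mu C$, write $x=\mu c$ for some $c\in C$, and observe that $x=\mu c+(1-\mu)\cdot 0$ lies in $C$ by convexity and the fact that $0\in C$. In the second case I pick $\mu_n\downarrow 1$ with $x\in\mu_n C$, so that $x/\mu_n\in C$ and $x/\mu_n\to x$, and then conclude $x\in C$ from closedness of $C$. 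This closedness step is the one genuine obstacle in the whole lemma; without it, a dense proper linear subspace already provides a counterexample.

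Part (b) is immediate from part \beqref{item:C=C(1)} together with positive homogeneity of $\M_C$, which yields $\M_C\bigl((1/\M_C(x))x\bigr)=1\leq 1$, so $(1/\M_C(x))x\in C(1)=C$. Part \beqref{item:gamma_C is one-to-one} is equally immediate: by part \beqref{item:C=C(1)}, each $C_i$ coincides with the sublevel set $\{x:\M_{C_i}(x)\leq 1\}$, a description depending only on $\M_{C_i}$.

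Part \beqref{item:gamma_C composition} is a sequence of four direct unfoldings. For $\M_{GC}=\M_C\circ G^{-1}$, I use that $G$ is a bijection to rewrite $x\in\mu(GC)$ as $G^{-1}x\in\mu C$ and take infima. For $(GC)^\circ=(G^*)^{-1}C^\circ$, I begin from \beqref{eq:C^polar}: $x^*\in(GC)^\circ$ iff $\langle x^*,Gc\rangle\leq 1$ for every $c\in C$, iff $\langle G^*x^*,c\rangle\leq 1$ for every $c\in C$, iff $G^*x^*\in C^\circ$. For $GC\in\Kbo(X)$ under $C\in\Kbo(X)$, I use that $G$ is a linear homeomorphism, so it preserves convexity (by linearity), boundedness (from $\|Gc\|\leq\|G\|\|C\|$), closedness (continuity of $G^{-1}$), and carries any open ball $B(0,r)\subseteq C$ to an open neighborhood of $0$ contained in $GC$. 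Finally, for $(\M_{GC})^\circ=\M_C^\circ\circ G^*$, I combine the first identity with the right-most supremum in \beqref{eq:gamma_C^polar} and perform the substitution $y=G^{-1}x$: the constraint $\M_{GC}(x)=1$ becomes $\M_C(y)=1$, and the objective $\langle x^*,x\rangle$ becomes $\langle G^*x^*,y\rangle$, whose supremum is exactly $\M_C^\circ(G^*x^*)$. Apart from the closedness argument in \beqref{item:C=C(1)}, no step requires anything beyond routine manipulation of definitions.
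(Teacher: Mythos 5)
Your proposal is correct and follows essentially the same route as the paper: the only differences are cosmetic, namely that in part (a) you split the reverse inclusion into the cases $\M_C(x)<1$ (convexity with $0\in C$) and $\M_C(x)=1$ (closedness), whereas the paper handles both at once by extracting a convergent subsequence of $c_k=x/\mu_k$ from a compact segment, and in part (b) you invoke positive homogeneity together with (a) rather than reading the point off from the proof of (a). All remaining steps (the inclusion $C\subseteq C(1)$, part (c), and the four identities in (d) via the substitutions $y=G^{-1}x$ and $y^*=G^*x^*$ plus the open mapping theorem for $GC\in\Kbo(X)$) coincide with the paper's argument.
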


The next lemma presents a certain duality between the polar function and the polar set.  
\begin{lem}\label{lem:(gamma_C)_polar=gamma_(C_polar)}
Let $C\in \Kbo(X)$. Then $C^{\circ}\in \Kbo(X)$ and $\M_C^{\circ}=\M_{C^{\circ}}$. 
\end{lem}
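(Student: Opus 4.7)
My plan splits into two parts: first that $C^{\circ}\in\Kbo(X)$, then the functional identity $\M_C^{\circ}=\M_{C^{\circ}}$.

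For the first part, closedness and convexity of $C^{\circ}$ are immediate from \eqref{eq:C^polar}, since $C^{\circ}$ is an intersection of closed affine half-spaces, and $0\in C^{\circ}$ trivially. To get boundedness, I would exploit the fact that $C$ contains an open ball of radius $r_C$ about the origin: given $0\neq x^*\in C^{\circ}$, the vector $c=r_C\, x^*/\|x^*\|$ belongs to $\overline{C}$ (actually to $C$ by a small perturbation), so the defining inequality $\langle x^*,c\rangle\le 1$ yields $\|x^*\|\le 1/r_C$. Conversely, to see that $0$ lies in the interior of $C^{\circ}$, I would use $\|C\|<\infty$ together with Cauchy--Schwarz: if $\|x^*\|\le 1/\|C\|$ then $\langle x^*,c\rangle\le\|x^*\|\|c\|\le 1$ for every $c\in C$, so the ball of radius $1/\|C\|$ about $0$ sits inside $C^{\circ}$. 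This also recovers the two-sided bounds in \eqref{eq:Inequality gamma_C^polar}.

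For the identity $\M_C^{\circ}=\M_{C^{\circ}}$, my key observation is that both sides are positively homogeneous, finite everywhere and strictly positive off the origin (the left side by the remarks made right after \eqref{eq:gamma_C^polar}, the right side by \eqref{eq:Inequality gamma_C norm} applied to $C^{\circ}\in\Kbo(X)$). Any such function is completely determined by its unit sublevel set, because $f(x)\le\lambda$ is equivalent to $f(x/\lambda)\le 1$ for $\lambda>0$. So it suffices to verify
\begin{equation*}
\{x^*\in X:\M_C^{\circ}(x^*)\le 1\}=C^{\circ}=\{x^*\in X:\M_{C^{\circ}}(x^*)\le 1\},
\end{equation*}
where the right-hand equality is Lemma \bref{lem:gamma-C-is-one-to-one}\beqref{item:C=C(1)} applied to $C^{\circ}\in\Kbo(X)$.

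The remaining containment $\{\M_C^{\circ}\le 1\}=C^{\circ}$ I would obtain from the two-direction argument based on the rightmost expression in \eqref{eq:gamma_C^polar} and Lemma \bref{lem:gamma-C-is-one-to-one}\beqref{item:C=C(1)}: if $\M_C^{\circ}(x^*)\le 1$, then every $c\in C\setminus\{0\}$ can be written as $c=\M_C(c)\cdot(c/\M_C(c))$ with $\M_C(c/\M_C(c))=1$, so $\langle x^*,c\rangle\le\M_C(c)\le 1$, giving $x^*\in C^{\circ}$; conversely, if $x^*\in C^{\circ}$ then any $x$ with $\M_C(x)=1$ lies in $C$ (by Lemma \bref{lem:gamma-C-is-one-to-one}\beqref{item:C=C(1)}), so $\langle x^*,x\rangle\le 1$ and hence $\M_C^{\circ}(x^*)\le 1$ after taking the supremum. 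I do not anticipate a real obstacle here; the only mild delicacy is making sure that $\M_C(c)$ is strictly positive for $c\neq 0$ (so that the scaling $c/\M_C(c)$ makes sense), which is guaranteed by the lower bound in \eqref{eq:Inequality gamma_C norm} once we know $C\in\Kbo(X)$.
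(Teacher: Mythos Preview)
Your argument for $C^{\circ}\in\Kbo(X)$ matches the paper's almost verbatim (intersection of half-spaces for convexity/closedness, Cauchy--Schwarz with $1/\|C\|$ for the interior ball, and the test vector in the direction of $x^*$ scaled by $r_C$ for boundedness).

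For the identity $\M_C^{\circ}=\M_{C^{\circ}}$, however, you take a genuinely different route from the paper. The paper proves the two inequalities $\M_{C^{\circ}}(x^*)\leq\M_C^{\circ}(x^*)$ and $\M_{C^{\circ}}(x^*)\geq\M_C^{\circ}(x^*)$ directly and pointwise: for the first it checks that $x^*/\M_C^{\circ}(x^*)\in C^{\circ}$; for the second it runs an $\epsilon$-argument, picking $\mu^*$ with $x^*\in\mu^* C^{\circ}$, showing $\langle x^*,c/(\M_C(c)+\epsilon)\rangle\leq\mu^*$ for every $c\in C\setminus\{0\}$, and passing to the limit. Your reduction is cleaner: you observe that two positively homogeneous functions which are finite and positive off the origin coincide as soon as their unit sublevel sets coincide, and then you verify $\{\M_C^{\circ}\leq 1\}=C^{\circ}$ (the equality $\{\M_{C^{\circ}}\leq 1\}=C^{\circ}$ being Lemma~\bref{lem:gamma-C-is-one-to-one}\beqref{item:C=C(1)}). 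This packages the content more conceptually and avoids the $\epsilon$-limit step entirely; the price is that one must check the homogeneity/positivity hypotheses up front, but those are already recorded in the preliminaries. Both arguments ultimately rest on the same computation $\langle x^*,c\rangle\leq\M_C(c)\,\M_C^{\circ}(x^*)$, so neither is deeper than the other---yours is just organized around sublevel sets rather than pointwise inequalities.
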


Our next lemma shows a certain relation between conjugacy and polarity. 
\begin{lem}\label{lem:gamma_C_conjugate_polar}
Let $\phi:\R\to(-\infty,\infty]$ and assume that $\phi(t)=\infty$ for every $t\in (-\infty,0)$. Given our real Hilbert space $X$, for each $C\in \Kbo(X)$ and each $x^*\in X$, we have 
\begin{equation}\label{eq:gamma_C_conjugate_polar}
(\phi\circ\M_C)^*(x^*)=\phi^*(\M^{\circ}_C(x^*)). 
\end{equation}
Moreover, if, in addition, we also assume that $\phi$ is finite and differentiable over $[0,\infty)$ (with a right derivative at 0), that $\phi(0)=0$, and that $\phi'$ is strictly increasing on $[0,\infty)$ and maps this interval onto itself, then for all $x^*\in X$ and  $C\in \Kbo(X)$,
\begin{equation}\label{eq:phi(gamma_C)}
\begin{array}{lll}
(\phi\circ \M_C)^*(x^*)&=(\phi')^{-1}(\M_C^{\circ}(x^*))\M_C^{\circ}(x^*)-\phi((\phi')^{-1}(\M_C^{\circ}(x^*)))\\
 & =\displaystyle{\int_0^{\M_C^{\circ}(x^*)}(\phi')^{-1}(t)dt}.
\end{array}
\end{equation}
In particular, for all $x^*\in X$ and $C\in \Kbo(X)$,
\begin{equation}\label{eq:0.5MC^2}
\left(\frac{1}{2}\M_C^2\right)^*(x^*)=\frac{1}{2}(\M_C^{\circ}(x^*))^2.
\end{equation}
\end{lem}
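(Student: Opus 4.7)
The plan is to first establish the general identity \beqref{eq:gamma_C_conjugate_polar} by a ``radial reduction'' of the supremum defining the convex conjugate, and then to specialize to obtain \beqref{eq:phi(gamma_C)} and finally \beqref{eq:0.5MC^2}.

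For the first part, I would group the points $x\in X$ appearing in
\[
(\phi\circ \M_C)^*(x^*)=\sup_{x\in X}\{\langle x^*,x\rangle-\phi(\M_C(x))\}
\]
by the value $s:=\M_C(x)$. From the leftmost inequality in \beqref{eq:Inequality gamma_C norm} one has $\M_C(x)\geq 0$ for every $x\in X$, with $\M_C(x)=0$ only when $x=0$; moreover $\M_C$ is positively homogenous. Hence for each $s>0$ the level set $\{x\in X:\M_C(x)=s\}$ coincides with $s\{y\in X:\M_C(y)=1\}$, while for $s=0$ this level set is $\{0\}$. Combining this with the rightmost expression in \beqref{eq:gamma_C^polar}, the supremum factors as
\[
(\phi\circ \M_C)^*(x^*)=\sup_{s\geq 0}\bigl\{s\M_C^{\circ}(x^*)-\phi(s)\bigr\}.
\]
Since by assumption $\phi\equiv\infty$ on $(-\infty,0)$, removing the constraint $s\geq 0$ does not change the supremum, so the right-hand side equals $\phi^*(\M_C^{\circ}(x^*))$. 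This proves \beqref{eq:gamma_C_conjugate_polar}.

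For the second part, I would compute $\phi^*(t)$ explicitly for $t\geq 0$ under the stronger hypotheses. The map $s\mapsto ts-\phi(s)$ is concave on $[0,\infty)$ with derivative $t-\phi'(s)$, which vanishes at the unique point $s^*(t):=(\phi')^{-1}(t)$ (well-defined and lying in $[0,\infty)$ because $\phi'$ is a strictly increasing surjection of $[0,\infty)$ onto itself, which also forces $\phi'(0)=0$ and the continuity of $\phi'$ and of its inverse). The maximum in $\phi^*(t)$ is therefore attained at $s^*(t)$, giving the first equality in \beqref{eq:phi(gamma_C)} after substituting $t=\M_C^{\circ}(x^*)$. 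For the integral form I would invoke the classical inverse-function identity
\[
\int_0^{a}g(u)\,du+\int_0^{g(a)}g^{-1}(r)\,dr=a\,g(a),\qquad a\geq 0,
\]
valid for any continuous strictly increasing bijection $g:[0,\infty)\to[0,\infty)$ with $g(0)=0$. Applying it with $g=\phi'$ and $a=s^*(t)$, and using $\phi(a)=\int_0^{a}\phi'(u)\,du$ (since $\phi(0)=0$), the identity rearranges to $\phi^*(t)=\int_0^{t}(\phi')^{-1}(r)\,dr$, which is the second equality in \beqref{eq:phi(gamma_C)}. Finally, \beqref{eq:0.5MC^2} follows immediately by choosing $\phi(t)=t^2/2$ on $[0,\infty)$ (extended by $\infty$ on $(-\infty,0)$), so that $(\phi')^{-1}(t)=t$ and consequently $\phi^*(t)=t^2/2$ on $[0,\infty)$.

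The main obstacle here is really just the bookkeeping in the first step, namely the clean separation of the supremum into a radial part (over $s\geq 0$) and an angular part (the unit level set of $\M_C$), together with justifying that only $s\geq 0$ contributes thanks to the extended-valued hypothesis on $\phi$. Once this is in place, the remainder is a standard half-line Legendre-transform computation combined with the classical inverse-function integration identity.
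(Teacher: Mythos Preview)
Your proof is correct and follows essentially the same route as the paper's: the paper also parametrizes $x=\lambda\theta$ with $\lambda\geq 0$ and $\M_C(\theta)=1$, factors the conjugate into $\sup_{\lambda\geq 0}[\lambda\M_C^{\circ}(x^*)-\phi(\lambda)]$, extends to $\lambda\in\R$ via $\phi\equiv\infty$ on $(-\infty,0)$, locates the maximizer by elementary calculus, and then invokes the same Young-type inverse-function integration identity (applied with $\psi=(\phi')^{-1}$ rather than your $g=\phi'$, which is equivalent). The only cosmetic difference is that the paper separates the cases $x^*=0$ and $x^*\neq 0$ explicitly when identifying the maximizer, whereas you absorb this into the concavity argument.
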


The following lemma (which, despite its simplicity, is new to the best of our knowledge) is needed in the proof of Lemma \bref{lem:Ellipsoid} below, mainly in order to show that ellipsoids induced by invertible positive  definite operators are  bounded subsets.  It is possible to prove Lemma \bref{lem:Ellipsoid} using the existence and uniqueness of a positive semidefinite square root of a given positive semidefinite operator, a fact which follows from either the theory of Banach algebras \cite[Theorem 12.33, p. 331]{Rudin1991book} or by more specific considerations related to bounded linear operators in Hilbert spaces \cite[Theorem 9.4-2, p. 476]{Kreyszig1978book} (some caution is needed here because the standard setting in which the above-mentioned fact is proved is a complex Hilbert space; however, at least in the case of \cite[Theorem 9.4-2, p. 476]{Kreyszig1978book} a very slight modification of the proof is needed so that it also holds in a real Hilbert space setting). Instead of using the above-mentioned fact related to square roots of positive semidefinite operators, we  prefer to present an elementary and  purely convex analytic proof, based on Lemma \bref{lem:QuadConj} below (see Subsection \bref{subsec:ProofsSectionTools} below for the proof). One advantage of our proof is that it can be generalized virtually word for word to a more general setting (as done in Remark \bref{rem:LaxMilgram} and Subsection \bref{subsec:LaxMilgramGeneralized} below), while it is not clear to us how to apply the techniques of \cite[Theorem 12.33, p. 331]{Rudin1991book} or \cite[Theorem 9.4-2, p. 476]{Kreyszig1978book} in that more general setting. 

\begin{lem}\label{lem:EllipticOperator}
Given a real Hilbert space $X$, if $A:X\to X$ is a positive semidefinite and invertible linear operator, then $A$ is coercive (in particular, $A$ is positive definite). As a matter of fact, 
\begin{equation}\label{eq:EllipticOperator}
\langle Ax,x\rangle\geq \|A^{-1}\|^{-1}\|x\|^2,\quad \forall\, x\in X  
\end{equation}
and $\|A^{-1}\|^{-1}$ is the optimal (largest possible) coercivity coefficient.  
\end{lem}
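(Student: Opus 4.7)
The plan is to give a purely convex-analytic argument based on the conjugate of the quadratic form $f(x) := \frac{1}{2}\langle Ax,x\rangle$, which is the content of the forthcoming Lemma \textit{QuadConj}: for a positive semidefinite invertible $A$, one has $f^*(y)=\frac{1}{2}\langle A^{-1}y,y\rangle$. (This identity is in fact easy to derive directly by completing the square, using that $A=A^*$ together with the observation that positive semidefinite plus invertible forces $\langle Az,z\rangle=0 \Rightarrow z=0$ via the Cauchy--Schwarz inequality for the semidefinite form $\langle A\cdot,\cdot\rangle$.) Once this conjugacy formula is in hand, the proof of coercivity is short.

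First I would apply the Fenchel--Young inequality $\langle x,ty\rangle \leq f(x)+f^*(ty)$ with an arbitrary scalar $t>0$, and then optimize over $t$. Because both $f$ and $f^*$ are positively $2$-homogeneous, a one-line calculus exercise yields the ``generalized Cauchy--Schwarz'' inequality
\begin{equation*}
\langle x,y\rangle^2 \leq \langle Ax,x\rangle\,\langle A^{-1}y,y\rangle,\qquad \forall\, x,y\in X.
\end{equation*}
Specializing $y=x$ gives $\|x\|^4 \leq \langle Ax,x\rangle\,\langle A^{-1}x,x\rangle$, and then the ordinary Cauchy--Schwarz inequality in $X$ bounds $\langle A^{-1}x,x\rangle \leq \|A^{-1}x\|\,\|x\|\leq \|A^{-1}\|\,\|x\|^2$. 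Combining the two inequalities yields exactly $\langle Ax,x\rangle \geq \|A^{-1}\|^{-1}\|x\|^2$, which establishes \eqref{eq:EllipticOperator} and simultaneously shows that $A$ is coercive (hence positive definite).

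It remains to verify that $\|A^{-1}\|^{-1}$ is the \emph{largest} admissible coercivity coefficient. For this I would unwind the definition of the operator norm: choose a sequence $(y_n)_{n\in\N}$ with $\|y_n\|=1$ and $\|A^{-1}y_n\|\to\|A^{-1}\|$, and set $x_n:=A^{-1}y_n$, so that $Ax_n=y_n$. Then $\langle Ax_n,x_n\rangle=\langle y_n,x_n\rangle \leq \|y_n\|\,\|x_n\|=\|x_n\|$, which gives
\begin{equation*}
\frac{\langle Ax_n,x_n\rangle}{\|x_n\|^2} \leq \frac{1}{\|x_n\|}=\frac{1}{\|A^{-1}y_n\|} \longrightarrow \frac{1}{\|A^{-1}\|}.
\end{equation*}
Since the reverse inequality has already been proved, the Rayleigh-type quotient attains its infimum $\|A^{-1}\|^{-1}$ along $(x_n)$, so no strictly larger constant can serve as a coercivity coefficient.

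The main obstacle is the identification $f^*(y)=\frac{1}{2}\langle A^{-1}y,y\rangle$, because \emph{a priori} one only knows that $A$ is positive semidefinite and invertible, not yet coercive; hence the supremum in the definition of $f^*$ need not be obviously finite. This is precisely why Lemma \textit{QuadConj} is invoked: the completion-of-the-square identity $\langle x,y\rangle-\tfrac12\langle Ax,x\rangle=\tfrac12\langle A^{-1}y,y\rangle-\tfrac12\langle A(x-A^{-1}y),\,x-A^{-1}y\rangle$, combined with nonnegativity of the last term and the fact (from the semidefinite Cauchy--Schwarz inequality) that it vanishes only at $x=A^{-1}y$, pins down both the value and the attainer of $f^*$ without requiring any square root of $A$ or any spectral theory --- and that is the key technical point on which the whole argument rests.
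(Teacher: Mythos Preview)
Your proof is correct and follows essentially the same route as the paper's: both invoke Lemma~\bref{lem:QuadConj} to identify $f^*$, apply the Fenchel--Young inequality with $x^*=\alpha x$ (you write $ty$ and then set $y=x$), bound $\langle A^{-1}x,x\rangle\leq\|A^{-1}\|\|x\|^2$ via the ordinary Cauchy--Schwarz inequality, and optimize over the scalar parameter. The only cosmetic differences are that you record the intermediate ``generalized Cauchy--Schwarz'' inequality $\langle x,y\rangle^2\leq\langle Ax,x\rangle\langle A^{-1}y,y\rangle$ before specializing $y=x$, and for optimality you pass to a maximizing sequence for $\|A^{-1}\|$ whereas the paper argues directly that any coercivity coefficient $\beta$ satisfies $\beta\|A^{-1}y\|\leq\|y\|$ for all $y$; both versions are equivalent.
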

As said above, for proving Lemma \bref{lem:EllipticOperator} we need the following well-known lemma, the proof of which is just a simple calculation (see also Lemma \bref{lem:QuadConjGeneralized} below for a more general statement).
\begin{lem}\label{lem:QuadConj}
Let $X$ be a real Hilbert space and let $A:X\to X$ be a positive semidefinite invertible operator. For each $x\in X$, denote $h(x):=\frac{1}{2}\langle Ax, x\rangle$. Then $h^*(x^*)=\frac{1}{2}\langle A^{-1}x^*,x^*\rangle$ for all $x^*\in X$.
\end{lem}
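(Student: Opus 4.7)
The plan is to compute $h^*(x^*)$ directly from the definition of the Legendre--Fenchel transform by a completion-of-the-square argument adapted to the operator setting. Fix $x^*\in X$. Since $A$ is self-adjoint, one has $\langle x^*,x\rangle=\langle A(A^{-1}x^*),x\rangle=\langle A^{-1}x^*,Ax\rangle$ for every $x\in X$, where invertibility of $A$ ensures that $A^{-1}x^*$ makes sense; this suggests the substitution $z:=x-A^{-1}x^*$, which ranges over all of $X$ as $x$ does.

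Next I will expand $\langle Az,z\rangle$ using self-adjointness a second time and collect terms to obtain the identity
\begin{equation*}
\langle Az,z\rangle=\langle Ax,x\rangle-2\langle x^*,x\rangle+\langle A^{-1}x^*,x^*\rangle,
\end{equation*}
which rearranges to
\begin{equation*}
\langle x^*,x\rangle-\tfrac{1}{2}\langle Ax,x\rangle=\tfrac{1}{2}\langle A^{-1}x^*,x^*\rangle-\tfrac{1}{2}\langle Az,z\rangle.
\end{equation*}
Because $A$ is positive semidefinite, $\langle Az,z\rangle\geq 0$ for every $z\in X$, so the right-hand side is bounded above by $\tfrac{1}{2}\langle A^{-1}x^*,x^*\rangle$, and this bound is attained at $z=0$, that is, at the choice $x=A^{-1}x^*$. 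Taking the supremum over $x\in X$ in the definition \beqref{eq:f^*} of $h^*$ then yields $h^*(x^*)=\tfrac{1}{2}\langle A^{-1}x^*,x^*\rangle$, which is the desired formula.

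I do not anticipate a genuine obstacle in this argument; the only mild point of care is to avoid any appeal to a positive semidefinite square root of $A$, since this lemma is used precisely to prove Lemma \bref{lem:EllipticOperator}, which in turn is cited in the paper as an alternative to such a square-root construction. The completion-of-squares proof above relies only on the self-adjointness, invertibility and positive semidefiniteness of $A$, together with bilinearity of $\langle\cdot,\cdot\rangle$, and is therefore safely elementary; no hypothesis beyond those stated in the lemma is needed.
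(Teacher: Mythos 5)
Your proof is correct and is essentially the same as the paper's: the paper's argument (given explicitly for the Banach-space generalization, Lemma \bref{lem:QuadConjGeneralized}) also completes the square around $x_m:=A^{-1}x^*$, writing $\langle x^*,x\rangle-h(x)=k(x_m)-\frac{1}{2}\langle A(x-x_m),x-x_m\rangle$ and invoking positive semidefiniteness to identify the supremum. Your identity for $\langle Az,z\rangle$ with $z=x-A^{-1}x^*$ is exactly this computation, and your care to avoid a square root of $A$ matches the paper's stated intent.
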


\begin{remark}\label{rem:Coercive}
Lemma \bref{lem:EllipticOperator} is closely related to the celebrated Lax-Milgram theorem from partial differential equations \cite[Corollary 5.8, p. 140]{Brezis2011book}, \cite[Theorem 2.1, p. 169]{LaxMilgram1954incol}. This theorem essentially says that given a continuous bilinear form $B:X^2\to\R$, if $B$ is coercive, then its generating operator $A$ (that is, the continuous linear operator $A:X\to X$ satisfying $B(x,y)=\langle Ax,y\rangle$ for each $x,y\in X$) is invertible. Even without the Lax-Milgram theorem it is immediate that if $B$ is symmetric (namely $B(x,y)=B(y,x)$ for all $x,y\in X$), then $A=A^*$, and if $B$ is also coercive, then $A$ is positive definite. Hence if we restrict attention to the common case of symmetric, continuous and coercive bilinear forms $B$, then we can conclude from the Lax-Milgram theorem that $A$ is invertible and positive definite. Lemma \bref{lem:EllipticOperator} implies essentially a quantitative converse: if $A$ is positive definite (or merely positive semidefinite) and invertible, then $B$ is coercive, and, moreover, the best possible coercivity coefficient of $B$ is $1/\|A^{-1}\|$. Since coercive bilinear forms have applications in other areas, such as calculus of variations (for example,  Stampacchia's theorem \cite[Theorem 5.6, pp. 138, 145]{Brezis2011book}, \cite[Th\'eor\`eme 1]{Stampacchia1964jour}), Lemma \bref{lem:EllipticOperator} may find applications in these areas too. We also note that Lemma ~\bref{lem:EllipticOperator} generalizes \cite[Remark 15.3]{IusemReemReich2019jour}. 
\end{remark}

The following lemma discusses a few properties of ellipsoids. 
\begin{lem}\label{lem:Ellipsoid}
Given a positive definite and invertible linear operator $A:X\to X$ where $X$ is a real Hilbert space, let $D:=\{x\in X: \langle Ax,x\rangle\leq 1\}$ be the centrally symmetric ellipsoid induced by $A$. The following statements hold:
\begin{enumerate}[(a)]
\item\label{item:EllipsoidIsBounded} $D\in \Kbo(X)$. 
\item\label{item:EllipsoidPolarFunction} $\M_{D}(x)=\sqrt{\langle Ax,x\rangle}$ for each $x\in X$. 
\item\label{item:EllipsoidPolarSet} $D^{\circ}=\{x\in X: \langle A^{-1}x,x\rangle\leq 1\}$.
\end{enumerate}
\end{lem}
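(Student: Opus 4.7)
The plan is to prove parts \beqref{item:EllipsoidIsBounded}--\beqref{item:EllipsoidPolarSet} in order, each one feeding the next. The main obstacles will be establishing boundedness of $D$ in Part \beqref{item:EllipsoidIsBounded} and identifying $D^\circ$ cleanly in Part \beqref{item:EllipsoidPolarSet}; Part \beqref{item:EllipsoidPolarFunction} should be a routine unfolding of the definition of the Minkowski functional.

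For Part \beqref{item:EllipsoidIsBounded}, I would set $q(x):=\langle Ax,x\rangle$ and observe that $q$ is continuous (because $A$ is bounded) and convex (because positive semidefiniteness of $A$ makes $q$ a convex quadratic form, via the standard expansion of $q(\lambda x+(1-\lambda)y)$). Hence $D=q^{-1}((-\infty,1])$ is closed and convex. The origin lies in $\Int(D)$ because Cauchy-Schwarz gives $q(x)\leq \|A\|\|x\|^2$, so the open ball of radius $1/\sqrt{\|A\|}$ around $0$ sits inside $D$. Boundedness is the substantive step: I would invoke Lemma \bref{lem:EllipticOperator}, which provides $q(x)\geq \|A^{-1}\|^{-1}\|x\|^2$, and hence $\|x\|\leq \sqrt{\|A^{-1}\|}$ for every $x\in D$.

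Part \beqref{item:EllipsoidPolarFunction} is immediate from \beqref{eq:gamma_C}: for $x\neq 0$ and $\mu>0$, the condition $x\in \mu D$ is equivalent to $\mu^2\geq q(x)$, so taking the infimum yields $\M_D(x)=\sqrt{q(x)}$; the case $x=0$ is trivial since both sides vanish.

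For Part \beqref{item:EllipsoidPolarSet}, the cleanest route exploits the Minkowski-functional machinery already in place. First note that $A^{-1}$ is itself positive definite: setting $y=A^{-1}x$ gives $\langle A^{-1}x,x\rangle=\langle Ay,y\rangle$, which is positive whenever $x\neq 0$. By Lemma \bref{lem:(gamma_C)_polar=gamma_(C_polar)}, $D^\circ\in \Kbo(X)$ and $\M_{D^\circ}=\M_D^{\circ}$. Combining \beqref{eq:0.5MC^2} from Lemma \bref{lem:gamma_C_conjugate_polar}, Part \beqref{item:EllipsoidPolarFunction}, and Lemma \bref{lem:QuadConj} applied to $A$, I would obtain
\[
\tfrac{1}{2}(\M_{D^\circ}(x^*))^2=\bigl(\tfrac{1}{2}\M_D^2\bigr)^*(x^*)=\bigl(\tfrac{1}{2}\langle A\cdot,\cdot\rangle\bigr)^*(x^*)=\tfrac{1}{2}\langle A^{-1}x^*,x^*\rangle,
\]
so $\M_{D^\circ}(x^*)=\sqrt{\langle A^{-1}x^*,x^*\rangle}$. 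Applying Lemma \bref{lem:gamma-C-is-one-to-one}\beqref{item:C=C(1)} to $D^\circ$ then recovers $D^\circ=\{x^*\in X:\langle A^{-1}x^*,x^*\rangle\leq 1\}$, completing the proof.
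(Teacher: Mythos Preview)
Your proposal is correct and follows essentially the same approach as the paper: Lemma \bref{lem:EllipticOperator} for boundedness in Part \beqref{item:EllipsoidIsBounded}, a direct infimum computation in Part \beqref{item:EllipsoidPolarFunction}, and the combination of Lemmas \bref{lem:(gamma_C)_polar=gamma_(C_polar)}, \bref{lem:gamma_C_conjugate_polar} (equation \beqref{eq:0.5MC^2}) and \bref{lem:QuadConj} in Part \beqref{item:EllipsoidPolarSet}. The only cosmetic difference is in the last step of Part \beqref{item:EllipsoidPolarSet}: you invoke Lemma \bref{lem:gamma-C-is-one-to-one}\beqref{item:C=C(1)} directly on $D^{\circ}$, whereas the paper introduces $\wt{D}:=\{x^*:\langle A^{-1}x^*,x^*\rangle\leq 1\}$, applies Part \beqref{item:EllipsoidPolarFunction} to it, and then uses the injectivity statement Lemma \bref{lem:gamma-C-is-one-to-one}\beqref{item:gamma_C is one-to-one}; your shortcut is valid and slightly more economical.
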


 We note that the assumption that $A^{-1}$ exists is crucial for Lemma \bref{lem:Ellipsoid}. For instance, as a counterexample for Part  \beqref{item:EllipsoidIsBounded}, one can to take $X:=\ell_2$ and $A$ to be any diagonal operator the entries of which are $\lambda_k$ so that $\lambda_k>0$ for every $k\in \N$ and $\lim_{k\to\infty}\lambda_k=0$. This $A$ is positive definite but not invertible and each of the vectors $x_k:=(1/\sqrt{\lambda_k})e_k$ belongs to  the centrally symmetric ellipsoid $D:=\{x\in X: \langle Ax,x\rangle\leq 1\}$ induced by $A$, that is, $D$ is not bounded (here $e_k$ is the $k$-th element of the canonical basis of $\ell_2$, namely the $k$-th component of $e_k$ is 1 and the other components are zero).

The next ``convex analytic lemma'' is somewhat known in the sense that versions of it have been mentioned in the literature in more restricted settings (see, for instance, \cite[Example 5, p. 349]{CensorReem2015jour} and \cite[p. 288]{Eppstein2005incol}). 

\begin{lem}\label{lem:UpperBound}
Let $X$ be a real normed space and assume that $f:X\to(-\infty,\infty]$ is convex. If $f$ is bounded above by some $\alpha\in\R$, then $f$ is identically equal  to some real constant.  
\end{lem}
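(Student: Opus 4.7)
The plan is to reduce the statement to the one-dimensional case by restricting $f$ to affine lines joining arbitrary pairs of points, and then to show that a convex function of one real variable which is bounded above on all of $\R$ must be constant. First I would note that because $f$ takes values in $(-\infty,\infty]$ and is bounded above by a real $\alpha$, the inequality $f(x)\leq\alpha<\infty$ forces $f$ to be everywhere finite-valued, so it suffices to prove $f(x)=f(y)$ for all $x,y\in X$.

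Fixing such $x,y$, I would define $g:\R\to\R$ by $g(t):=f(x+t(y-x))$. Since the map $t\mapsto x+t(y-x)$ is affine and $f$ is convex, $g$ is convex on $\R$, and the upper bound $g(t)\leq\alpha$ is inherited from $f$. The heart of the argument is then the one-dimensional claim that any convex $g:\R\to\R$ bounded above must be constant.

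To prove this claim I would argue by contradiction: suppose $g(t_1)\neq g(t_2)$ for some $t_1<t_2$, and without loss of generality $g(t_1)<g(t_2)$. For any $t>t_2$, write $t_2$ as the convex combination $t_2=\tfrac{t-t_2}{t-t_1}\,t_1+\tfrac{t_2-t_1}{t-t_1}\,t$ and apply convexity to $g(t_2)$; after rearrangement this yields the lower estimate
\[
g(t)\;\geq\;g(t_1)+\frac{t-t_1}{t_2-t_1}\bigl(g(t_2)-g(t_1)\bigr),
\]
whose right-hand side tends to $+\infty$ as $t\to\infty$ since $g(t_2)-g(t_1)>0$, contradicting the upper bound $g\leq\alpha$. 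The symmetric case $g(t_1)>g(t_2)$ is handled identically by letting $t\to-\infty$ and writing $t_1$ as a convex combination of $t$ and $t_2$. Hence $g$ is constant on $\R$, so $f(x)=g(0)=g(1)=f(y)$, and since $x,y\in X$ were arbitrary, $f$ is a real constant.

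I do not anticipate any serious obstacle: the argument is essentially the classical secant-slope estimate for convex functions. The only minor care points are (i) verifying up front that the hypotheses on the codomain together with the upper bound rule out both $+\infty$ and $-\infty$ values, so that the conclusion ``real constant'' makes sense, and (ii) keeping the convex-combination coefficients correct when rearranging the inequality in the display above.
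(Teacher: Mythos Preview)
Your proof is correct and takes a genuinely different route from the paper's. The paper argues by contradiction using heavier convex-analysis machinery: since $f$ is proper, convex, and globally bounded above, it invokes local Lipschitz continuity (hence lower semicontinuity) and then the representation of $f$ as a pointwise supremum of affine minorants; if $f$ were non-constant, one such minorant $x\mapsto a(x)+\gamma$ would have $a\neq 0$, and evaluating along a ray $tx_0$ with $a(x_0)>0$ forces $f(tx_0)\to\infty$, contradicting the bound. Your argument is more elementary and self-contained: you restrict to a line through two arbitrary points, obtain a one-variable convex function bounded above, and derive a contradiction directly from the secant-slope inequality without appealing to continuity theorems or the Fenchel--Moreau-type envelope representation. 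The payoff of your approach is that it needs no external citations and works in any real vector space; the paper's approach, while less economical here, situates the lemma within standard convex-analytic tools that are already in play elsewhere in the paper.
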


 The following proposition follows immediately from \beqref{eq:FixedPointConvexGeometry} and the fact that the polar of a set is closed, convex and contains the origin. 
\begin{prop}\label{prop:ClosedConvex0}
If $C$ solves \beqref{eq:FixedPointConvexGeometry}, then $C$ is closed, convex and $0\in C$.
\end{prop}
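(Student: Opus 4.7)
The plan is to deduce all three properties of $C$ directly from the defining equation $C=(GC)^{\circ}$, using only the definition of polarity given in \beqref{eq:C^polar}. Since $C\neq\emptyset$ and $G$ is a function, $GC$ is a well-defined nonempty subset of $X$, so $(GC)^{\circ}$ is meaningful.

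First I would handle the origin: for every $c\in C$, $\langle 0,Gc\rangle=0\leq 1$, so by \beqref{eq:C^polar} the zero vector lies in $(GC)^{\circ}$, and hence in $C$ by \beqref{eq:FixedPointConvexGeometry}. Next, for closedness and convexity I would rewrite the polar as an intersection of closed half-spaces (or full copies of $X$, when $Gc=0$):
\begin{equation*}
(GC)^{\circ}=\bigcap_{c\in C}\{x^*\in X:\langle x^*,Gc\rangle\leq 1\}.
\end{equation*}
Each set in this intersection is closed and convex (as the preimage of $(-\infty,1]$ under a continuous linear functional), so the intersection is closed and convex, and therefore so is $C$.

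Since the argument uses only the definition \beqref{eq:C^polar} and the continuity/linearity of $x^*\mapsto\langle x^*,Gc\rangle$, no invertibility, positive-definiteness, or other properties of $G$ are required. There is essentially no obstacle: the whole proposition is a direct consequence of the general fact that polar sets are always closed, convex, and contain the origin, combined with the fixed point equation.
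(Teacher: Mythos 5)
Your argument is correct and is essentially the same as the paper's: the paper also derives the proposition immediately from \beqref{eq:FixedPointConvexGeometry} together with the standard fact that the polar of any nonempty set is closed, convex, and contains the origin. You merely spell out the half-space intersection explicitly, which the paper leaves implicit.
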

We finish this section by describing a connection between \beqref{eq:FixedPointConvexGeometry} and \beqref{eq:f_Tf}. 
\begin{lem}\label{lem:f=f^*G*}
If $C\in \Kbo(X)$ solves \beqref{eq:FixedPointConvexGeometry}, then $f:=\frac{1}{2}\M_C^2$ satisfies the equation 
\begin{equation}\label{eq:f=f^*G*}
f(x)=f^*(G^* x),\quad x\in X.
\end{equation}
\end{lem}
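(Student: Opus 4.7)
The plan is to reduce the set equation $C=(GC)^{\circ}$ to a pointwise identity between Minkowski functionals, then apply the conjugacy formula (\ref{eq:0.5MC^2}) to obtain the desired identity between $f$ and $f^{*}\circ G^{*}$.

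First I would take Minkowski functionals on both sides of $C=(GC)^{\circ}$. By Lemma \ref{lem:(gamma_C)_polar=gamma_(C_polar)} (applied to the set $GC$, which lies in $\Kbo(X)$ by Lemma \ref{lem:gamma-C-is-one-to-one}\beqref{item:gamma_C composition}), we have $\M_{(GC)^{\circ}}=\M_{GC}^{\circ}$, and therefore
\begin{equation*}
\M_{C}=\M_{GC}^{\circ}.
\end{equation*}
Next, by the second part of Lemma \ref{lem:gamma-C-is-one-to-one}\beqref{item:gamma_C composition}, one has $\M_{GC}^{\circ}=\M_{C}^{\circ}\circ G^{*}$, so that $\M_{C}(x)=\M_{C}^{\circ}(G^{*}x)$ for every $x\in X$. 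Squaring and dividing by two gives
\begin{equation*}
f(x)=\tfrac{1}{2}\M_{C}(x)^{2}=\tfrac{1}{2}\bigl(\M_{C}^{\circ}(G^{*}x)\bigr)^{2},\qquad x\in X.
\end{equation*}

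Finally I would invoke equation \beqref{eq:0.5MC^2} of Lemma \bref{lem:gamma_C_conjugate_polar}, which asserts that $(\tfrac{1}{2}\M_{C}^{2})^{*}(y)=\tfrac{1}{2}(\M_{C}^{\circ}(y))^{2}$ for every $y\in X$, i.e.\ $f^{*}(y)=\tfrac{1}{2}(\M_{C}^{\circ}(y))^{2}$. Substituting $y=G^{*}x$ yields
\begin{equation*}
f^{*}(G^{*}x)=\tfrac{1}{2}\bigl(\M_{C}^{\circ}(G^{*}x)\bigr)^{2}=f(x),
\end{equation*}
which is precisely \beqref{eq:f=f^*G*}.

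There is no real obstacle here beyond recognizing which of the preparatory lemmas to chain together; the argument is a short bookkeeping computation. The one point meriting a momentary check is that all quantities involved are genuinely finite, but this is automatic from $C\in\Kbo(X)$ via the inequalities \beqref{eq:Inequality gamma_C norm} and \beqref{eq:Inequality gamma_C^polar}, which guarantee that $\M_{C}$ and $\M_{C}^{\circ}$ are finite everywhere so that all squarings and conjugate evaluations make sense.
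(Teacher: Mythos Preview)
Your proof is correct and follows essentially the same approach as the paper's own argument: both establish $\M_C=\M_{GC}^{\circ}=\M_C^{\circ}\circ G^*$ via Lemmas \bref{lem:(gamma_C)_polar=gamma_(C_polar)} and \bref{lem:gamma-C-is-one-to-one}\beqref{item:gamma_C composition}, then square and invoke \beqref{eq:0.5MC^2} to identify $\tfrac{1}{2}(\M_C^{\circ})^2$ with $f^*$. The chain of lemmas and the order in which you apply them match the paper's proof almost step for step.
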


\section{$G$ is positive definite: existence and uniqueness}\label{sec:GisPositiveDefinite} 
Here we show that \beqref{eq:FixedPointConvexGeometry} has a unique solution when $G$ is positive definite. 

\begin{prop}\label{prop:PositiveDefiniteG}
If $G:X\to X$ is a positive definite and invertible linear operator, then \beqref{eq:FixedPointConvexGeometry} has a unique solution $\emptyset\neq C\subseteq X$. The solution is the ellipsoid $C=\{x\in X: \langle Gx,x\rangle\leq 1\}$.
\end{prop}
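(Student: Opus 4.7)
The plan is to denote the candidate solution by $D := \{x \in X : \langle Gx, x\rangle \leq 1\}$ and proceed in two steps: existence, then uniqueness. A key preliminary observation is that since $G$ is self-adjoint, $G^* = G$, so Lemma \bref{lem:gamma-C-is-one-to-one}\beqref{item:gamma_C composition} lets me rewrite $(GC)^{\circ} = G^{-1}C^{\circ}$; thus \beqref{eq:FixedPointConvexGeometry} is equivalent to the much more tractable identity $GC = C^{\circ}$. I will work at this set-theoretic level throughout, so that neither Lemma \bref{lem:f=f^*G*} nor any Minkowski functional computation is needed.

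For existence, I will verify directly that $GD = D^{\circ}$. Lemma \bref{lem:Ellipsoid}\beqref{item:EllipsoidPolarSet} gives $D^{\circ} = \{x \in X : \langle G^{-1}x, x\rangle \leq 1\}$, and the change of variables $y = Gx$ in the defining inequality of $D$ yields $GD = \{y \in X : \langle y, G^{-1}y\rangle \leq 1\}$, which coincides with $D^{\circ}$ by symmetry of the inner product. Hence $D$ solves \beqref{eq:FixedPointConvexGeometry}.

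For uniqueness, let $C$ be an arbitrary nonempty solution. Proposition \bref{prop:ClosedConvex0} ensures that $C$ is closed, convex, and contains $0$. For any $x \in C$, we have $Gx \in GC = C^{\circ}$, so $\langle Gx, c\rangle \leq 1$ for every $c \in C$; specializing to $c = x$ gives $\langle Gx, x\rangle \leq 1$, i.e., $x \in D$, and hence $C \subseteq D$. This single inclusion has two dual consequences: the order-reversing property of polarity yields $D^{\circ} \subseteq C^{\circ}$, while applying $G$ to both sides and invoking the existence step gives $C^{\circ} = GC \subseteq GD = D^{\circ}$. Combining these, $C^{\circ} = D^{\circ}$. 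The bipolar theorem, applied to the closed convex sets $C$ and $D$ (each containing $0$), then yields $C = C^{\circ\circ} = D^{\circ\circ} = D$.

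The entire argument is essentially algebraic once the equivalent form $GC = C^{\circ}$ is in hand, so I do not anticipate a substantial obstacle. The only delicate point is the use of the bipolar theorem in a possibly infinite-dimensional Hilbert space; this is a classical consequence of Hahn-Banach separation and applies cleanly here because both $C$ and $D$ are closed, convex, and contain the origin.
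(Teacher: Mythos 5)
Your proof is correct and follows essentially the same route as the paper: the same candidate ellipsoid $D=\{x\in X:\langle Gx,x\rangle\leq 1\}$, the same verification of existence via Lemma \bref{lem:Ellipsoid}\beqref{item:EllipsoidPolarSet} and the change of variables $y=Gx$, and the same derivation of $C\subseteq D$ by testing the defining inequality of $(GC)^{\circ}$ at $c=x$. The only divergence is the endgame of the uniqueness step: the paper avoids the bipolar theorem entirely by noting that $C\subseteq D$ gives $(GC)^{\circ}\supseteq (GD)^{\circ}$, which reads $C\supseteq D$ precisely because both $C$ and $D$ are fixed points of \beqref{eq:FixedPointConvexGeometry}; your detour through $C^{\circ}=D^{\circ}$ and $C=C^{\circ\circ}$ is also valid (both sets are closed, convex and contain $0$, so bipolarity applies), but it imports a separation-theoretic result that the paper's argument never needs.
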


\begin{proof}
Let $D:=\{x\in X: \langle Gx,x\rangle\leq 1\}$. We first show that if $C=D$, then $C$ solves \beqref{eq:FixedPointConvexGeometry}. Indeed, since $C=D$ and $GC=\{Gx: x\in X,\,\langle Gx,x\rangle\leq 1\}$, the invertibility of $G$ and the change of variables $y=Gx$ show that $GC=\{y\in X: \,\langle y,G^{-1}y\rangle\leq 1\}$. By using this identity, the fact that the inner product is symmetric and Lemma \bref{lem:Ellipsoid}\beqref{item:EllipsoidPolarSet}, we see that $(GC)^{\circ}=\{x\in X: \,\langle Gx,x\rangle\leq 1\}$, namely $(GC)^{\circ}=C$, as required. 

Now we show that if $C$ solves \beqref{eq:FixedPointConvexGeometry}, then it must be that $C=D$. Let $x\in C$ be arbitrary. From \beqref{eq:FixedPointConvexGeometry} we have $x\in (GC)^{\circ}$. Hence $\langle x,Gc\rangle\leq 1$ for all $c\in C$ and, in particular, for $c=x$. It follows that $x\in D$, namely $C\subseteq D$. Therefore $GC\subseteq GD$, and since the polarity operation reverses the order, we get $(GC)^{\circ}\supseteq (GD)^{\circ}$. Since we assume that $C$ solves \beqref{eq:FixedPointConvexGeometry} and since we already know from the previous paragraph that $D$ solves \beqref{eq:FixedPointConvexGeometry}, it follows that $C\supseteq D$. We conclude that $C=D$, as required. 
\end{proof}

Another existence and uniqueness result, in a somewhat restricted setting, is described in Proposition \bref{prop:BallUniqueEllipsoid} below. 

\section{The one-dimensional case}\label{sec:X=R}
In this section we classify completely the set of solutions to \beqref{eq:FixedPointConvexGeometry} when $X=\R$. We note that since $G$ is linear and invertible, its form must be $G(x)=\gamma x$ for every $x\in X$, where $\gamma$ is a fixed positive or negative real number. 
\begin{prop}\label{prop:X=R}
 Suppose that $X=\R$ and that $G:X\to X$ is linear and invertible. Then the following statements hold:
\begin{enumerate}[(a)]
\item If $G$ has the form $G(x)=\gamma x$ for each $x\in X$, where $\gamma>0$ is fixed, then $C=\{x\in \R: |x|\leq 1/\sqrt{\gamma}\}$ is the unique solution to \beqref{eq:FixedPointConvexGeometry}.
\item If $G$ has the form $G(x)=\gamma x$ for each $x\in X$, where $\gamma<0$ is fixed, then the set of solutions to \beqref{eq:FixedPointConvexGeometry} consists of the following sets: 
\begin{enumerate}[(i)]
\item The sets $C_b:=[\frac{1}{\gamma b},b]$, where $b>0$ is arbitrary (namely, infinitely many  sets which belong to $\Kbo(X)$), and  
\item two unbounded sets which contain 0 on their boundaries, namely the rays $C_{-}:=(-\infty,0]$ and $C_{+}:=[0,\infty)$. 
\end{enumerate}
\end{enumerate}
\end{prop}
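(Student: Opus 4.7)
The plan is to dispose of Part (a) in one line by invoking Proposition~\bref{prop:PositiveDefiniteG}, and to settle Part (b) by an exhaustive case analysis guided by Proposition~\bref{prop:ClosedConvex0}.

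For Part (a), when $\gamma>0$ the operator $G(x)=\gamma x$ is positive definite and invertible on $X=\R$, so Proposition~\bref{prop:PositiveDefiniteG} applies directly and yields the unique solution $\{x\in\R:\gamma x^2\leq 1\}=[-1/\sqrt{\gamma},1/\sqrt{\gamma}]$.

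For Part (b), the governing observation is that Proposition~\bref{prop:ClosedConvex0} forces any solution $C$ to be a closed convex subset of $\R$ containing $0$, hence a (possibly unbounded or degenerate) interval containing $0$. I would enumerate the possibilities as: (i) $C=\{0\}$; (ii) $C=\R$; (iii) $C=(-\infty,b]$ with $b\geq 0$; (iv) $C=[a,\infty)$ with $a\leq 0$; (v) $C=[a,b]$ with $a\leq 0\leq b$ and $(a,b)\neq(0,0)$. For each case I would compute $GC$ (bearing in mind that $\gamma<0$ reverses orientation) and then $(GC)^{\circ}$ straight from the definition~\beqref{eq:C^polar}, by splitting according to the sign of $x$ and identifying $\sup_{y\in GC}xy$.

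Cases (i) and (ii) are ruled out instantly, since $(\gamma\{0\})^{\circ}=\R\neq\{0\}$ and $(\gamma\R)^{\circ}=\{0\}\neq\R$. In case (iii), $GC=[\gamma b,\infty)$; for $x>0$ one has $\sup_{y\geq\gamma b}xy=+\infty$, so no positive $x$ lies in $(GC)^{\circ}$, and for $x\leq 0$ the supremum equals $x\gamma b$. If $b>0$ this forces $(GC)^{\circ}=[1/(\gamma b),0]$, a bounded set which cannot equal the unbounded $C$; if $b=0$, then $(GC)^{\circ}=(-\infty,0]=C$, yielding $C_{-}$. Case (iv) is symmetric and produces $C_{+}$. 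In case (v), the degenerate subcases $a=0$ or $b=0$ again make $GC$ a half-line, whose polar is bounded and thus cannot match a $C$ that is unbounded in the opposite direction; the surviving subcase $a<0<b$ gives the bounded interval $GC=[\gamma b,\gamma a]$ with $\gamma b<0<\gamma a$, and a direct sign analysis yields $(GC)^{\circ}=[1/(\gamma b),1/(\gamma a)]$. Matching endpoints reduces to the single relation $\gamma ab=1$, which parametrizes precisely the family $C_b=[1/(\gamma b),b]$ for $b>0$.

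The main technical care is in the polar computations when $GC$ is unbounded on one side: one has to identify correctly when $\sup_{y\in GC}xy$ is $+\infty$ versus finite, which is where the orientation reversal forced by the negative $\gamma$ enters. Beyond this bookkeeping, every subcase reduces to elementary arithmetic, so I do not anticipate a substantive obstacle.
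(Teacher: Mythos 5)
Your proposal is correct and follows essentially the same route as the paper: Part (a) by direct appeal to Proposition~\bref{prop:PositiveDefiniteG}, and Part (b) by using Proposition~\bref{prop:ClosedConvex0} to reduce to intervals $[a,b]$ with $-\infty\leq a\leq 0\leq b\leq\infty$ and then checking each configuration by computing $(GC)^{\circ}$ from \beqref{eq:C^polar}; your five cases cover the same ground as the paper's nine. One small slip: in the degenerate subcases $a=0<b<\infty$ and $-\infty<a<0=b$ of your case (v) you have the roles reversed --- there $GC=[\gamma b,0]$ (resp.\ $[0,\gamma a]$) is a \emph{bounded} interval with $0$ as an endpoint, and its polar is the \emph{unbounded} half-line $[1/(\gamma b),\infty)$ (resp.\ $(-\infty,1/(\gamma a)]$), so the correct reason these subcases fail is that $(GC)^{\circ}$ is unbounded while $C$ is bounded, not the other way around; the conclusion is unaffected.
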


\begin{proof}
If $\gamma>0$, then $G$ is positive definite. Hence Proposition \bref{prop:PositiveDefiniteG} ensures that the unique solution to   \beqref{eq:FixedPointConvexGeometry} is $C=\{x\in \R: G(x)\cdot x\leq 1\}$, namely $C=\{x\in \R: |x|\leq 1/\sqrt{\gamma}\}$, as claimed. 

From now one we assume that $\gamma<0$. Proposition \bref{prop:ClosedConvex0} ensures that any solution $C$ to \beqref{eq:FixedPointConvexGeometry} is convex, closed and contains 0. Since $X=\R$, this fact implies that $C$ has the form $C=[a,b]$, where $-\infty\leq a\leq 0\leq b\leq \infty$ (here we use the notation $[-\infty,b]:=(-\infty,b]$ where $b\in \R$, and so forth). Hence is it sufficient to  consider the following 9 cases according to the values of the parameters $a$ and $b$, and to check directly in each  case  whether $C$ solves \beqref{eq:FixedPointConvexGeometry}: 
\begin{enumerate}[(I)]
\item $-\infty=a<0=b<\infty$
\item\label{case:C=(-infty,0]} $-\infty=a<0<b<\infty$
\item $-\infty=a<0<b=\infty$
\item $-\infty<a<0=b<\infty$
\item\label{case:C=[a,b]} $-\infty<a<0<b<\infty$
\item $-\infty<a<0<b=\infty$
\item $-\infty<a=0=b<\infty$
\item\label{case:C=[0,b]} $-\infty<a=0<b<\infty$
\item\label{case:C=[0,infty]} $-\infty<a=0<b=\infty$
\end{enumerate}
Direct calculations show that the $C$ from Cases \beqref{case:C=(-infty,0]}, \beqref{case:C=[a,b]},  and \beqref{case:C=[0,infty]} does solve \beqref{eq:FixedPointConvexGeometry}, and $C$ from the other cases does not solve \beqref{eq:FixedPointConvexGeometry}. For instance, consider Case \beqref{case:C=[a,b]}. Since $\gamma<0$, we see that in this case 
\begin{multline*}
(GC)^{\circ}=\{x^*\in\R: x^*\cdot G(c)\leq 1,\,\forall c\in C\}=\{x^*\in\R: x^*\cdot c\geq 1/\gamma,\,\forall c\in [a,b]\}\\
=\{x^*\in\R: x^*\leq 1/(\gamma c):\,\,c\in [a,0)]\}\bigcap \{x^*\in\R: x^*\geq 1/(\gamma c):\,\,c\in (0,b]\}\\
\bigcap\{x^*\in\R: x^*\cdot 0\geq 1/\gamma\}\\
=\left(-\infty,\frac{1}{\gamma a}\right]\cap \left[\frac{1}{\gamma b},\infty\right)\cap \R=\left[\frac{1}{\gamma b},\frac{1}{\gamma a}\right].
\end{multline*}
Thus the equality $C=(GC)^{\circ}$ is possible if and only if $a=1/(\gamma b)$ and $b=1/(\gamma a)$, namely we can take $b>0$ to be arbitrary and then $a=1/(\gamma b)$, as claimed. As another example, consider Case \beqref{case:C=[0,b]}. Then $(GC)^{\circ}=\{x^*\in \R: x^* \cdot\gamma c\leq 1,\,\forall c\in C\}=\{x^*\in \R: x^*\geq 1/(\gamma c): c\in (0,b]\}\cap \{x^*\in\R: x^*\cdot 0\geq 1/\gamma\}=[1/(\gamma b),\infty)\cap \R$. Therefore the equality $C=(GC)^{\circ}$ is possible if and only if $0=1/(\gamma b)$ and $b=\infty$. Since we assumed that $b<\infty$, we see  that no $C$ from Case \beqref{case:C=[0,b]} can solve \beqref{eq:FixedPointConvexGeometry}. The analysis in the  other cases is similar. 
\end{proof}
It is of interest to observe that the solutions $C_{-}=(-\infty,0]$ and $C_+=[0,\infty)$ can be thought of as being  the  limits of the solutions $C_b$ as $b\to 0$ and $b\to\infty$, respectively. We also observe that from Lemma \bref{lem:f=f^*G*} it follows that the function $f_{b}:=\frac{1}{2}\M_{C_{b}}^2$ 
solves the equation $f_{b}(x)=f_{b}^*(-x)$, $x\in X$. This function can be written explicitly as $f_{b}(x):=\frac{b^2}{2}x^2$ if $x\in (-\infty,0]$ and $f_{b}(x):=\frac{1}{2b^2}x^2$ if $x\in [0,\infty)$, and it has been discussed (in a slightly different notation) in \cite[Example 13.2, Equation (13.4)]{IusemReemReich2019jour}. 

\section{$G$ is not positive definite: existence}\label{sec:ExistenceGisNotPositiveDefinite}
In this section we present two general existence results in the case where $G$ is not necessarily positive definite. Before formulating these results we need to recall some terminology. 

Given a real (and necessarily separable) Hilbert space $X\neq \{0\}$ and a linear operator $T:X\to X$, we say that $T$ is a diagonal operator if $T$ is bounded and there exists a countable  orthonormal basis (namely, a countable and complete orthonormal system) $(e_j)_{j\in J}$ in $X$ and a sequence $(t_j)_{j\in J}$ of real numbers (here $J$ can be finite or countable  infinite) such that $T(x_j)_{j\in J}=(t_j x_j)_{j\in J}$ for all $(x_j)_{j\in J}\in X$, where the tuple $(x_j)_{j\in J}$ of real numbers represents the vector $x:=\sum_{j\in J}x_j e_j\in X$. Of course, since $X$ is real, a diagonal operator is self-adjoint. A linear operator $U:X\to X$ is called unitary if $U$ is bounded, invertible and satisfies $U^{-1}=U^*$. We say that a linear operator $G:X\to X$ can be diagonalized to $T$ using a unitary operator $U:X\to X$ (or, in short, that $G$ can be diagonalized) if  $T:X\to X$ is a diagonal operator and we have $T=U^{-1}G U$.  We denote by $\abs(T):X\to X$ the diagonal operator defined by $\abs(T)(x_j)_{j\in J}:=(|t_j|x_j)_{j\in J}$ for each $x=(x_j)_{j\in J}\in X$. It is well known and can be verified without much difficulty that if $G$ can be diagonalized, then $G$ is necessarily bounded and self-adjoint. If we also assume that $G$ is invertible, then a simple verification shows that $T$ is invertible too and, since both $T$ and $T^{-1}$ are bounded, there are positive numbers $\mu_1\leq \mu_2$ such that $\mu_1\leq |t_j|\leq \mu_2$ for each $j\in J$. In particular, both $\abs(T)$ and $U\abs(T)U^{-1}$ are positive definite and invertible. 

\begin{prop}\label{prop:ExistenceGisNotPositiveDefinite}
Given a real Hilbert space $X$, suppose that  $G:X\to X$ is linear, continuous and invertible.  If there exists some $A:X\to X$ which is positive definite and invertible, and also  satisfies  the operator equation 
\begin{equation}\label{eq:AG}
A=GA^{-1}G^*,
\end{equation}
then the ellipsoid $C:=\{x\in X: \langle Ax,x\rangle\leq 1\}$ solves \beqref{eq:FixedPointConvexGeometry}. In particular, if $G$ can be diagonalized to a diagonal operator $T$ using a unitary operator $U:X\to X$, then \beqref{eq:FixedPointConvexGeometry} has a solution which is an ellipsoid: 
this is the ellipsoid $C$ induced by the operator $A:=U\abs(T)U^{-1}$. 
\end{prop}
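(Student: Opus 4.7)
The plan is to handle the two assertions sequentially. For the main claim I will verify directly that, under the hypotheses, the ellipsoid $C:=\{x\in X: \langle Ax,x\rangle\leq 1\}$ satisfies $C=(GC)^{\circ}$. The central observation is that the fixed-point equation restricted to ellipsoids translates, via Lemma \bref{lem:Ellipsoid}, into the operator equation $A=GA^{-1}G^*$.

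First I would rewrite $GC$ by the change of variables $y=Gx$, which is admissible since $G$ is invertible. The defining inequality becomes
\[
\langle AG^{-1}y,G^{-1}y\rangle = \langle (G^*)^{-1}AG^{-1}y,y\rangle \leq 1,
\]
so that $GC = \{y\in X: \langle By,y\rangle \leq 1\}$ with $B:=(G^*)^{-1}AG^{-1}$. A short verification shows $B$ is positive definite and invertible: self-adjointness follows from $A^*=A$, positivity from positive definiteness of $A$ together with the fact that $G^{-1}y\neq 0$ whenever $y\neq 0$, and invertibility from $B$ being a composition of invertibles. This places $GC$ within the scope of Lemma \bref{lem:Ellipsoid}\beqref{item:EllipsoidPolarSet}, which yields
\[
(GC)^{\circ}=\{x\in X: \langle B^{-1}x,x\rangle \leq 1\}.
\]
The key algebraic identity is $B^{-1}=GA^{-1}G^*$, and by the hypothesis on $A$ this equals $A$ itself. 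Hence $(GC)^{\circ}=C$, as desired.

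For the ``in particular'' assertion I would set $A:=U\abs(T)U^{-1}$ with $G=UTU^{-1}$, $U$ unitary, $T$ diagonal, and check the three requirements on $A$. Since $U^{-1}=U^*$ and $T$ is self-adjoint, both $G$ and $A$ are self-adjoint; in particular $G^*=UTU^*$. The invertibility of $G$, hence of $T$, gives a uniform positive lower bound $|t_j|\geq \mu_1>0$ (recorded in the preamble to the proposition), so $\abs(T)$ is positive definite and invertible, and therefore so is $A=U\abs(T)U^*$, with inverse $U\abs(T)^{-1}U^*$. The relation $A=GA^{-1}G^*$ then reduces, after cancelling the inner factors $U^*U=I$, to the diagonal identity
\[
T\,\abs(T)^{-1}\,T=\abs(T),
\]
which holds entrywise since $t_j\cdot |t_j|^{-1}\cdot t_j = |t_j|$ for each $j\in J$. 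The first part of the proposition then produces the required solution.

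The main obstacle is conceptual rather than computational: one must first recognize $A=GA^{-1}G^*$ as the correct algebraic shadow of $C=(GC)^{\circ}$ within the class of ellipsoids, and then identify $A=U\abs(T)U^{-1}$ as the natural ``absolute value'' construction solving this operator equation in the diagonalizable setting. Once the ansatz is in hand, both halves of the proof are routine verifications combining Lemma \bref{lem:Ellipsoid} with elementary operator algebra.
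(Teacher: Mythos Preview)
Your argument is correct and follows essentially the same route as the paper's: both compute $GC$ via the change of variables $y=Gx$, recognize it as the ellipsoid $\{y:\langle (G^*)^{-1}AG^{-1}y,y\rangle\leq 1\}$, invoke Lemma~\bref{lem:Ellipsoid}\beqref{item:EllipsoidPolarSet}, and use the operator equation \beqref{eq:AG} to identify the resulting ellipsoid with $C$; the diagonalizable case is likewise verified in both by reducing $GA^{-1}G^*=A$ to the diagonal identity $T\,\abs(T)^{-1}T=\abs(T)$. The only cosmetic difference is that the paper first rewrites \beqref{eq:AG} as $A^{-1}=(G^{-1})^*AG^{-1}$ and substitutes before taking the polar, whereas you name the intermediate operator $B$ and compute $B^{-1}$ afterwards.
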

\begin{proof}
An immediate consequence of \beqref{eq:AG} is the equality 
\begin{equation}\label{eq:A^{-1}=(G^{-1})^*AG^{-1}}
A^{-1}=(G^{-1})^*AG^{-1}.
\end{equation}
Now, since $A$ is positive definite and invertible, and since $G$ is invertible, the change of variables $y:=Gx$,  Lemma \bref{lem:Ellipsoid}\beqref{item:EllipsoidPolarSet} and \beqref{eq:A^{-1}=(G^{-1})^*AG^{-1}} imply that 
\begin{multline}\label{eq:GA-ellipsoid}
(GC)^{\circ}=\{Gx\in X: \langle Ax,x\rangle\leq 1\}^{\circ}
=\{y\in X: \langle AG^{-1}y,G^{-1}y\rangle\leq 1\}^{\circ}\\
=\{y\in X: \langle (G^{-1})^*AG^{-1}y,y\rangle\leq 1\}^{\circ}
=\{y\in X: \langle A^{-1}y,y\rangle\leq 1\}^{\circ}\\
=\{y\in X: \langle Ay,y\rangle\leq 1\}=C,
\end{multline}
namely \beqref{eq:FixedPointConvexGeometry} holds. Finally, suppose that $T=U^{-1}GU$ for some unitary operator $U$ and a diagonal operator $T$, and let $A:=U\abs(T)U^{-1}$. Since $G=UTU^{-1}$, $T=T^*$, $T^2=(\abs(T))^2$, $U^{-1}=U^*$ and since $T$ commutes with other diagonal operators, we have  
\begin{multline*}
GA^{-1}G^*=UTU^{-1}U(\abs(T))^{-1}U^{-1}(U^{-1})^*T^*U^*=UT(\abs(T))^{-1}T^*U^*\\
=UT^2(\abs(T))^{-1}U^{-1}=U\abs(T)U^{-1}=A,
\end{multline*}
that is, \beqref{eq:AG} holds. Since $A$ is positive definite, we can conclude from previous lines that $C:=\{x\in X: \langle Ax,x\rangle\leq 1\}$ solves \beqref{eq:FixedPointConvexGeometry}.
\end{proof}
Interestingly, a similar equation to \beqref{eq:AG} appears in \cite[Lemma 7.1, first equation in (7.1)]{IusemReemReich2019jour} (with $\tau=1$, $G=E^*$) in a different context.

We finish this short section with the following corollary which is an immediate consequence of Proposition \bref{prop:ExistenceGisNotPositiveDefinite} and the well-known result which says that a self-adjoint operator which acts on a finite-dimensional Euclidean space can be diagonalized.
\begin{cor}\label{cor:SelfAdjoint}
If  $X$ is finite-dimensional and $G$ is self-adjoint, then \beqref{eq:FixedPointConvexGeometry} always has at least one solution, and this solution is an ellipsoid.
\end{cor}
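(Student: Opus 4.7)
The plan is to derive Corollary \bref{cor:SelfAdjoint} as an immediate consequence of Proposition \bref{prop:ExistenceGisNotPositiveDefinite} combined with the classical finite-dimensional spectral theorem for self-adjoint operators. Under the corollary's hypotheses, $X$ is a finite-dimensional real inner product space and $G:X\to X$ is self-adjoint; by the standing invertibility assumption from Theorem \bref{thm:FixedPointConvexGeometry}, $G$ is also invertible, and in finite dimensions continuity is automatic. The spectral theorem then provides an orthonormal basis $(e_j)_{j\in J}$ of eigenvectors of $G$ with real eigenvalues $(t_j)_{j\in J}$, none of which is zero (by invertibility of $G$).

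Next, I would translate this spectral decomposition into the language preceding Proposition \bref{prop:ExistenceGisNotPositiveDefinite}. Define the diagonal operator $T$ by $T(x_j)_{j\in J} := (t_j x_j)_{j\in J}$ in coordinates relative to $(e_j)_{j\in J}$, and let $U:X\to X$ be the orthogonal change-of-basis operator sending the standard basis (in the coordinate representation) to $(e_j)_{j\in J}$. Since $U$ is orthogonal it is unitary in the sense of the paper (i.e.\ $U^{-1}=U^*$), and by construction $T = U^{-1} G U$; thus $G$ can be diagonalized to the diagonal operator $T$ via the unitary operator $U$, in the precise sense required for the second clause of Proposition \bref{prop:ExistenceGisNotPositiveDefinite}.

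Having verified the hypotheses of the second clause of Proposition \bref{prop:ExistenceGisNotPositiveDefinite}, I would conclude directly: the operator $A := U\abs(T)U^{-1}$ is positive definite and invertible (this is observed in the paragraph preceding that proposition, since $|t_j|\geq \mu_1 > 0$ for all $j$), and the ellipsoid $C := \{x\in X : \langle Ax, x\rangle \leq 1\}$ solves equation \beqref{eq:FixedPointConvexGeometry}. This $C$ is an ellipsoid in the sense introduced in Section \bref{sec:Preliminaries}, which completes the proof.

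There is essentially no substantive obstacle: the whole work has already been done in Proposition \bref{prop:ExistenceGisNotPositiveDefinite}. The only point requiring any care is the bookkeeping that identifies the spectral decomposition of a finite-dimensional self-adjoint operator with the definition of ``diagonalizable by a unitary'' used in the paper; this is immediate from the fact that an orthonormal basis yields an orthogonal (hence unitary) change-of-coordinates operator.
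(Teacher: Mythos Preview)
Your proposal is correct and follows essentially the same approach as the paper: the paper states that the corollary is an immediate consequence of Proposition \bref{prop:ExistenceGisNotPositiveDefinite} together with the well-known fact that a self-adjoint operator on a finite-dimensional Euclidean space can be diagonalized by a unitary (orthogonal) operator. You have simply spelled out this immediate consequence in slightly more detail.
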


\section{$G$ is not positive definite: non-uniqueness}\label{sec:NonUniqueness}
In this section we show, by means of examples and propositions, that when $G$ is not positive definite, then non-uniqueness of the solution to \beqref{eq:FixedPointConvexGeometry} can hold. We also discuss a case (Proposition \bref{prop:BallUniqueEllipsoid}) where uniqueness does hold if one restricts attention to a special subclass of solutions. 

\begin{expl}\label{ex:Cone}
In this example we consider the case where  $G=-I$, namely \beqref{eq:FixedPointConvexGeometry} becomes $C=(-C)^{\circ}$. 

Assume first that $X$ is any real Hilbert space satisfying $\dim(X)\geq 2$ (including the case where $X$ is infinite-dimensional). One solution to \beqref{eq:FixedPointConvexGeometry} is, of course, the unit ball (see also Proposition \bref{prop:ExistenceGisNotPositiveDefinite} above). In order to obtain additional solutions we fix an arbitrary unit vector $x_0\in X$ and define    
\begin{equation}\label{eq:Cone}
C(x_0):=\{0\}\bigcup\left\{0\neq c\in X: \left\langle \frac{c}{\|c\|},x_0\right\rangle\geq \frac{1}{\sqrt{2}}\right\}.
\end{equation}
The set $C(x_0)$ is  a circular cone with its main symmetry axis in the direction of $x_0$ and having half-aperture $\pi/4$, that is, the angle between any $c\in C(x_0)$ and $x_0$ is at most $\pi/4$. It is sometimes called  ``the ice-cream cone'' or ``the Lorentz cone''. The cone $C(x_0)$  solves \beqref{eq:FixedPointConvexGeometry}. This claim in scattered in various forms and settings in the literature, for instance in \cite[p. 140]{Cegielski2012book} and \cite[p. 392]{Goffin1980jour}. A proof of it can be found in \cite[pp. 16--17]{ReemReich2017b-arXiv[v1]}. A closely related example, which is, in fact, the above one in disguise, appears in \cite[p. 130]{BauschkeCombettes2017book} and \cite[p. 28]{DraganMorozanStoica2010book} (note: in the terminology of \cite{BauschkeCombettes2017book,DraganMorozanStoica2010book} the claim that $C(x_0)$ is a self-dual cone implies, in our terminology, that $C(x_0)=(-C(x_0))^{\circ}$). Since $x_0$ can be an arbitrary unit vector, we conclude that \beqref{eq:FixedPointConvexGeometry} has infinitely many solutions which are unbounded and contain the origin on their boundaries. 

Assume now that $X=\R^n$ for some $n\in\N$. Then a simple verification shows that the positive orthant 
\begin{equation*}
C:=\{x=(x_k)_{k=1}^n\in X: x_k\geq 0,\,\forall k\in\{1,\ldots,n\}\} 
\end{equation*} 
solves \beqref{eq:FixedPointConvexGeometry}, as well as any rotation of this orthant (see Proposition \bref{prop:A*-1GA-1=G} below). This claim can be extended to infinite-dimensional Hilbert spaces. As in the case of $C(x_0)$, the claim related to the positive  orthant is also scattered in various forms in the literature: see, for instance, \cite[p. 122]{BauschkeCombettes2017book}, \cite[p. 23]{DraganMorozanStoica2010book} and \cite[p. 392]{Goffin1980jour}. 
\end{expl}

\begin{expl}\label{ex:Simplex}
Here we still consider the case where $G=-I$ and we assume that $X=\R^n$ for some $n\in\N$, $n\geq 2$. We show below that certain regular simplices in $X$ solve \beqref{eq:FixedPointConvexGeometry}. It is not clear to us if this fact has ever been proved formally in either a published or an unpublished form before our paper, but we note that it has been observed before us (for instance, around the beginning of 2017 Dor-On \cite{Dor-On2017pc} observed it in the case $n=3$ and conjectured that it holds for all $n\in\N$). 

Fix $r>0$ and consider a regular simplex $S(r)\subset X$ having circumradius $r$. Basic properties of $S(r)$ (which can actually be used inductively in order to construct $S(r)$) are: 
\begin{itemize}
\item Its center of mass (namely, its centroid) is the origin; 
\item each one of its $n+1$ vertices $v_1,\ldots,v_{n+1}$ is located at distance $r$ from the origin, namely, $\|v_i\|=r$ for each $i\in I:=\{1,\ldots,n+1\}$; 
\item there is a quantitative one-to-one correspondence between its facets and its vertices: on the one hand, given a facet $F$ of $S(r)$, the ray which emanates from the center of mass  of $F$ and passes through the origin also passes via some (unique) vertex of $S(r)$. On the other hand, given a vertex $v$ of $S(r)$, the ray which emanates from $v$ and passes via the origin hits one (and only one) of the facets of $S(r)$ and is orthogonal to this facet; the hitting point is the vector $-v/n$ which also coincides with the center of mass of this facet.  
\end{itemize}

It follows that we can write $S(r)=\cap_{i=1}^{n+1}S_i(r)$, where $S_i(r)$ is the halfspace which contains 0 and also contains, on its boundary, the $i$-th facet of $S(r)$ (that is, $S_i(r):=\{x\in X: \langle x,-v_i/n\rangle\leq r^2/n^2\}$ for all $i\in I$). On the other hand, since $S(r)=\conv\{v_1,\ldots,v_{n+1}\}$, we can use the well-known and elementary result about the polar of a convex hull of finitely many points \cite[p. 144]{Barvinok2002book} to get that $S(r)^{\circ}=\cap_{i=1}^{n+1}S'_i(r)$, where  $S'_i(r):=\{x\in X: \langle x,v_i\rangle\leq 1\}$ for each $i\in I$.

Our goal is to find some $r>0$ such that $S(r)$ solves \beqref{eq:FixedPointConvexGeometry}. From the previous paragraph and the immediate identity $(-C)^{\circ}=-C^{\circ}$ which holds for every nonempty set $C\subseteq X$ (see Lemma \bref{lem:gamma-C-is-one-to-one}\beqref{item:gamma_C composition}), we have $(-S(r))^{\circ}=-S(r)^{\circ}=-\cap_{i=1}^{n+1}S'_i(r)$. 
By recalling the immediate identity $-\cap_{j\in J}A_j=\cap_{j\in J}(-A_j)$ which holds for any collection $(A_j)_{j\in J}$ of nonempty subsets of $X$ (here $J$ is an arbitrary nonempty index set), we see that $-\cap_{i=1}^{n+1}S'_i(r)=\cap_{i=1}^{n+1}(-S'_i(r))$. Since  $S(r)=\cap_{i=1}^{n+1}S_i(r)$, we see that in order to show that $S(r)=(-S(r))^{\circ}$ it is sufficient to find $r>0$ such that $-S'_i(r)=S_i(r)$ for every $i\in I$. Since we can write $S_i(r)=\{x\in X: \langle x,-(n/r^2)v_i\rangle\leq 1\}$ and $-S'_i(r):=\{x\in X: \langle x,-v_i\rangle\leq 1\}$ for each $i\in I$, the choice $r:=\sqrt{n}$ ensures that indeed $-S'_i(r)=S_i(r)$ for every $i\in I$, as required. Of course, any rotation of $S(\sqrt{n})$ solves  \beqref{eq:FixedPointConvexGeometry} as well (see Proposition \bref{prop:A*-1GA-1=G} below).
\end{expl}

\begin{expl}\label{ex:Ellipsoid}
Let $X:=\R^2$ and for each $(x_1,x_2)\in\R^2$ define $G(x_1,x_2):=(x_2,-x_1)$. 
It can be checked directly (or by using Lemma \bref{lem:Ellipsoid}\beqref{item:EllipsoidPolarSet}) that for all $\lambda>0$, the ellipse $C(\lambda):=\{x=(x_1,x_2)\in X: \lambda^2x_1^2+(1/\lambda^2)x_2^2\leq 1\}$ solves \beqref{eq:FixedPointConvexGeometry}.  Similarly, given $n\in \N$, let $X:=\R^{2n}$ and let $G:X\to X$ be defined by $G(x_1,x_2,\ldots,x_{2n-1},x_{2n}):=(x_2,-x_1,\ldots,x_{2n},-x_{2n-1})$ for all $x=(x_i)_{i=1}^{2n}\in X$. Then for all positive numbers $\lambda_1,\ldots,\lambda_n$, the following ellipsoid solves \beqref{eq:FixedPointConvexGeometry}:
\begin{equation*}
C(\lambda_1,\ldots,\lambda_n):=\{x\in X: \sum_{i=1}^n(\lambda_i^2 x_{2i-1}^2+(1/\lambda_i^2)x_{2i}^2)\leq 1\}.
\end{equation*}
\end{expl}

\begin{expl}\label{ex:L1Linfty}
Let $X:=\R^2$ and for each $(x_1,x_2)\in\R^2$ define
\begin{equation*}
G\begin{pmatrix}
x_1\\
x_2
\end{pmatrix}
:=\begin{pmatrix}
   \frac{1}{\sqrt{2}} & \frac{1}{\sqrt{2}} \\
   -\frac{1}{\sqrt{2}} & \frac{1}{\sqrt{2}} 
 \end{pmatrix}
\begin{pmatrix}
x_1\\
x_2
\end{pmatrix}.
\end{equation*}
A direct calculation shows that both the square $C_{\infty}(2^{-0.25}):=\{(x_1,x_2)\in\R^2: \max\{|x_1|,|x_2|\}\leq 2^{-0.25}\}$ and the rhombus $C_1(2^{0.25}):=\{(x_1,x_2)\in\R^2: |x_1|+|x_2|\leq 2^{0.25}\}$ solve \beqref{eq:FixedPointConvexGeometry}. A third solution is the unit disc $B$ since $GB=B$ and $B=B^{\circ}$ (according to Proposition \bref{prop:PositiveDefiniteG}). 
\end{expl}

The next proposition provides further evidence to the non-uniqueness phenomenon. 
\begin{prop}\label{prop:A*-1GA-1=G}
Given a linear, continuous and invertible operator $G:X\to X$, if $C$ solves \beqref{eq:FixedPointConvexGeometry} and there exists a linear, continuous and invertible operator $A:X\to X$ which satisfies
\begin{equation}\label{eq:MoreSolutions}
(A^{-1})^*GA^{-1}=G,
\end{equation}
then $S:=AC$ also solves \beqref{eq:FixedPointConvexGeometry}. In particular, if $G=\lambda I$ for some $0\neq \lambda\in\R$, then given a solution $C$ to \beqref{eq:FixedPointConvexGeometry} and an arbitrary unitary operator $A$, the subset $S:=AC$ also solves \beqref{eq:FixedPointConvexGeometry}, namely any rotation of $C$ also solves \beqref{eq:FixedPointConvexGeometry}. 
\end{prop}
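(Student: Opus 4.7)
The plan is to reduce the claim $S = (GS)^{\circ}$ to a direct algebraic verification based on Lemma \bref{lem:gamma-C-is-one-to-one}\beqref{item:gamma_C composition}, which provides the master identity $(BD)^{\circ} = (B^*)^{-1}D^{\circ}$ for every continuous invertible linear $B:X\to X$ and every $\emptyset\neq D\subseteq X$. This identity is what makes both the fixed point equation \beqref{eq:FixedPointConvexGeometry} and the hypothesis \beqref{eq:MoreSolutions} interact cleanly.

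First, I would translate the hypothesis $(A^{-1})^{*}GA^{-1}=G$ into the convenient adjoint form. Multiplying both sides on the right by $A$ gives $(A^{-1})^{*}G = GA$; taking adjoints of this yields $G^{*}A^{-1}=A^{*}G^{*}$, and multiplying on the right by $A$ gives $G^{*}=A^{*}G^{*}A$, which is equivalent to
\begin{equation*}
(A^{*})^{-1}G^{*}=G^{*}A.
\end{equation*}
This is the single identity I will actually use. Second, I would rephrase the hypothesis $C=(GC)^{\circ}$ in the polar-free form $C^{\circ}=G^{*}C$, by applying Lemma \bref{lem:gamma-C-is-one-to-one}\beqref{item:gamma_C composition} to the right-hand side and then multiplying by $G^{*}$.

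Now to verify $S=(GS)^{\circ}$, I would compute directly, using Lemma \bref{lem:gamma-C-is-one-to-one}\beqref{item:gamma_C composition} with $B:=GA$ (invertible as a composition of invertibles), the reformulation $C^{\circ}=G^{*}C$, and the identity $(A^{*})^{-1}G^{*}=G^{*}A$ in that order:
\begin{equation*}
(GS)^{\circ}=(GAC)^{\circ}=((GA)^{*})^{-1}C^{\circ}=(G^{*})^{-1}(A^{*})^{-1}C^{\circ}=(G^{*})^{-1}(A^{*})^{-1}G^{*}C=(G^{*})^{-1}G^{*}AC=AC=S,
\end{equation*}
which proves the first assertion. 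For the ``In particular'' part, if $G=\lambda I$ and $A$ is unitary, then $A^{-1}=A^{*}$ and hence $(A^{-1})^{*}=A$, so $(A^{-1})^{*}GA^{-1}=\lambda AA^{*}=\lambda I=G$; therefore \beqref{eq:MoreSolutions} holds automatically and the general statement applies to every unitary $A$, yielding the rotation invariance claim.

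There is no real obstacle here: the whole argument is a short chain of substitutions, and the only thing one has to be careful about is the adjoint bookkeeping when turning the hypothesis into the form $(A^{*})^{-1}G^{*}=G^{*}A$ and when expanding $((GA)^{*})^{-1}$. Once these two pieces are in place, the four equalities above fall out immediately from Lemma \bref{lem:gamma-C-is-one-to-one}\beqref{item:gamma_C composition}.
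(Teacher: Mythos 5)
Your proof is correct and rests on the same key fact as the paper's, namely the identity $(BD)^{\circ}=(B^*)^{-1}D^{\circ}$ from Lemma \bref{lem:gamma-C-is-one-to-one}\beqref{item:gamma_C composition}; the paper simply runs the computation in the other direction, rewriting $S=A(GC)^{\circ}$ as $((A^{-1})^*GA^{-1}S)^{\circ}$ and invoking \beqref{eq:MoreSolutions} in its original form, whereas you take adjoints of the hypothesis first and unpack the polars. Both the adjoint bookkeeping and the verification of \beqref{eq:MoreSolutions} for unitary $A$ when $G=\lambda I$ check out.
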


\begin{proof}
Since $C$ solves \beqref{eq:FixedPointConvexGeometry}, we have $AC=A(GC)^{\circ}$. From Lemma \bref{lem:gamma-C-is-one-to-one}\beqref{item:gamma_C composition} and the equality $S=AC$ we have $S=A(GC)^{\circ}=((A^{-1})^*GC)^{\circ}=((A^{-1})^*GA^{-1}S)^{\circ}$. Since we assume that \beqref{eq:MoreSolutions} holds, we conclude that $S=(GS)^{\circ}$, that is, $S$ solves \beqref{eq:FixedPointConvexGeometry}. Finally, if $G=\lambda I$ for some $0\neq\lambda\in\R$, then any unitary operator $A$ satisfies \beqref{eq:MoreSolutions}, and thus from the previous lines we conclude that if $C$ solves the equation $C=(\lambda C)^{\circ}$, then $S:=AC$ also solves this equation. 
\end{proof}
Here are a few comments related to Proposition \bref{prop:A*-1GA-1=G}. 
\begin{remark}\label{rem:GA}
\begin{enumerate}[(i)]
\item\label{item:Ball(lambda)} When $G=\lambda I$ for some $\lambda>0$, then Proposition \bref{prop:PositiveDefiniteG} ensures that the ellipsoid $C=\{x\in X: \langle Gx,x\rangle\leq 1\}$ is the unique solution to \beqref{eq:FixedPointConvexGeometry}. There is no contradiction to Proposition \bref{prop:A*-1GA-1=G} since $C$ is just the ball of radius $1/\sqrt{\lambda}$ about the origin, and it is equal to any rotation of itself. 
\item\label{item:C=AC} The assertion mentioned in Part \beqref{item:Ball(lambda)} can be generalized. More precisely, suppose that \beqref{eq:FixedPointConvexGeometry} has a unique solution $C$. For example, this happens if $G:X\to X$ is positive definite and invertible, as follows from Proposition \bref{prop:PositiveDefiniteG} which ensures that the ellipsoid $C=\{x\in X: \langle Gx,x\rangle\leq 1\}$ is the unique solution to \beqref{eq:FixedPointConvexGeometry}. Assume further that some linear, continuous and  invertible operator $A:X\to X$ satisfies \beqref{eq:MoreSolutions}. Then Proposition \bref{prop:A*-1GA-1=G} implies that $S:=AC$ solves \beqref{eq:FixedPointConvexGeometry} as well. The uniqueness of the solution to \beqref{eq:FixedPointConvexGeometry} implies that $C=AC$. An illustration of this assertion with an operator $G$ which is usually non-scalar is described below: here $X=\R^3$,  and $G$ and $A$ are the linear operators having matrix forms
\begin{equation*}
\wt{G}:=\begin{pmatrix}
   1  & 0 &  0\\
   0 & 1  &  0\\
	0  & 0  &  \lambda
 \end{pmatrix}
\quad \textnormal{and}\quad 
\wt{A}:=\begin{pmatrix}
   \cos(\alpha)  & -\sin(\alpha) &  0\\
   \sin(\alpha) & \cos(\alpha) &  0\\
	0                    & 0                  &  1
 \end{pmatrix},
\end{equation*}
respectively, where $\alpha\in [0,2\pi]$ is arbitrary and $\lambda$ is an arbitrary positive number. Indeed, direct calculations show that \beqref{eq:MoreSolutions} holds and $AC=C$. Alternatively, one can observe that  the ellipsoid is $C=\{(x_1,x_2,x_3)\in X: x_1^2+x_2^2+(\lambda x_3)^2\leq 1\}$ and $A$ rotates this ellipsoid counterclockwise by an angle $\alpha$ about the $x_3$ axis. Since the $x_3$ axis is an axis of symmetry for this ellipsoid, we have $AC=C$. 
\end{enumerate}
\end{remark}

Proposition \bref{prop:X=R} and Examples \bref{ex:Cone}--\bref{ex:Simplex} above show that when $G=-I$, then \beqref{eq:FixedPointConvexGeometry} has many solutions. Proposition \bref{prop:BallUniqueEllipsoid} below  shows that under further assumptions on the class of possible solutions,  uniqueness does hold. 
\begin{prop}\label{prop:BallUniqueEllipsoid}
Suppose that  $G:X\to X$ is defined by $G:=-I$. Then the unit  ball is the unique solution to \beqref{eq:FixedPointConvexGeometry} in the class of all the centrally symmetric ellipsoids of $X$ which are induced by invertible positive definite operators.
\end{prop}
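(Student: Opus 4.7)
The plan is to reduce the claim to Proposition \bref{prop:PositiveDefiniteG} by exploiting central symmetry. Any admissible $C$ has the form $C=\{x\in X:\langle Ax,x\rangle\leq 1\}$ for some positive definite invertible $A:X\to X$, and because the quadratic form is unchanged under $x\mapsto -x$, we have $-C=C$. Substituting $G=-I$ into \beqref{eq:FixedPointConvexGeometry} therefore yields
\begin{equation*}
C=((-I)C)^\circ=(-C)^\circ=C^\circ,
\end{equation*}
which is exactly equation \beqref{eq:FixedPointConvexGeometry} with the positive definite invertible operator $I$ playing the role of $G$.

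Next I would invoke Proposition \bref{prop:PositiveDefiniteG} with this new operator $I$. That proposition asserts uniqueness of the solution in the class of \emph{all} nonempty subsets of $X$, and identifies the unique solution as $\{x\in X:\langle Ix,x\rangle\leq 1\}$, namely the Hilbertian unit ball. Since any admissible $C$ satisfies the transformed equation $C=C^\circ$, it must coincide with the unit ball, and the proof is complete.

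I do not foresee any genuine obstacle here: the only step worth a moment's thought is the verification of $-C=C$, which is transparent from the quadratic description furnished by Lemma \bref{lem:Ellipsoid}. A self-contained alternative that avoids Proposition \bref{prop:PositiveDefiniteG} would be to combine Lemma \bref{lem:Ellipsoid}\beqref{item:EllipsoidPolarSet} and Lemma \bref{lem:gamma-C-is-one-to-one}\beqref{item:gamma_C is one-to-one} with the equation $C=C^\circ$ to extract $\langle Ax,x\rangle=\langle A^{-1}x,x\rangle$ for every $x\in X$, then polarize (exploiting the self-adjointness of $A$ and $A^{-1}$) to deduce $A=A^{-1}$, hence $A^2=I$, and finally invoke the uniqueness of the positive semidefinite square root of the identity to conclude $A=I$. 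The route through Proposition \bref{prop:PositiveDefiniteG} is preferable as it simply recycles the machinery already developed in Section \bref{sec:GisPositiveDefinite}.
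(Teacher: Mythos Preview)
Your argument is correct and is genuinely different from the paper's. You observe that any centrally symmetric ellipsoid satisfies $-C=C$, so for such $C$ the equation $C=((-I)C)^{\circ}$ collapses to $C=C^{\circ}$, and then Proposition \bref{prop:PositiveDefiniteG} with $G=I$ finishes the job in one stroke (existence of the ball as a solution is contained in that same proposition, since $-B=B$). The paper instead avoids reducing to another instance of \beqref{eq:FixedPointConvexGeometry}: it computes $(GC)^{\circ}$ explicitly via Lemma \bref{lem:Ellipsoid}\beqref{item:EllipsoidPolarSet} to obtain $C=\{y:\langle A^{-1}y,y\rangle\leq 1\}$, passes to the Minkowski functional $f:=\tfrac12\M_C^2$, shows $f=f^*$ using Lemmas \bref{lem:(gamma_C)_polar=gamma_(C_polar)} and \bref{lem:gamma_C_conjugate_polar}, and then invokes the classical fact that the only self-conjugate function is $\tfrac12\|\cdot\|^2$. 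Your route is shorter and reuses Section \bref{sec:GisPositiveDefinite} directly; the paper's route illustrates once more the bridge between \beqref{eq:FixedPointConvexGeometry} and the functional equation \beqref{eq:f_Tf} (compare Lemma \bref{lem:f=f^*G*} and \cite[Proposition 9.1]{IusemReemReich2019jour}), which is a recurring theme in the paper. Your alternative via $A=A^{-1}$ and uniqueness of the positive square root is also valid, though it imports the square-root theorem that the paper deliberately sidesteps (see the discussion preceding Lemma \bref{lem:EllipticOperator}).
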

\begin{proof}
Let $B$ be the unit ball. Since $B=GB$ and $B=B^{\circ}$ (from Proposition \bref{prop:PositiveDefiniteG}), it follows that $B$  solves \beqref{eq:FixedPointConvexGeometry}. Suppose now that some centrally symmetric ellipsoid $C$, which is induced by an invertible positive definite operator $A$, also solves \beqref{eq:FixedPointConvexGeometry}. We need to prove that $C=B$. The equalities $C=\{x\in X: \langle Ax,x\rangle\leq 1\}$ and $C=(GC)^{\circ}=(-C)^{\circ}$, and Lemma \bref{lem:Ellipsoid}\beqref{item:EllipsoidPolarSet}, imply that  
\begin{multline}\label{eq:C=D}
C=(GC)^{\circ}=\{-x\in X: \langle Ax,x\rangle\leq 1\}^{\circ}
=\{y\in X: \langle A(-y),-y\rangle\leq 1\}^{\circ}\\
=\{y\in X: \langle Ay,y\rangle\leq 1\}^{\circ}
=\{y\in X: \langle A^{-1}y,y\rangle\leq 1\}=:D.
\end{multline}
Denote $f:=\frac{1}{2}\M_D^2$. Since $D\in\Kbo(X)$ (according to Lemma \bref{lem:Ellipsoid}\beqref{item:EllipsoidIsBounded}), it follows from Lemma \bref{lem:gamma_C_conjugate_polar} and Lemma \bref{lem:(gamma_C)_polar=gamma_(C_polar)} that $f^*=\frac{1}{2}\M_{D^{\circ}}^2$. But $D^{\circ}=\{x\in X: \langle Ax,x\rangle\leq 1\}$ according to Lemma \bref{lem:Ellipsoid}\beqref{item:EllipsoidPolarSet}, namely $D^{\circ}=C$. By recalling that $D=C$ (according to \beqref{eq:C=D}), we get $f^*=\frac{1}{2}\M_{D^{\circ}}^2=\frac{1}{2}\M_C^2=\frac{1}{2}\M_D^2=f$. Thus, by a classical result in convex analysis (see also \cite[Proposition 9.1]{IusemReemReich2019jour} for a more general statement), $f(x)=\frac{1}{2}\|x\|^2$ for each $x\in X$, namely $f=\frac{1}{2}\M_B^2$, that is, $\frac{1}{2}\M_B^2=\frac{1}{2}\M_D^2$. Since the Minkowski functional is nonnegative, it follows that $\M_D=\M_B$. We conclude from Lemma \bref{lem:gamma-C-is-one-to-one}\beqref{item:gamma_C is one-to-one} that $D=B$. The assertion follows since $C=D$. 
\end{proof}

\section{$G$ is not positive definite: non-existence}\label{sec:NonExistence}
In this section we show that if $G$ belongs to a class of ``semi-skew operators'', then \beqref{eq:FixedPointConvexGeometry} does not have any solution $C\in \Kbo(X)$. Semi-skew operators are defined as follows:
\begin{defin}\label{def:SemiSkew}
Let $X$ be a real Hilbert space of dimension at least 2. We say that $E:X\to X$ is a semi-skew operator with respect to the triplet $(u,\alpha_1,\alpha_2)$ (or, briefly, that $E$ is semi-skew) if the following conditions hold:  
\begin{enumerate}[(i)]
\item $u\in X$ is a unit vector;
\item $\alpha_1$ and $\alpha_2$ are two real numbers having the same sign (either both of them are positive or both are negative) and $\alpha_1\neq\alpha_2$;
\item for each $x\in X$, consider the unique decomposition $x=x_1+x_2$, where $x_1\in\R u$ and $x_2\in u^{\bot}$  and identify $x$ with $(x_1,x_2)\in \R u\times u^{\bot}\cong X$ and with $(x_2,x_1)\in u^{\bot}\times \R u\cong X$; then $E(x_1,x_2):=(\alpha_2 x_2,-\alpha_1 x_1)$. In other words, 

\begin{equation*}
E\begin{pmatrix}
x_1\\
x_2
\end{pmatrix}
:=\begin{pmatrix}
   0 & \alpha_2 \\
   -\alpha_1 & 0 
 \end{pmatrix}
\begin{pmatrix}
x_1\\
x_2
\end{pmatrix}=\alpha_2 P_{u^{\bot}}x-\alpha_1P_{\R u}x. 
\end{equation*} 
\end{enumerate}
\end{defin}
The following lemma is an immediate consequence of Definition \bref{def:SemiSkew}.
\begin{lem}\label{lem:SemiSkewProperties}
Suppose that $X$ is a real Hilbert space of dimension at least 2 and that $E:X\to X$ is a semi-skew operator with respect to $(u,\alpha_1,\alpha_2)$. Then $E$ is linear, continuous and invertible. Moreover, for each $(x_1,x_2)\in \R u\times u^{\bot}\cong X$ the following identities hold: $E^*(x_1,x_2)=(-\alpha_1 x_2,\alpha_2 x_1)$, $E^{-1}(x_1,x_2)=(-x_2/\alpha_1,x_1/\alpha_2)$ and $E^{-1}E^*(x_1,x_2)=(-(\alpha_2/\alpha_1)x_1,-(\alpha_1/\alpha_2)x_2)$. In particular, $E^*$ is semi-skew with respect to $(u,-\alpha_2,-\alpha_1)$ and $E^{-1}$ is semi-skew with respect to $(u,-1/\alpha_2,-1/\alpha_1)$.
\end{lem}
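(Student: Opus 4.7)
The plan is to verify this lemma as a direct consequence of Definition~\bref{def:SemiSkew}; no tools beyond the definitions of the adjoint, of the inverse, and of a semi-skew operator will be needed. The only care required is in the bookkeeping of the pair notation used in that definition.

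First I would show that $E$ is linear and continuous. Linearity follows from the linearity of the orthogonal decomposition $x\mapsto(P_{\R u}x,P_{u^\bot}x)$ combined with the linearity of the subsequent scalar multiplications, while continuity follows from the Pythagorean theorem applied to the orthogonal components of $Ex$, which yields $\|Ex\|\leq\max\{|\alpha_1|,|\alpha_2|\}\|x\|$. For invertibility I would exhibit the candidate $F(x_1,x_2):=(-x_2/\alpha_1,x_1/\alpha_2)$, which is well-defined since the hypothesis that $\alpha_1$ and $\alpha_2$ share a sign forces both to be nonzero; a direct substitution then verifies $E\circ F=F\circ E=I$, simultaneously yielding both the invertibility of $E$ and the claimed formula for $E^{-1}$.

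Next I would derive the formula for $E^*$ by computing $\langle Ex,y\rangle$ for arbitrary $x=x_1+x_2$ and $y=y_1+y_2$ in $X$, expanding via bilinearity and using the orthogonality $\R u\perp u^\bot$ to eliminate the cross terms. By symmetry of the inner product the resulting expression can be rewritten as $\langle x,z\rangle$ for a specific $z$; matching $z$ with the proposed value of $E^*y$ (read off the formula $E^*(x_1,x_2)=(-\alpha_1 x_2,\alpha_2 x_1)$) confirms that formula. The expression for $E^{-1}E^*$ then follows by direct composition of the two formulas already obtained.

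Finally, to verify that $E^*$ and $E^{-1}$ are themselves semi-skew, I would match the derived formulas against Definition~\bref{def:SemiSkew}. For $E^*$, setting $\alpha_1':=-\alpha_2$ and $\alpha_2':=-\alpha_1$, the formula $E^*(x_1,x_2)=(-\alpha_1 x_2,\alpha_2 x_1)$ rewrites as $(\alpha_2'x_2,-\alpha_1'x_1)$, which is exactly the form required by the definition; moreover, since $\alpha_1,\alpha_2$ share a sign and are distinct, so do $-\alpha_1,-\alpha_2$. The verification for $E^{-1}$ with the triplet $(u,-1/\alpha_2,-1/\alpha_1)$ is structurally identical (with the sign-preservation under reciprocation replacing sign-preservation under negation). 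The hardest part, such as it is, is keeping the pair notation consistent throughout; once that is handled uniformly, the whole proof reduces to checking a handful of algebraic identities.
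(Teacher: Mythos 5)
Your proof is correct and is precisely the routine verification the paper has in mind: the paper states this lemma without proof, as an immediate consequence of Definition \bref{def:SemiSkew}, and your direct computations of $E^{-1}$, $E^*$, and their composition supply exactly the omitted details. One caveat on the ``bookkeeping'' you rightly single out as the only delicate point: in the adjoint computation the cross terms vanish by orthogonality only if the output pair $(\alpha_2 x_2,-\alpha_1 x_1)$ is read \emph{positionally} (first slot the $\R u$-component, second slot the $u^{\bot}$-component, as in the matrix display of the definition); under the alternative reading suggested by the closing expression $\alpha_2 P_{u^{\bot}}x-\alpha_1P_{\R u}x$, the operator $E$ would be a self-adjoint diagonal operator and the stated formula for $E^*$ (hence for $E^{-1}E^*$) would fail, so your implicit choice of the positional reading -- the one the paper needs later when it applies $E^{-1}E^*$ in Lemma \bref{lem:NonExistenceSemiSkew} -- is the right one.
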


The next lemma, which is used in the proof of Lemma \bref{lem:NonExistenceSemiSkew}, is a special case of some results proved in \cite{IusemReemReich2019jour}. 
\begin{lem}\label{lem:FunctionalEquation}
{\bf (A special case of \cite[Lemma 5.1 and Equation (5.2) in Lemma 5.2]{IusemReemReich2019jour}): }
Suppose that $X$ is a real Hilbert space and that $E:X\to X$ is a linear, continuous and invertible operator. If $f:X\to [-\infty,\infty]$ solves the equation 
\begin{equation}\label{eq:SemiSkew}
f(x)=f^*(Ex), \quad x\in X,
\end{equation}
then $f$ is convex, proper and lower semicontinuous. Moreover, it satisfies the following functional equation:
\begin{equation}\label{eq:FunctionalEquation}
f(x)=f(E^{-1}E^*x),\quad  x\in X.
\end{equation}
\end{lem}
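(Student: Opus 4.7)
The plan is to establish the three regularity properties of $f$ first and then use the Fenchel--Moreau theorem to derive the functional equation from the fixed-point equation $f=f^{*}\circ E$.

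Convexity and lower semicontinuity of $f$ come essentially for free: the conjugate $f^{*}$ of any extended-real-valued function is convex and lower semicontinuous as a supremum of continuous affine functions, and composition with the continuous linear operator $E$ preserves both properties, so $f=f^{*}\circ E$ inherits them. For properness, I would first rule out $f\equiv +\infty$: if this were to hold, then $f^{*}\equiv -\infty$, and the fixed-point equation would force $f\equiv -\infty$, a contradiction. Picking some $x_{0}$ with $f(x_{0})<\infty$, the bound $f^{*}(y)\geq \langle y,x_{0}\rangle -f(x_{0})>-\infty$ shows that $f^{*}$ never equals $-\infty$, so neither does $f=f^{*}\circ E$; combined with $f\not\equiv +\infty$, this yields properness.

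With $f$ proper, convex and lower semicontinuous, the Fenchel--Moreau theorem gives $f^{**}=f$. Conjugating both sides of $f=f^{*}\circ E$ and performing the change of variables $z=Ex$ in the supremum yields
\begin{equation*}
(f^{*}\circ E)^{*}(y) = \sup_{z\in X}\bigl\{\langle (E^{-1})^{*}y,z\rangle - f^{*}(z)\bigr\} = f^{**}\bigl((E^{*})^{-1}y\bigr) = f\bigl((E^{*})^{-1}y\bigr),
\end{equation*}
where I have used the standard adjoint identity $(E^{-1})^{*}=(E^{*})^{-1}$, available since $E$ is invertible and continuous. Hence $f^{*}(y)=f((E^{*})^{-1}y)$ for every $y\in X$. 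Substituting $y=Ex$ into the original equation gives $f(x)=f((E^{*})^{-1}Ex)$. Since $((E^{*})^{-1}E)^{-1}=E^{-1}E^{*}$, replacing $x$ by $E^{-1}E^{*}x$ in this identity produces $f(E^{-1}E^{*}x)=f(x)$ for all $x\in X$, which is \beqref{eq:FunctionalEquation}.

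The delicate step is the properness argument: one must carefully exclude both $f\equiv +\infty$ and the attainment of the value $-\infty$ before the Fenchel--Moreau theorem may be invoked, and this is what makes the fixed-point equation \beqref{eq:SemiSkew} genuinely restrictive on admissible $f$. Once properness is in hand, the remainder reduces to a mechanical computation using invertibility of $E$ together with the adjoint identity $(E^{-1})^{*}=(E^{*})^{-1}$.
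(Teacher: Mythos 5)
Your proof is correct. Note that the paper does not prove this lemma at all; it imports it as a special case of \cite[Lemma 5.1 and Lemma 5.2]{IusemReemReich2019jour}, and your self-contained argument --- regularity of $f$ read off from $f=f^{*}\circ E$, properness by excluding $f\equiv+\infty$ and then the value $-\infty$, and finally biconjugation via Fenchel--Moreau together with $(E^{-1})^{*}=(E^{*})^{-1}$ --- is exactly the standard route such a proof takes, with all the delicate points (in particular the handling of the extended values before invoking $f^{**}=f$) treated correctly.
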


Using Lemma \bref{lem:FunctionalEquation}, we are able to prove Lemma \bref{lem:NonExistenceSemiSkew} and using this latter lemma, we can prove Proposition \bref{prop:NonExistence}.
\begin{lem}\label{lem:NonExistenceSemiSkew}
Let $X$ be a real Hilbert space of dimension at least two and suppose that $E:X\to X$ is a semi-skew linear operator with respect to some triplet $(u,\alpha_1,\alpha_2)$. Then there does not exist any solution $f:X\to(-\infty,\infty]$ to \beqref{eq:SemiSkew} which is upper semicontinuous at 0 and satisfies $f(0)\in\R$. 
\end{lem}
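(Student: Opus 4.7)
The plan is to reduce the problem to a rigidity statement about the functional equation \beqref{eq:FunctionalEquation} and then to extract a contradiction through the Legendre--Fenchel transform. First I would invoke Lemma \bref{lem:FunctionalEquation} to conclude that any candidate solution $f$ is automatically convex, proper, and lower semicontinuous, and satisfies $f=f\circ T$ where $T:=E^{-1}E^*$. Writing $X\cong \R u\times u^{\bot}$ and setting $\lambda:=\alpha_2/\alpha_1$, Lemma \bref{lem:SemiSkewProperties} gives the explicit diagonal form
\begin{equation*}
T(x_1,x_2)=\bigl(-\lambda x_1,\,-(1/\lambda)x_2\bigr),
\end{equation*}
with $\lambda>0$ and $\lambda\neq 1$ by the assumptions on the pair $(\alpha_1,\alpha_2)$. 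Since $T$ is invertible, the invariance $f\circ T=f$ iterates to $f\circ T^n=f$ for every $n\in\Z$.

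Next I would exploit the continuity of $f$ at $0$ (which follows from combining the assumed upper semicontinuity at $0$ with the lower semicontinuity granted by Lemma \bref{lem:FunctionalEquation}) together with $f(0)\in\R$. Assume without loss of generality that $\lambda>1$ (otherwise swap the axes by considering $1/\lambda$). Then for any $x_2\in u^{\bot}$, $T^n(0,x_2)=(0,(-1/\lambda)^n x_2)\to 0$ as $n\to\infty$, so $T^n$-invariance plus continuity at $0$ forces $f(0,x_2)=f(0)$. Similarly, $T^{-n}(x_1,0)\to 0$ as $n\to\infty$ for any $x_1\in\R u$, hence $f(x_1,0)=f(0)$. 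Now convexity yields
\begin{equation*}
f(x_1,x_2)\leq \tfrac{1}{2}f(2x_1,0)+\tfrac{1}{2}f(0,2x_2)=f(0),\qquad\forall (x_1,x_2)\in X,
\end{equation*}
so $f$ is bounded above on $X$ by the real number $f(0)$. Lemma \bref{lem:UpperBound} then forces $f\equiv c$ for some $c\in\R$.

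Finally, with $f\equiv c$ a direct computation of the Legendre--Fenchel transform gives $f^*(0)=-c$ and $f^*(y)=\infty$ for every $0\neq y\in X$. Plugging $x=0$ into \beqref{eq:SemiSkew} yields $c=-c$, so $c=0$; but for any $0\neq x\in X$ one has $Ex\neq 0$ by invertibility of $E$, hence $f^*(Ex)=\infty\neq 0=f(x)$, a contradiction.

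The main obstacle is the step that transfers the $T^{\pm n}$-invariance into equality with $f(0)$ on both coordinate subspaces $\R u$ and $u^{\bot}$: one axis contracts only under forward iteration of $T$ and the other only under backward iteration, so one must use $T$-invariance for all integers $n$ (not merely for positive $n$) and carefully invoke continuity at $0$ rather than merely lower semicontinuity, which is exactly where the hypothesis of upper semicontinuity at $0$ is crucial.
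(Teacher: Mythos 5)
Your proposal is correct and follows essentially the same route as the paper: reduce to the invariance $f=f\circ(E^{-1}E^*)$ via Lemma \bref{lem:FunctionalEquation}, iterate forward on the contracting axis and backward on the expanding one to bound $f$ on both coordinate subspaces by $f(0)$, use convexity and Lemma \bref{lem:UpperBound} to force $f$ constant, and derive a contradiction from the conjugate of a constant function. The only cosmetic difference is that the paper works with $\limsup$ and upper semicontinuity alone to get inequalities $f(\cdot)\leq f(0)$, whereas you combine it with the lower semicontinuity from Lemma \bref{lem:FunctionalEquation} to get equalities; both suffice.
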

\begin{proof}
Suppose to the contrary that some function $f:X\to\R$ which is upper semicontinuous at 0 solves \beqref{eq:SemiSkew} and satisfies $f(0)\in\R$. Lemma \bref{lem:FunctionalEquation} implies that $f$ satisfies \beqref{eq:FunctionalEquation}. Since $E$ is semi-skew with respect to $(u,\alpha_1,\alpha_2)$, we get from Lemma \bref{lem:SemiSkewProperties} and \beqref{eq:FunctionalEquation} that  
\begin{equation}\label{eq:f=Sf}
f(x_1,x_2)=f\left(-\frac{\alpha_2}{\alpha_1}x_1,-\frac{\alpha_1}{\alpha_2}x_2\right),\quad (x_1,x_2)\in \R u\times u^{\bot}\cong X.
\end{equation}
Since $\alpha_1$ and $\alpha_2$ have the same sign and $\alpha_1\neq\alpha_2$, either $0<\alpha_1/\alpha_2<1$ or $0<\alpha_2/\alpha_1<1$. Assume the first case; the proof in the second case is similar (by performing the operations below on the first component instead of on the second one and vice versa). Denote $\alpha:=\alpha_1/\alpha_2$. By putting $x_1=0$ and an arbitrary $x_2\in u^{\bot}$ in \beqref{eq:f=Sf}, we see that $f(0,x_2)=f(0,-\alpha x_2)=\ldots=f(0,(-\alpha)^m x_2)$ for every $m\in\N$. Since $\alpha\in (0,1)$ and $f$ is upper semicontinuous at $0=(0,0)$, we have 
 $f(0,x_2)=\limsup_{m\to\infty}f(0,(-\alpha)^m x_2)\leq f(\lim_{m\to\infty}(0,(-\alpha)^m x_2))=f(0,0)$ for each $x_2\in u^{\bot}$. Now, by making the change of variables $(y_1,y_2):=(-(1/\alpha)x_1,-\alpha x_2)$, we obtain from \beqref{eq:f=Sf} the equation $f(-\alpha y_1,-(1/\alpha)y_2)=f(y_1,y_2)$ for all $(y_1,y_2)\in \R u\times u^{\bot}$. Again, since $\alpha\in (0,1)$,  we can use a similar reasoning as in  previous lines to conclude that $f(y_1,0)\leq f(0,0)$ for all $y_1\in \R u$. Now let $x_1\in \R u$ and $x_2\in u^{\bot}$ be arbitrary. We can write $(x_1,x_2)=\frac{1}{2}(y_1,0)+\frac{1}{2}(0,y_2)$ for $y_1:=2x_1\in\R u$ and $y_2:=2x_2\in u^{\bot}$. Since $f$ is convex (Lemma \bref{lem:FunctionalEquation}), it follows from previous lines that 
\begin{equation}\label{eq:f(0,0)}
f(x_1,x_2)\leq \frac{1}{2}f(y_1,0)+\frac{1}{2}f(0,y_2)\leq f(0,0).
\end{equation} 
Since we assume that $f(0,0)\in\R$, we conclude that $f$ is bounded above by the real constant $f(0,0)$. Since $f$ is convex, we conclude from Lemma \bref{lem:UpperBound} that $f$ itself is equal to some real constant, say $f\equiv\sigma\in\R$. But then \beqref{eq:SemiSkew} implies that $f^*(Ex)=\sigma$ for every $x\in X$ and thus the change of variables $y:=Ex$ and the invertibility of $E$ imply that $f^*(y)=\sigma$ for each $y\in X$. This equality is impossible because if $f$ is equal to a real constant, then $f^*(x^*)=\infty$ for all $x^*\neq 0$ as a simple verification based on \beqref{eq:f^*} shows. Hence \beqref{eq:SemiSkew} cannot have any solution $f$ which is upper semicontinuous at 0 and satisfies $f(0)\in\R$. 
\end{proof}

\begin{prop}\label{prop:NonExistence}
If $\dim(X)\geq 2$ and $G:X\to X$ is semi-skew, then \beqref{eq:FixedPointConvexGeometry} does not have any solution which is bounded and contains 0 in its interior.
\end{prop}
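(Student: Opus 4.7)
The plan is to reduce the set equation to a scalar functional equation and then invoke Lemma \bref{lem:NonExistenceSemiSkew}. Suppose, by way of contradiction, that some $C\in\Kbo(X)$ solves \beqref{eq:FixedPointConvexGeometry}. By Lemma \bref{lem:f=f^*G*}, the function $f:=\frac{1}{2}\M_C^2$ then satisfies
\begin{equation*}
f(x)=f^*(G^*x),\quad x\in X,
\end{equation*}
which is precisely equation \beqref{eq:SemiSkew} with $E:=G^*$ in place of $E$.

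Next, I would verify that $E=G^*$ is itself semi-skew, so that Lemma \bref{lem:NonExistenceSemiSkew} is applicable to $f$. Since $G$ is assumed semi-skew with respect to some triplet $(u,\alpha_1,\alpha_2)$, Lemma \bref{lem:SemiSkewProperties} asserts precisely that $G^*$ is semi-skew with respect to $(u,-\alpha_2,-\alpha_1)$. (Here one only needs to note that $-\alpha_2$ and $-\alpha_1$ are again two distinct real numbers sharing a common sign, because $\alpha_1,\alpha_2$ do.)

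The remaining step is to check the regularity hypotheses of Lemma \bref{lem:NonExistenceSemiSkew}. Since $C\in \Kbo(X)$, the Minkowski functional $\M_C$ is finite and Lipschitz continuous (by the subadditivity of $\M_C$ together with \beqref{eq:Inequality gamma_C norm}, see \beqref{eq:M_CIsLipschitz}), so $f=\frac{1}{2}\M_C^2$ is continuous, hence in particular upper semicontinuous at $0$. Moreover, $\M_C(0)=0$ gives $f(0)=0\in\R$. Thus $f$ is a solution to \beqref{eq:SemiSkew} which is upper semicontinuous at $0$ and real-valued there, contradicting Lemma \bref{lem:NonExistenceSemiSkew} and completing the proof.

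There is essentially no hard step here once the earlier machinery is in place: Lemma \bref{lem:f=f^*G*} converts the geometric fixed point equation into the convex analytic equation \beqref{eq:SemiSkew}, Lemma \bref{lem:SemiSkewProperties} delivers the semi-skewness of the adjoint, and Lemma \bref{lem:NonExistenceSemiSkew} supplies the non-existence. The only point requiring minor care is the regularity of $f$ at the origin, which is guaranteed by $C\in\Kbo(X)$ and is precisely where the hypothesis that $C$ be bounded with $0$ in its interior enters the argument.
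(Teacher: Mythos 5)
Your proof is correct and follows essentially the same route as the paper's: reduce the set equation to \beqref{eq:SemiSkew} via Lemma \bref{lem:f=f^*G*}, note that $E:=G^*$ is semi-skew by Lemma \bref{lem:SemiSkewProperties}, and invoke Lemma \bref{lem:NonExistenceSemiSkew} using the continuity of $f=\frac{1}{2}\M_C^2$ and $f(0)=0$. The only detail to add is that the hypothesis only provides a solution that is bounded with $0$ in its interior, so one should first cite Proposition \bref{prop:ClosedConvex0} to conclude that any solution is automatically closed and convex, hence lies in $\Kbo(X)$, before starting from ``$C\in\Kbo(X)$'' as you do.
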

\begin{proof}
Suppose to the contrary that some bounded $C$ which contains 0 in its interior solves \beqref{eq:FixedPointConvexGeometry}. Because of Proposition \bref{prop:ClosedConvex0} it follows that $C\in \Kbo(X)$. Thus  Lemma \bref{lem:f=f^*G*} implies that $f:=\frac{1}{2}\M_C^2$ solves \beqref{eq:SemiSkew} with $E:=G^*$. This  statement contradicts Lemma \bref{lem:NonExistenceSemiSkew} since $E$ is semi-skew (because of Lemma \bref{lem:SemiSkewProperties} and the assumption that $G$ is semi-skew),  $f$ is continuous (as a result of \beqref{eq:M_CIsLipschitz} and the assumption that 0 is in the interior of $C$), and $f(0)=0\in\R$. 
\end{proof}
The assumption that $\alpha_1\neq \alpha_2$ in Proposition \bref{prop:NonExistence} (via Definition \bref{def:SemiSkew}) is essential: indeed, a counterexample is described in Example \bref{ex:Ellipsoid}.  

\section{Proof of Theorem \bref{thm:FixedPointConvexGeometry}}\label{sec:Proof}
\begin{proof}
Proposition \bref{prop:ClosedConvex0} implies Part \beqref{item:ClosedConvex}. Part \beqref{item:PositiveDefinite} is implied by Proposition \bref{prop:PositiveDefiniteG}. Part \beqref{item:NotPositiveDefinite} (existence and non-uniqueness) is a consequence of Proposition \bref{prop:ExistenceGisNotPositiveDefinite}, Corollary \bref{cor:SelfAdjoint}, Propositions \bref{prop:X=R} and \bref{prop:A*-1GA-1=G}, and Examples \bref{ex:Cone}--\bref{ex:L1Linfty}. The non-existence part of Part \beqref{item:NotPositiveDefinite} follows from Proposition ~\bref{prop:NonExistence}. 
\end{proof}

\section{Concluding remarks and open problems}\label{sec:ConcludingRemarks}
We finish the paper with the following remarks and open problems.

\begin{remark}\label{rem:LaxMilgram}
As mentioned just before the formulation of the converse of the Lax-Milgram theorem (that is, before Lemma \bref{lem:EllipticOperator} above), our method of proof can be carried over to a more general setting. Indeed, the same conclusion holds in the case where $X$ is a real Banach space and $A:X\to X^*$ is an invertible positive semidefinite linear operator in the following sense: it is linear, continuous,  it is symmetric, namely $\langle Ax, y\rangle=\langle x, Ay\rangle$ for all $(x,y)\in X^2$, and it satisfies the positive semidefinite inequality, namely $\langle Ax,x\rangle\geq 0$ for all $x\in X$ (where $\langle x^*,x\rangle:=x^*(x)=:\langle x,x^*\rangle$ for all $x^*\in X^*$ and $x\in X$). The exact statement is presented in Lemma \bref{lem:EllipticOperatorGeneraized} in the appendix (Subsection \bref{subsec:LaxMilgramGeneralized} below).  

Moreover, Lemma \bref{lem:EllipticOperatorGeneraized} below can help in showing that a real Banach space $X$ is Hilbertian (namely it is  isomorphic to a real Hilbert space) if and only if there exists an invertible positive semidefinite linear operator $A:X\to X^*$. Indeed, the claim is obvious when $X=\{0\}$, and so from now on we assume that $X\neq \{0\}$. Assume first that $X$ is isomorphic to a real Hilbert space. Then 
there is a real Hilbert space $Z$ and an invertible continuous linear operator $L:X\to Z$. In particular, $Z^*\cong X^*$, where the linear isomorphism $\widetilde{L}:X^*\to Z^*$ satisfies $\widetilde{L}(x^*)=x^*\circ L^{-1}$ for each $x^*\in X^*$. Now define $A:X\to X^*$ by $(Ax)(y):=\langle Lx, Ly\rangle$ for all $x,y\in X$, where $\langle\cdot,\cdot\rangle$ is the inner product in $Z$. Then $A$ is positive semidefinite (in fact, coercive with coercivity coefficient $1/\|L^{-1}\|^{2}$) and the invertibility of $L$, together with the Riesz-Fr\'echet representation theorem, imply that $A$ is invertible. 
On the other hand, if $X$ is a real Banach space and there is some $A:X\to X^*$ which is an invertible positive semidefinite linear operator, then by defining the function $M:X^2\to\R$ by $M(x,y):=\langle Ax, y\rangle$ for every $(x,y)\in X^2$ (where now $\langle z^*,z\rangle:=z^*(z)=:\langle z,z^*\rangle$ for all $z^*\in X^*$ and $z\in X$) we can see that $M$ is a symmetric bilinear form which satisfies $|M(x,y)|\leq\|A\|\|x\|\|y\|$, as follows immediately from the assumptions on $A$. In particular, $\sqrt{M(x,x)}\leq \sqrt{\|A\|}\|x\|$ for each $x\in X$. Lemma \bref{lem:EllipticOperatorGeneraized} implies that $\sqrt{M(x,x)}=\sqrt{\langle Ax,x\rangle}\geq \sqrt{\|A^{-1}\|^{-1}}\|x\|$ for each $x\in X$. We conclude that $M$ is an inner product and the  norm induced by it is equivalent to the original norm of $X$, as required. 
\end{remark}

\begin{remark}\label{rem:ConvexGeometryG-NotPositiveDefinite}
It would be of interest to complete the classification of the set of solutions to \beqref{eq:FixedPointConvexGeometry} in the case where $G$ is not positive definite. At the moment this task seems to be out of reach when $\dim(X)\geq 2$, as can be seen by considering Sections \bref{sec:ExistenceGisNotPositiveDefinite}, \bref{sec:NonUniqueness} and \bref{sec:NonExistence}. In this connection, it  would be of interest to consider the case where $G$ is semi-skew and to prove, or disprove,   that \beqref{eq:FixedPointConvexGeometry} cannot have solutions $\emptyset\neq C\subseteq X$ in this case (the approach that we use in Section \bref{sec:NonExistence}  is heavily based on the assumption that $C\in\Kbo(X)$). Perhaps if one restricts attention to special classes of geometric objects, then one may be able to make some good progress related to this classification (Proposition \bref{prop:ClosedConvex0} ensures that we can restrict our attention to the class of all closed and convex subsets of $X$ which contain 0 not necessarily in their interior, but maybe classes of objects which are more restricted will be easier to deal with). On the other hand, when one considers classes which are different from $\Kbo(X)$, then $\M_C$ can have less pleasant properties: for instance, it can attain the value $\infty$. We believe that in these cases the more general definition of the polar of the Minkowski functional, namely $\M_C^{\circ}(x^*):=\inf\{\mu^*\geq 0: \langle x^*,x\rangle\leq \mu^*\M_C(x),\,\forall x\in X\}$ will be of help. We also believe that the technique of conversion of \beqref{eq:FixedPointConvexGeometry} to \beqref{eq:f_Tf} using Lemma \bref{lem:f=f^*G*} (as done in Proposition \bref{prop:NonExistence}), will be of help (here the inverse operation based on Lemma \bref{lem:gamma-C-is-one-to-one}\beqref{item:gamma_C is one-to-one}, whenever it is possible, seems to be useful too; one can also use Lemma \bref{lem:f=f^*G*} in the opposite direction, namely to find solutions to \beqref{eq:f_Tf} using solutions of \beqref{eq:FixedPointConvexGeometry}).  
\end{remark} 

\begin{remark}
 It would be of interest to consider a more general version of \beqref{eq:FixedPointConvexGeometry}, such as
\begin{equation}\label{eq:TB_1B_2}
C=T((GC+B_1)^{\circ})+B_2,
\end{equation}
where $\emptyset\neq C\subseteq X$ is the unknown, $T$ and $G$ are given invertible and continuous linear operators from $X$ to itself (possibly $T$ is positive definite,  even a positive scalar multiplication of the identity, but not necessarily; in fact, it would be of interest also to consider the case where $T$ and $G$ are nonlinear), and $B_1$ and $B_2$ are two given nonempty subsets of $X$ (possibly singletons, possibly convex, but not necessarily), and the sum is the Minkowski sum $S_1+S_2:=\{s_1+s_2: s_1\in S_1,\,s_2\in S_2\}$. In this case the right-hand side operator $C\mapsto T((GC+B_1)^{\circ})+B_2$ is order reversing when it acts on the class of all nonempty subsets $C\subseteq X$. It might be useful,  depending on the given parameters $T$, $G$, $B_1$ and $B_2$, to restrict attention to subclasses of this class, say to all the bounded and convex subsets of $X$ (after verifying that the right-hand side operator $C\mapsto T((GC+B_1)^{\circ})+B_2$ maps this subclass to itself). In this connection, we note that the closely related equation  $C^{\circ}=C+B_2$ was discussed briefly in \cite[Theorem 4.1]{Molchanov2015jour} under certain assumptions. More precisely, the ambient space $X$  is finite-dimensional and $B_2$ satisfies various conditions, among them that it is convex, closed and contains a  ball having 0 as its center.
\end{remark}

\begin{remark} 
 It would be of interest to classify all the solutions to the operator equations \beqref{eq:AG} and \beqref{eq:MoreSolutions} for general $G$ and $A$. Remark \bref{rem:GA}\beqref{item:C=AC} and also the proof of Proposition \bref{prop:ExistenceGisNotPositiveDefinite}  hint to the possibility that at least in some cases, the corresponding solutions might be described using the eigenvalues of the operator $G$. 
\end{remark}
\begin{remark} 
 It would be of interest to solve \beqref{eq:FixedPointConvexGeometry} while adopting the point of view of \cite[p. 147]{Barvinok2002book}, namely to regard $G$ as the unknown instead of $C$; in other words, to fix some nonempty, closed and convex subset $C\subseteq X$ and to find all the linear, continuous and invertible operators $G:X\to X$ such that  \beqref{eq:FixedPointConvexGeometry} holds (or to prove that no such $G$ exists). 
\end{remark}
\begin{remark}\label{rem:OrderReversingTheory}
The present  paper can be considered  a contribution to the theory of fixed points of order reversing mappings in a geometric context, where the considered fixed points are sets. It is of interest to note that in the last decade or so several other papers have investigated fixed points in the context of (computational)  geometry and  sets, among them \cite{AsanoMatousekTokuyama2007b-jour, AsanoMatousekTokuyama2007jour,  ImaiKawamuraMatousekReemTokuyama2010CGTA, KMT2012, KopeckaReemReich2012jour, MonterdeOngay2014jour, Reem2014jour, Reem2018jour, ReemReich2009jour}. The fixed points considered in these papers are called ``zone diagrams'', ``double zone diagrams'', ``trisectors'', and ``$k$-sectors''. Formally, each such fixed point is a tuple of sets which satisfies a certain geometric condition, namely this tuple solves a certain fixed point equation which is formulated in a geometric setting. As in the present paper, also in the case of   geometric fixed points mentioned above the corresponding operators which induce the fixed point equations are  order reversing (with respect to component-wise inclusion). 
\end{remark}

\section{Appendix: proofs of some claims}\label{sec:Appendix}
In this appendix we provide the proofs of the claims mentioned in Section \bref{sec:Tools} without proof and also some claims mentioned in Remark \bref{rem:LaxMilgram}. 

\subsection{Proofs of the claims mentioned in Section \bref{sec:Tools}}\label{subsec:ProofsSectionTools}
\begin{proof}[Proof of Lemma \bref{lem:gamma-C-is-one-to-one}]
\begin{enumerate}[(a)]
\item Fix $x\in C$. We have $x=1\cdot x\in C$ and thus \beqref{eq:gamma_C} implies that $\M_{C}(x)\leq 1$, namely $x\in C(1)$. Thus $C\subseteq C(1)$. On the other hand, suppose that $x\in C(1)$. Then $\M_{C}(x)\leq 1$. If $x=0$, then obviously $x\in C$. Assume that $x\neq 0$. According to \beqref{eq:gamma_C} and the fact that $\M_{C}(x)$ is finite, for each $k\in \N$, there exists $\mu_k\in [\M_{C}(x),\M_{C}(x)+(1/k))$ and $c_k\in C$ such that $x=\mu_k c_k$. Since $x\neq 0$ and $0<\M_C(x)\leq\mu_k\leq \M_C(x)+1\leq 2$ for each $k\in\N$, the equality $c_k=x/\mu_k$ implies that $c_k$ belongs to the line segment $[0.5x,(1/\M_C(x))x]$. This set is compact and hence the sequence $(c_k)_{k\in\N}$ has a subsequence $(c_{k_j})_{j\in\N}$ which converges, with respect to the norm of $X$, to some point $c$ which belongs to this set. Since $C$ is a closed set and since $c_k\in C$ for all $k\in \N$, we have $c\in C$. However, since $\lim_{k\to\infty}\mu_k=\M_{C}(x)$ and $x=\mu_{k}c_{k}$ for every $k\in \N$, it follows that $x=\lim_{j\to\infty}\mu_{k_j}c_{k_j}=\M_{C}(x)c$. Since $0<\M_{C}(x)\leq 1$, we see that $x$ is a convex combination of $0$ and $c$ and hence, by using the convexity of $C$, we conclude that $x\in C$. Therefore $C(1)\subseteq C$ as well. 
\item This claim was proved during the proof of the previous part (when we saw that $(1/\M_C(x))x=c$ for some $c\in C$). 
\item Suppose that $\M_{C_1}=\M_{C_2}$ for some $C_1,C_2\in \Kbo(X)$. Then $C_1(1)=C_2(1)$ and hence Part \beqref{item:C=C(1)} implies that $C_1=C_2$. 
\item 
To see that $\M_{GC}=\M_C\circ G^{-1}$ holds for an arbitrary nonempty subset $C$ of $X$, let  $x\in X$ be arbitrary. Then, as follows from \beqref{eq:gamma_C}, we have $\M_{GC}(x)=\{\mu\geq 0: x\in \mu GC\}=\{\mu\geq 0: G^{-1}x\in \mu C\}=\M_C(G^{-1}x)$, as required. As for the identity $(GC)^{\circ}=(G^*)^{-1}C^{\circ}$, we observe that from \beqref{eq:C^polar} and the change of variables $y^*=G^*x^*$ we have 
\begin{multline}
(GC)^{\circ}=\{Gc: c\in C\}^{\circ}\\
=\{x^*\in X: \langle x^*,Gc\rangle\leq 1\,\,\forall c\in C\}
=\{x^*\in X: \langle G^*x^*,c\rangle\leq 1\,\,\forall c\in C\}\\
=\{(G^*)^{-1}y^*\in X: y^*\in X,\, \langle y^*,c\rangle\leq 1\,\,\forall c\in C\}=(G^*)^{-1}C^{\circ}.
\end{multline}
Now suppose further that $C\in \Kbo(X)$. The linearity of $G$, and its invertibility and continuity (hence Lipschitz continuity) immediately imply that $GC$ is convex, closed and bounded. If $B$ is any open ball which contains the origin and is contained in $C$, then the assumptions that $G$ is linear, continuous and invertible imply (as a result of the open mapping theorem) that $GB$ is an open set which contains the origin and is contained in $GC$. We conclude from the previous lines that $GC\in \Kbo(X)$. As for the identity $(\M_C\circ G^{-1})^{\circ}=\M_C^{\circ}\circ G^*$, when we combine the equality $\M_{GC}=\M_C\circ G^{-1}$ with \beqref{eq:gamma_C^polar}, the change of variables $y=G^{-1}x$, and the definition of $G^*$, we see that for all $x^*\in X$, 
\begin{multline*}
\M_{GC}^{\circ}(x^*)=\sup_{x\neq 0}\frac{\langle x^*,x\rangle}{\M_{GC}(x)}
=\sup_{x\neq 0}\frac{\langle x^*,x\rangle}{\M_C(G^{-1}x))}\\
=\sup_{y\neq 0}\frac{\langle x^*,Gy\rangle}{\M_C(y)}=\sup_{y\neq 0}\frac{\langle G^*x^*,y\rangle}{\M_C(y)}=\M_C^{\circ}(G^*x^*).
\end{multline*}
\end{enumerate}
\end{proof}

\begin{proof}[Proof of Lemma \bref{lem:(gamma_C)_polar=gamma_(C_polar)}]
The convexity of $C^{\circ}$ follows directly from the linear expression in \beqref{eq:C^polar}, which also implies, in view of the continuity of the inner product, that $C^{\circ}$ is closed. To see that the origin is in the interior of $C$, consider the open ball with radius $1/\|C\|$ about the origin. For each $x^*$ in this ball the Cauchy-Schwarz inequality and the fact that the norms of the points in $C$ are bounded by $\|C\|$ imply that $\langle x^*,c\rangle\leq \|x^*\|\|c\|<(1/\|C\|)\|C\|=1$ for all $c\in C$. Hence $x^*\in C^{\circ}$, namely $ C^{\circ}$ contains the above-mentioned ball. To see that $C^{\circ}$ is bounded, let $0\neq x^*\in C^{\circ}$ be arbitrary. Since $C$ contains an open ball of radius $r_C$ about the origin, for each $\alpha\in (0,1)$, we have $c_{\alpha}:=(\alpha r_C/\|x^*\|)x^*\in C$. Thus  $\langle x^*,c_{\alpha}\rangle\leq 1$. Since $\langle x^*,c_{\alpha}\rangle=\alpha r_C\|x^*\|$, it follows that $\|x^*\|\leq 1/(\alpha r_C)$. This inequality obviously holds for $x^*=0$ as well. Hence $\|C^{\circ}\|\leq 1/(\alpha r_C)$ for each $\alpha\in (0,1)$, and thus $\|C^{\circ}\|\leq 1/r_C$.  We conclude from the previous lines that $C^{\circ}\in \Kbo(X)$.

We now turn to the identity $\M_C^{\circ}=\M_{C^{\circ}}$. It is immediate from \beqref{eq:gamma_C} and \beqref{eq:gamma_C^polar} that $\M_C^{\circ}(0)=0=\M_{C^{\circ}}(0)$. Now fix an arbitrary $0\neq x^*\in X$. We claim that $(1/\M_C^{\circ}(x^*))x^*\in C^{\circ}$. Indeed, let $c\in C$. Obviously $\langle x^*/\M_C^{\circ}(x^*),c\rangle\leq 1$ for $c=0$. If $c\neq 0$, then $\M_C(c)>0$ and $\langle x^*,c\rangle/\M_C(c)\leq \M_C^{\circ}(x^*)$ according to \beqref{eq:gamma_C^polar}. Since $\M_C(c)\leq 1$ according to Lemma \bref{lem:gamma-C-is-one-to-one}\beqref{item:C=C(1)} and since $\M_C^{\circ}(x^*)>0$, we have $\langle x^*/\M_C^{\circ}(x^*),c\rangle\leq \M_C(c)\leq 1$. This is true for each $c\in C$ and hence we conclude from \beqref{eq:C^polar} that $x^*/\M_C^{\circ}(x^*)\in C^{\circ}$. Thus $x^*\in \M_C^{\circ}(x^*) C^{\circ}$ and hence \beqref{eq:gamma_C} (with $C^{\circ}$ instead of $C$) implies that $\M_C^{\circ}(x^*)\geq \M_{C^{\circ}}(x^*)$. 

It remains to show that $\M_C^{\circ}(x^*)\leq \M_{C^{\circ}}(x^*)$. Let $\epsilon>0$ and $0\neq c\in C$ be arbitrary. Since $\M_C(c)$ is finite, it follows from \beqref{eq:gamma_C} that there exists $\mu\in [\M_C(c),\M_C(c)+\epsilon)$ such that $c\in\mu C$. Since $0\neq c$, we have $\mu\geq\M_C(c)>0$ and $c/\mu\in C$. Since $C$ is convex and $0\in C$, the inequality $\mu<\M_C(c)+\epsilon$ implies that $c/(\M_C(c)+\epsilon)\in [0,c/\mu]\subseteq C$. Now denote $\wt{C^{\circ}}(x^*):=\{\mu^*\geq 0: x^*\in \mu^* C^{\circ}\}$. The previous paragraph implies that $\M_C^{\circ}(x^*)\in \wt{C^{\circ}}(x^*)$ and hence $\wt{C^{\circ}}(x^*)\neq \emptyset$. Let $\mu^*\in \wt{C^{\circ}}(x^*)$ be arbitrary. The definition of $\wt{C^{\circ}}(x^*)$ implies that there exists $c^*\in C^{\circ}$ such that $x^*=\mu^*c^*$. Since $c^*\in C^{\circ}$, it follows from \beqref{eq:C^polar} that $\langle c^*,c'\rangle\leq 1$ for each $c'\in C$. In particular, $\langle c^*,c/(\M_C(c)+\epsilon)\rangle\leq 1$. This inequality and the equality $x^*=\mu^*c^*$ imply that  $\langle x^*,c/(\M_C(c)+\epsilon)\rangle\leq \mu^*$. By taking the limit $\epsilon\searrow 0$, we have $\langle x^*,c/\M_C(c)\rangle\leq \mu^*$ for every $\mu^*\in\wt{C^{\circ}}(x^*)$. We conclude that  $\langle x^*,c/\M_C(c)\rangle$ is a lower bound for $\wt{C^{\circ}}(x^*)$ and therefore, according to the definition of $\M_{C^{\circ}}(x^*)$ (namely, \beqref{eq:gamma_C} with $C^{\circ}$ instead of $C$), one has $\M_{C^{\circ}}(x^*)\geq \langle x^*,c/\M_C(c)\rangle$. This inequality holds for every $c\in C\backslash\{0\}$. Now let $x\in X\backslash\{0\}$ be arbitrary. Then $x=\alpha c$ for some $c\in C\backslash\{0\}$ and $\alpha>0$ (indeed, as follows from previous lines, we can take $\alpha:=\M_C(x)+\epsilon$ and $c:=x/\alpha$ for every $\epsilon>0$). Since $\M_C$ is positively homogenous, if follows that $x/\M_C(x)=c/\M_C(c)$. Since we already know that $\M_{C^{\circ}}(x^*)\geq \langle x^*,c/\M_C(c)\rangle$, we have $\M_{C^{\circ}}(x^*)\geq \langle x^*,x/\M_C(x)\rangle$ for each $x\in X\backslash\{0\}$ as well. We conclude from \beqref{eq:gamma_C^polar} that indeed $\M_{C^{\circ}}(x^*)\geq \M_C^{\circ}(x^*)$, as required. 
\end{proof}

\begin{proof}[Proof of Lemma \bref{lem:gamma_C_conjugate_polar}]
The assumption $C\in \Kbo(X)$ implies that both $\M_C(z)$ and $\M^{\circ}_C(z)$ belong to $[0,\infty)$ for all $z\in X$, and hence \beqref{eq:gamma_C_conjugate_polar}--\beqref{eq:0.5MC^2} are well defined. Now let $x\in X$ be arbitrary. It can be represented as $x=\lambda\theta$, where $\lambda\geq 0$ and $\theta\in X$ has the property that $\M_C(\theta)=1$ (indeed, for $x=0$ one can take $\lambda=0$ and $\theta:=y/\M_C(y)$ for some $y\neq 0$, and for $x\neq 0$ one can take $\lambda:=\M_C(x)$ and $\theta:=x/\M_C(x)$). This representation, the assumption  $-\phi(t)=-\infty$ for every $t\in (-\infty,0)$, the fact that $\M_C$ is positively homogenous, and the definition of the convex conjugation and polar operations, all imply that
\begin{multline}
(\phi\circ\M_C)^*(x^*)=\sup_{x\in X}[\langle x^*,x\rangle-\phi(\M_C(x))]
=\sup_{\lambda\geq 0}\sup_{\{\theta\in X:\,\, \M_C(\theta)=1\}}[\langle x^*,\lambda\theta\rangle-\phi(\M_C(\lambda\theta))]\\
=\sup_{\lambda\geq 0}\sup_{\{\theta\in X:\,\, \M_C(\theta)=1\}}[\lambda\langle x^*,\theta\rangle-\phi(\lambda(\M_C(\theta))]
=\sup_{\lambda\geq 0}\sup_{\{\theta\in X:\,\, \M_C(\theta)=1\}}[\lambda\langle x^*,\theta\rangle-\phi(\lambda)]\\
=\sup_{\lambda\geq 0}\left[-\phi(\lambda)+\lambda\sup_{\{\theta\in X:\,\, \M_C(\theta)=1\}}[\langle x^*,\theta\rangle]\right]
=\sup_{\lambda\geq 0}[-\phi(\lambda)+\lambda\M_C^{\circ}(x^*)]\\
=\sup_{\lambda\in\R}[\lambda\M_C^{\circ}(x^*)-\phi(\lambda)]=\phi^*(\M_C^{\circ}(x^*)).
\end{multline} 
Suppose now that in addition to the assumption that $\phi(t)=\infty$ for every $t\in (-\infty,0)$ we also assume that $\phi$ is differentiable over $[0,\infty)$, that $\phi(0)=0$, and that $\phi'$ is strictly increasing on $[0,\infty)$ and maps it onto itself. It follows immediately that $(\phi')^{-1}$ exists on $[0,\infty)$ and $\phi'(0)=0$.  The definition of $\phi^*$ and the assumption that $-\phi(t)=-\infty$ for $t\in(-\infty,0)$ imply that $\phi^*(\M_C^{\circ}(x^*))=\sup_{\lambda\geq 0}w(\lambda)$, where $w:[0,\infty)\to\R$ is defined by $w(\lambda):=\lambda\M_C^{\circ}(x^*)-\phi(\lambda)$ for each $\lambda\in [0,\infty)$. Since $C\in \Kbo(X)$ it follows from \beqref{eq:Inequality gamma_C^polar} that $\M_C^{\circ}(x^*)>0$ whenever $x^*\neq 0$. This fact, when combined with  elementary analysis and the fact that $(\phi')^{-1}$ exists on $[0,\infty)$, implies that $w$ attains a unique maximum at $\lambda:=(\phi')^{-1}(\M_C^{\circ}(x^*))$ whenever $x^*\neq 0$. But the same conclusion also holds when $x^*=0$ because the equalities $\phi(0)=\phi'(0)=0$ and the fact that $\phi'$ is increasing imply that $\phi$ is increasing as well, and hence $w(\lambda)=-\phi(\lambda)\leq -\phi(0)=w((\phi')^{-1}(0))$ for all $\lambda\in[0,\infty)$. It follows that $\phi^*(\M_C^{\circ}(x^*))=w((\phi')^{-1}(\M_C^{\circ}(x^*)))$, namely the first equality in \beqref{eq:phi(gamma_C)} holds. 

The second equality in \beqref{eq:phi(gamma_C)} is a consequence of the fundamental theorem of calculus and the  well-known identity $a \psi(a)=\int_0^{a}\psi(t)dt+\int_0^{\psi(a)}\psi^{-1}(t)dt$ which holds for every $a\geq 0$ and every $\psi:[0,\infty)\to [0,\infty)$ which is invertible and strictly increasing (thus continuous) and vanishes at 0 (that is, equality in Young's inequality \cite[Theorem 1]{Witkowski2006jour}, \cite[p. 226]{Young1912jour}; in our case $\psi(t):=(\phi')^{-1}(t)$, $t\in [0,\infty)$). Finally, by taking $\phi(t):=\frac{1}{2}t^2$, $t\geq 0$, we obtain \beqref{eq:0.5MC^2} from \beqref{eq:phi(gamma_C)}.
\end{proof}

\begin{proof}[Proof of Lemma \bref{lem:EllipticOperator}]
Define $h(x):=\frac{1}{2}\langle Ax,x\rangle$ for every $x\in X$. Then $h^*(x^*)=\frac{1}{2}\langle A^{-1}x^*,x^*\rangle$ for every $x^*\in X$, as follows from Lemma \bref{lem:QuadConj}. Since $h$ is proper, it is well known and follows immediately from \beqref{eq:f^*} that $h$ satisfies the Young-Fenchel inequality $h^*(x^*)+h(x)\geq \langle x^*, x\rangle$ for all $x,x^*\in X$. Hence, if we fix $\alpha>0$ and put $x^*:=\alpha x$ in this inequality, then for each $x\in X$, we have 
\begin{equation}\label{eq:alpha-beta}
\alpha\|x\|^2\leq \frac{1}{2}\alpha^2\langle A^{-1}x,x\rangle+\frac{1}{2}\langle Ax,x\rangle\\
\leq \frac{1}{2} \alpha^2\|A^{-1}\|\|x\|^2+\frac{1}{2}\langle Ax,x\rangle,
\end{equation}
where we used the definition of the operator norm and the Cauchy-Schwarz inequality in the second inequality above. We immediately obtain \beqref{eq:EllipticOperator} from \beqref{eq:alpha-beta} by taking $\alpha:=1/\|A^{-1}\|$ (obviously $\|A^{-1}\|\neq 0$ since $X\neq\{0\}$ and $A^{-1}$ is invertible). Finally, if $\beta>0$ is any other coercivity coefficient of the quadratic form $x\mapsto \langle Ax,x\rangle$, then $\beta\|x\|^2\leq \langle Ax,x\rangle\leq \|Ax\|\|x\|$ for every $x\in X$. Since $A$ is invertible, by letting $y:=Ax$ we see that $\beta \|A^{-1}y\|\leq \|y\|$ for all $y\in X$, and therefore, from the definition of  $\|A^{-1}\|$, we have $\|A^{-1}\|=\sup\{\|A^{-1}y\|/\|y\|: 0\neq y\in X\}\leq 1/\beta$. Thus $\beta\leq \|A^{-1}\|^{-1}$, as required. 
\end{proof}

\begin{proof}[Proof of Lemma \bref{lem:Ellipsoid}]
We start by proving Part \beqref{item:EllipsoidIsBounded}. Lemma \bref{lem:EllipticOperator} implies that for  $\beta:=1/\|A^{-1}\|>0$ we have $\beta\|x\|^2\leq\langle Ax,x\rangle$ for all $x\in X$ and, in particular, for each $x\in D$. But $\langle Ax,x\rangle\leq 1$ for each $x\in D$. Thus $\|x\|\leq 1/\sqrt{\beta}$ for every $x\in D$, namely $D$ is bounded.  The continuity of the inner product and of $A$ imply that $D$ is closed, and the linearity of $A$ and of the inner product imply that $D$ is convex. Finally, since $A\neq 0$, an immediate verification based on the definition of $A$ and on the Cauchy-Schwarz inequality shows that the open ball of radius $1/\sqrt{\|A\|}$ about the origin is contained in $D$. It follows from the previous lines that indeed $D\in \Kbo(X)$.

We continue with Part \beqref{item:EllipsoidPolarFunction}. The assertion obviously holds for $x=0$. Now fix $0\neq x\in X$. Let $\mu$ be an arbitrary positive number satisfying $x/\mu\in D$. The definition of $D$ and the assumption that $A$ is positive definite imply that  $0<\langle A(x/\mu),x/\mu\rangle\leq 1$, namely $\mu\geq \sqrt{\langle Ax,x\rangle}$ with equality when $\mu=\sqrt{\langle Ax,x\rangle}>0$. Thus $\inf\{\mu\geq 0: x\in \mu D\}=\sqrt{\langle Ax,x\rangle}$.  Hence \beqref{eq:gamma_C} implies that $\M_D(x)=\sqrt{\langle Ax,x\rangle}$ for every $x\in X$. 

Finally, it remains to prove part \beqref{item:EllipsoidPolarSet}. We already know from Part \beqref{item:EllipsoidIsBounded} that $D\in \Kbo(X)$. This fact allows us to apply Lemma \bref{lem:gamma_C_conjugate_polar} (equation \beqref{eq:0.5MC^2}) to $D$, and by recalling that the Minkowski functional and its polar are non-negative, we arrive at $\M_D^{\circ}=\sqrt{2\left(\frac{1}{2}\M_D^2\right)^*}$. Since $A$ is positive definite and invertible, Lemma \bref{lem:QuadConj} implies that the conjugate of $h(x):=\frac{1}{2}\langle Ax,x\rangle$, $x\in X$,  is $h^*(x^*)=\frac{1}{2}\langle A^{-1}x^*,x^*\rangle$, $x^*\in X$. This fact and Part \beqref{item:EllipsoidPolarFunction} imply that $(\frac{1}{2}\M_D^2)^*(x^*)=h^*(x^*)=\frac{1}{2}\langle A^{-1}x^*,x^*\rangle$ for each $x^*\in X$. Combining all of these equalities with Lemma \bref{lem:(gamma_C)_polar=gamma_(C_polar)}, we conclude that $\M_{D^{\circ}}(x^*)=\M_D^{\circ}(x^*)=\sqrt{2\left(\frac{1}{2}\M_D^2\right)^*(x^*)}=\sqrt{\langle A^{-1}x^*,x^*\rangle}$ for all $x^*\in X$. But Part \beqref{item:EllipsoidPolarFunction} implies that $\M_{\wt{D}}(x^*)=\sqrt{\langle A^{-1}x^*,x^*\rangle}$ for all $x^*\in X$, where $\wt{D}:=\{x^*\in X: \langle A^{-1}x^*,x^*\rangle\leq 1\}$. Hence $\M_{\wt{D}}=\M_{D^{\circ}}$, and from Lemma \bref{lem:gamma-C-is-one-to-one}\beqref{item:gamma_C is one-to-one} it follows that $D^{\circ}=\wt{D}$, as claimed. 
\end{proof}

\begin{proof}[Proof of Lemma \bref{lem:UpperBound}]
The assertion is obvious when $X=\{0\}$ and so from now on we suppose that $X\neq \{0\}$. Suppose to the contrary that $f$ is not constant. Since $f$ is proper (actually finite everywhere), convex, and locally (actually globally) bounded above, it is locally Lipschitz continuous at each point (see \cite[Theorem 5.21, p. 69]{VanTiel1984book}). In particular, it is continuous and hence lower semicontinuous on $X$. Therefore it follows from \cite[p. 89, Lemma 6.12, p. 90, Corollary 6.12]{VanTiel1984book} that $f$ can be represented as the pointwise supremum of a nonempty family of minorant affine functions, namely affine functions the graphs of which lie below (weak inequality) the graph of $f$. If all of these affine functions are constant, then so is $f$ itself, a contradiction to what we supposed. Hence one of the minorant affine functions must be non-constant. Therefore  there exist $0\neq a$ in the dual $X^*$ of $X$ and $\gamma\in\R$ such that $f(x)\geq a(x)+\gamma$ for all $x\in X$. Since $a\neq 0$, there exists $y_0\in X$ such that $a(y_0)\neq 0$. Define $x_0:=y_0$ if  $a(y_0)>0$, and $x_0:=-y_0$ if $a(y_0)<0$. It follows that $a(x_0)>0$. Therefore, in view of the linearity of $a$, for all $t>0$ we have $f(tx_0)\geq ta(x_0)+\gamma\to\infty$ as $t\to\infty$. In particular, $f$ cannot be globally bounded above, a conclusion which contradicts our assumption on $f$. Hence $f$ must be constant, as asserted. 
\end{proof}

\begin{proof}[Proof of Lemma \bref{lem:f=f^*G*}]
Since $C\in \Kbo(X)$, Lemma \bref{lem:gamma-C-is-one-to-one}\beqref{item:gamma_C composition} ensures that $GC\in \Kbo(X)$. Since $C$ solves \beqref{eq:FixedPointConvexGeometry}, the equality $C=(GC)^{\circ}$ and  Lemma \bref{lem:(gamma_C)_polar=gamma_(C_polar)} imply that $\M_C=\M_{(GC)^{\circ}}=\M^{\circ}_{GC}$ and thus $\frac{1}{2}\M_C^2=\frac{1}{2}(\M_{GC}^{\circ})^2$. From Lemma \bref{lem:gamma-C-is-one-to-one}\beqref{item:gamma_C composition} one has $\frac{1}{2}(\M_{GC}^{\circ})^2=\frac{1}{2}(\M_C^{\circ}\circ G^*)^2$. But for all $x\in X$, we have  $\frac{1}{2}(\M_C^{\circ}\circ G^*)^2(x)=\frac{1}{2}(\M_C^{\circ}(G^*(x)))^2=\frac{1}{2}(\M_C^{\circ})^2(G^*x)$ simply because of the definition of the composition of functions. Since $\frac{1}{2}(\M_C^{\circ})^2(G^*x)=(\frac{1}{2}\M_C^2)^*(G^*x)$ as a result of Lemma \bref{lem:gamma_C_conjugate_polar} (equation \beqref{eq:0.5MC^2}), this equality and the ones mentioned in previous lines imply that 
$\frac{1}{2}\M_C^2(x)=(\frac{1}{2}\M_C^2)^*(G^*x)$ for every $x\in X$. In other words, if we denote $f:=\frac{1}{2}\M_C^2$, then $f(x)=f^*(G^*x)$ for all $x\in X$, namely $f$ solves \beqref{eq:f=f^*G*}. 
\end{proof}

\subsection{Proofs of some claims mentioned in Remark \bref{rem:LaxMilgram}}\label{subsec:LaxMilgramGeneralized}

Before presenting the proofs, we recall that given a real Banach space $X$, we set $\langle x^*,x\rangle:=x^*(x)=:\langle x,x^*\rangle$ for all $x^*\in X^*$ and $x\in X$. Given $h:X\to [-\infty,\infty]$, its convex conjugate (Legendre-Fenchel transform) is defined by $h^*(x^*):=\sup\{\langle x^*,x\rangle-h(x): x\in X\}$ for all $x^*\in X^*$ and $x\in X$. Given $A:X\to X^*$ we say that  $A$ is an invertible positive semidefinite linear operator if $A$ is linear, continuous,  symmetric (namely $\langle Ax, y\rangle=\langle x, Ay\rangle$ for all $(x,y)\in X^2$), and it satisfies $\langle Ax,x\rangle\geq 0$ for all $x\in X$. An immediate verification shows that if some $A:X\to X^*$ is symmetric, then so is $A^{-1}$. A well-known fact that we need below is that for all $x\in X$, there exists $j(x)\in X^*$ such that $\|x\|=\|j(x)\|$ and $\langle j(x),x\rangle=\|x\|^2$. The proof of this fact can be found, for example, in \cite[Corollary 1.3, p. 3]{Brezis2011book}, and it is just a simple consequence of the Hahn-Banach theorem; the function $j:X\to X^*$ is called ``the normalized duality mapping''. 

\begin{lem}\label{lem:QuadConjGeneralized}
Let $X$ be a real Banach space and let $A:X\to X^*$ be a positive semidefinite invertible operator. For each $x\in X$, denote $h(x):=\frac{1}{2}\langle Ax, x\rangle$. Then $h^*(x^*)=\frac{1}{2}\langle A^{-1}x^*,x^*\rangle$ for all $x^*\in X^*$.
\end{lem}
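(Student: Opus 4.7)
The plan is to compute the supremum in the definition of $h^*$ directly, using a completing-the-square argument that mimics the Hilbert space case but only relies on the duality pairing between $X$ and $X^*$, the symmetry of $A$, and the fact that $A^{-1}$ exists. Fix $x^*\in X^*$ and set $y:=A^{-1}x^*\in X$. Then for every $x\in X$, the quantity to be maximized is
\begin{equation*}
q(x):=\langle x^*,x\rangle-\frac{1}{2}\langle Ax,x\rangle=\langle Ay,x\rangle-\frac{1}{2}\langle Ax,x\rangle.
\end{equation*}

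The key computation will be to show that $q$ attains its maximum at $x=y$ by expanding $q(y)-q(x)$. Using the symmetry of $A$ (so that $\langle Ay,x\rangle=\langle Ax,y\rangle$), a direct manipulation gives
\begin{equation*}
q(y)-q(x)=\tfrac{1}{2}\langle Ay,y\rangle-\langle Ay,x\rangle+\tfrac{1}{2}\langle Ax,x\rangle=\tfrac{1}{2}\langle A(y-x),y-x\rangle,
\end{equation*}
which is nonnegative because $A$ is positive semidefinite. Hence $\sup_{x\in X}q(x)=q(y)$, and substituting $x=y$ yields $q(y)=\tfrac{1}{2}\langle Ay,y\rangle=\tfrac{1}{2}\langle x^*,A^{-1}x^*\rangle$. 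Finally, the symmetry of $A^{-1}$ (an immediate consequence of the symmetry of $A$, as noted just before the lemma) lets us rewrite this as $\tfrac{1}{2}\langle A^{-1}x^*,x^*\rangle$, which is the required value of $h^*(x^*)$.

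There is essentially no obstacle here: the positive semidefiniteness of $A$ is precisely what turns the quadratic remainder $\tfrac{1}{2}\langle A(y-x),y-x\rangle$ into a nonnegative quantity, so no compactness, reflexivity, or duality-map argument is required, and the Hahn-Banach-type tools (such as the normalized duality mapping recalled just above the lemma) are not needed for this particular statement. The only small point worth verifying cleanly in the write-up is the symmetry of $A^{-1}$: given $u^*,v^*\in X^*$, one writes $u^*=Au$, $v^*=Av$ and computes $\langle A^{-1}u^*,v^*\rangle=\langle u,Av\rangle=\langle Au,v\rangle=\langle v^*,A^{-1}u^*\rangle$ by the symmetry of $A$, which justifies rewriting $\langle x^*,A^{-1}x^*\rangle$ as $\langle A^{-1}x^*,x^*\rangle$ at the end.
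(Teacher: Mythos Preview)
Your proof is correct and essentially identical to the paper's: both fix $x^*$, set the candidate maximizer to $A^{-1}x^*$ (your $y$, the paper's $x_m$), and verify via the completing-the-square identity $q(y)-q(x)=\tfrac{1}{2}\langle A(y-x),y-x\rangle\geq 0$ that this is indeed the maximum. Your added justification of the symmetry of $A^{-1}$ is exactly the ``immediate verification'' the paper records just before the lemma.
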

\begin{proof}
Fix some $x^*\in X^*$. The definition of $h^*$ implies that $h^*(x^*)=\sup\{k(x): x\in X\}$,  
where $k:X\to\R$ is defined by $k(x):=\langle x^*,x\rangle -h(x)$ for all $x\in X$. 
Since $A$ is invertible and self-adjoint, a simple verification shows that $k(x)=k(x_m)-\frac{1}{2}\langle A(x-x_m),x-x_m\rangle$ for all $x\in X$, where $x_m:=A^{-1}x^*$. This identity and the fact that $A$ is positive semidefinite imply that  $k(x)\leq k(x_m)$ for every $x\in X$, namely $k$ attains a global maximum 
at $x_m$. Thus $h^*(x^*)=k(x_m)=\langle x^*,A^{-1}x^*\rangle-\frac{1}{2}\langle AA^{-1}x^*, A^{-1}x^*\rangle=\frac{1}{2}\langle A^{-1}x^*,x^*\rangle$. 
\end{proof}
\begin{lem}\label{lem:EllipticOperatorGeneraized}
Given a real Banach space $X\neq\{0\}$, if $A:X\to X^*$ is a positive semidefinite and invertible linear operator, then $A$ is coercive (in particular, $A$ is positive definite). As a matter of fact, 
\begin{equation}\label{eq:EllipticOperator}
\langle Ax,x\rangle\geq \|A^{-1}\|^{-1}\|x\|^2,\quad \forall\, x\in X  
\end{equation}
and $\|A^{-1}\|^{-1}$ is the optimal (largest possible) coercivity coefficient.  
\end{lem}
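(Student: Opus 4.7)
The plan is to follow the Hilbert space proof of Lemma \bref{lem:EllipticOperator} almost verbatim, with the normalized duality mapping playing the role that the identification $X\cong X^*$ played there. Concretely, set $h(x):=\frac{1}{2}\langle Ax,x\rangle$ for all $x\in X$. By Lemma \bref{lem:QuadConjGeneralized}, which has just been established in this Banach setting, $h^*(x^*)=\frac{1}{2}\langle A^{-1}x^*,x^*\rangle$ for all $x^*\in X^*$. Since $h$ is finite everywhere, the Young-Fenchel inequality gives $h(x)+h^*(x^*)\geq\langle x^*,x\rangle$ for every $(x,x^*)\in X\times X^*$.

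Now fix $x\in X$ and, via the Hahn-Banach theorem, pick $j(x)\in X^*$ with $\|j(x)\|=\|x\|$ and $\langle j(x),x\rangle=\|x\|^2$. For an arbitrary $\alpha>0$, substitute $x^*:=\alpha j(x)$ into Young-Fenchel and use bilinearity of the duality pairing together with the operator-norm estimate $\langle A^{-1}j(x),j(x)\rangle\leq \|A^{-1}j(x)\|\,\|j(x)\|\leq \|A^{-1}\|\,\|j(x)\|^2=\|A^{-1}\|\,\|x\|^2$ to obtain
\[
\alpha\|x\|^2=\alpha\langle j(x),x\rangle\leq \frac{1}{2}\langle Ax,x\rangle+\frac{\alpha^2}{2}\|A^{-1}\|\,\|x\|^2.
\]
Choose $\alpha:=1/\|A^{-1}\|$ (which is legitimate since $X\neq\{0\}$ forces $A^{-1}\neq 0$ and hence $\|A^{-1}\|>0$), and rearrange to get precisely $\langle Ax,x\rangle\geq\|A^{-1}\|^{-1}\|x\|^2$, proving \beqref{eq:EllipticOperator}.

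For the optimality claim, suppose $\beta>0$ is any coercivity coefficient of $A$. Then for every $x\in X$ one has $\beta\|x\|^2\leq\langle Ax,x\rangle\leq\|Ax\|\,\|x\|$, so $\beta\|x\|\leq\|Ax\|$. Using invertibility and substituting $y:=Ax\in X^*$ this becomes $\beta\|A^{-1}y\|\leq\|y\|$ for every $y\in X^*$, which by the definition of the operator norm yields $\|A^{-1}\|\leq 1/\beta$, i.e.\ $\beta\leq\|A^{-1}\|^{-1}$, as asserted. Finally, coercivity trivially implies $\langle Ax,x\rangle>0$ for every $x\neq 0$, so $A$ is positive definite.

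The only genuinely new ingredient over the Hilbert case is the very first substitution: in a general Banach space there is no canonical way to feed a vector $x\in X$ as its ``own dual'' into the Young-Fenchel inequality, and the normalized duality mapping $j$ is the right replacement since it preserves norms and realizes the pairing $\langle j(x),x\rangle=\|x\|^2$. This is the only step where one could be tempted to go astray; once $x^*=\alpha j(x)$ is chosen the remainder is a routine optimization in $\alpha$ identical to the Hilbert argument, and the optimality half uses nothing beyond the definition of the operator norm.
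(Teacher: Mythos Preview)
Your proof is correct and essentially identical to the paper's own argument: both define $h(x)=\tfrac{1}{2}\langle Ax,x\rangle$, invoke Lemma~\bref{lem:QuadConjGeneralized} for $h^*$, apply Young--Fenchel with $x^*=\alpha j(x)$ via the normalized duality mapping, optimize by taking $\alpha=1/\|A^{-1}\|$, and then run the same substitution $y=Ax$ for optimality. There is no meaningful difference in approach or in any individual step.
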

\begin{proof}
Define $h(x):=\frac{1}{2}\langle Ax,x\rangle$ for every $x\in X$. Then $h^*(x^*)=\frac{1}{2}\langle A^{-1}x^*,x^*\rangle$ for every $x^*\in X^*$, as follows from Lemma \bref{lem:QuadConjGeneralized}. Since $h$ is proper, it is well known and follows immediately from the definition of $h^*$ that $h$ satisfies the Young-Fenchel inequality $h^*(x^*)+h(x)\geq \langle x^*, x\rangle$ for all $x\in X$ and $x^*\in X^*$. Now fix $\alpha>0$ and put $x^*:=\alpha j(x)$  for each $x\in X$. Using the property of $j(x)$ mentioned in the beginning of this subsection, and also using the definition of the operator norm  and the symmetry of the pairing $\langle\cdot,\cdot\rangle$, we get $\langle x^*,x\rangle=\alpha\|x\|^2$ and 
\begin{multline}\label{eq:A-1x*x*<=}
\langle A^{-1}x^*,x^*\rangle=\langle x^*,A^{-1}x^*\rangle=\alpha^2\langle j(x),A^{-1}j(x)\rangle\\
\leq \alpha^2\|j(x)\|\|A^{-1}\|\|j(x)\|=\alpha^2\|A^{-1}\|\|x\|^2. 
\end{multline}
Therefore 
\begin{multline}\label{eq:alpha-beta-generalized}
\alpha\|x\|^2=\langle x^*, x\rangle\leq h^*(x^*)+h(x)=\frac{1}{2}\langle A^{-1}x^*,x^*\rangle+\frac{1}{2}\langle Ax,x\rangle\\
\leq \frac{1}{2} \alpha^2\|A^{-1}\|\|x\|^2+\frac{1}{2}\langle Ax,x\rangle.
\end{multline}
We immediately obtain \beqref{eq:EllipticOperator} from \beqref{eq:alpha-beta-generalized} by taking $\alpha:=1/\|A^{-1}\|$ (obviously $\|A^{-1}\|\neq 0$ since $X\neq\{0\}$ and $A^{-1}$ is invertible). Finally, if $\beta>0$ is any other coercivity coefficient of the quadratic form $x\mapsto \langle Ax,x\rangle$, then $\beta\|x\|^2\leq \langle Ax,x\rangle\leq \|Ax\|\|x\|$ for every $x\in X$. Since $A$ is invertible, by taking an arbitrary $y\in X^*$ and letting $x:=A^{-1}y$, we see from the inequality $\beta\|x\|^2\leq \|Ax\|\|x\|$ that $\beta\|A^{-1}y\|^2\leq \|y\|\|A^{-1}y\|$. Thus $\beta \|A^{-1}y\|\leq \|y\|$ and therefore, from the definition of  $\|A^{-1}\|$, we have $\|A^{-1}\|=\sup\{\|A^{-1}y\|/\|y\|: 0\neq y\in X^*\}\leq 1/\beta$. Hence $\beta\leq \|A^{-1}\|^{-1}$, as required. 
\end{proof}


\noindent{\bf Acknowledgments}\vspace{0.1cm}\\
We would like to express our thanks to Radu Bo\c{t}, Andrzej Cegielski, Adam Dor-On, Eliahu Levy, Ben Passer, Orr Shalit and Boaz Slomka for helpful remarks. It is a pleasure for us to thank the referee for several  valuable  comments. The second author was partially supported by the Israel Science Foundation (Grants 389/12 and 820/17), by the Fund for the Promotion of Research at the Technion and by the Technion General Research Fund.

%

\begin{thebibliography}{10}

\bibitem{Artstein-AvidanMilman2008}
{\sc Artstein-Avidan, S., and Milman, V.}
\newblock The concept of duality for measure projections of convex bodies.
\newblock {\em J. Funct. Anal. 254\/} (2008), 2648--2666.

\bibitem{ArtsteinMilman2009}
{\sc Artstein-Avidan, S., and Milman, V.}
\newblock The concept of duality in convex analysis, and the characterization
  of the {L}egendre transform.
\newblock {\em Annals of Mathematics 169\/} (2009), 661--674.

\bibitem{Artstein-AvidanSlomka2012jour}
{\sc Artstein-Avidan, S., and Slomka, B.~A.}
\newblock Order isomorphisms in cones and a characterization of duality for
  ellipsoids.
\newblock {\em Selecta Math. (N.S.) 18\/} (2012), 391--415.

\bibitem{AsanoMatousekTokuyama2007b-jour}
{\sc Asano, T., Matou{\v{s}}ek, J., and Tokuyama, T.}
\newblock The distance trisector curve.
\newblock {\em Adv. Math. 212\/} (2007), 338--360.
\newblock A preliminary version in STOC 2006, pp. 336--343.

\bibitem{AsanoMatousekTokuyama2007jour}
{\sc Asano, T., Matou{\v{s}}ek, J., and Tokuyama, T.}
\newblock Zone diagrams: existence, uniqueness, and algorithmic challenge.
\newblock {\em SIAM J. Comput. 37\/} (2007), 1182--1198.

\bibitem{Barvinok2002book}
{\sc Barvinok, A.}
\newblock {\em A {C}ourse in {C}onvexity}, vol.~54 of {\em Graduate Studies in
  Mathematics}.
\newblock American Mathematical Society, Providence, RI, 2002.

\bibitem{BauschkeCombettes2017book}
{\sc Bauschke, H.~H., and Combettes, P.~L.}
\newblock {\em Convex {A}nalysis and {M}onotone {O}perator {T}heory in
  {H}ilbert {S}paces}, 2~ed.
\newblock CMS Books in Mathematics. Springer International Publishing, Cham,
  Switzerland, 2017.

\bibitem{BoroczkySchneider2008jour}
{\sc B{\"o}r{\"o}czky, K., and Schneider, R.}
\newblock A characterization of the duality mapping for convex bodies.
\newblock {\em Geom. Funct. Anal. 18\/} (2008), 657--667.

\bibitem{Brezis2011book}
{\sc Brezis, H.}
\newblock {\em Functional {A}nalysis, {S}obolev {S}paces and {P}artial
  {D}ifferential {E}quations}.
\newblock Universitext. Springer, New York, 2011.

\bibitem{Cegielski2012book}
{\sc Cegielski, A.}
\newblock {\em Iterative {M}ethods for {F}ixed {P}oint {P}roblems in {H}ilbert
  {S}paces}, vol.~2057 of {\em Lecture Notes in Mathematics}.
\newblock Springer, Heidelberg, 2012.

\bibitem{CensorReem2015jour}
{\sc Censor, Y., and Reem, D.}
\newblock Zero-convex functions, perturbation resilience, and subgradient
  projections for feasibility-seeking methods.
\newblock {\em Mathematical Programming (Ser. A) 152\/} (2015), 339--380.

\bibitem{Dor-On2017pc}
{\sc Dor-On, A.}
\newblock Personal communication, 2017.

\bibitem{DraganMorozanStoica2010book}
{\sc Dragan, V., Morozan, T., and Stoica, A.-M.}
\newblock {\em Mathematical {M}ethods in {R}obust {C}ontrol of
  {D}iscrete-{T}ime {L}inear {S}tochastic {S}ystems}.
\newblock Springer, New York, 2010.

\bibitem{GranasDugondji2003book}
{\sc Dugundji, J., and Granas, A.}
\newblock {\em Fixed {P}oint {T}heory}.
\newblock Springer Monographs in Mathematics. Springer-Verlag, New York, USA,
  2003.

\bibitem{Eppstein2005incol}
{\sc Eppstein, D.}
\newblock Quasiconvex programming.
\newblock In {\em Combinatorial and {C}omputational {G}eometry (edited by {J}.
  {E}. {G}oodman, {J}. {P}ach, and {E}. {W}elzl), {MSRI} {P}ublications,
  {C}ambridge {U}niv. {P}ress}, vol.~52. MSRI Publications, Cambridge Univ.
  Press, New York, NY, USA, 2005, pp.~287--331.

\bibitem{Goffin1980jour}
{\sc Goffin, J.-L.}
\newblock The relaxation method for solving systems of linear inequalities.
\newblock {\em Math. Oper. Res. 5\/} (1980), 388--414.
\newblock The paper has a supplementary version: J.-L. Goffin, On the Finite
  Convergence of the Relaxation Method for Solving Systems of Inequalities,
  Operations Research Center Report ORC 71-36, University of California at
  Berkeley, 1971.

\bibitem{Hiriart-UrrutyLemarechal2001book}
{\sc Hiriart-Urruty, J.-B., and Lemar\'echal, C.}
\newblock {\em Fundamentals of {C}onvex {A}nalysis}.
\newblock Grundlehren Text Editions. Springer-Verlag, Berlin, 2001.
\newblock (Abridged version of Convex analysis and Minimization Algorithms. I,
  II, Springer, 1993).

\bibitem{ImaiKawamuraMatousekReemTokuyama2010CGTA}
{\sc Imai, K., Kawamura, A., Matou\v{s}ek, J., Reem, D., and Tokuyama, T.}
\newblock Distance $k$-sectors exist.
\newblock {\em Computational Geometry: Theory and Applications 43\/} (2010),
  713--720.
\newblock Preliminary versions in SoCG 2010, pp. 210-215, arXiv:0912.4164
  (2009).

\bibitem{IusemReemReich2019jour}
{\sc Iusem, A.~N., Reem, D., and Reich, S.}
\newblock Fixed points of {L}egendre-{F}enchel type transforms.
\newblock {\em J. Convex Anal. 26\/} (2019), 275--298.
\newblock arXiv:1708.00464 [math.CA] (2017) (current version: [v4], 8 April
  2019).

\bibitem{IusemReemSvaiter2015jour}
{\sc Iusem, A.~N., Reem, D., and Svaiter, B.~F.}
\newblock Order preserving and order reversing operators on the class of convex
  functions in {B}anach spaces.
\newblock {\em J. Funct. Anal. 268\/} (2015), 73--92.

\bibitem{KMT2012}
{\sc Kawamura, A., Matou\v{s}ek, J., and Tokuyama, T.}
\newblock Zone diagrams in {E}uclidean spaces and in other normed spaces.
\newblock {\em Mathematische Annalen 354\/} (2012), 1201--1221.
\newblock Preliminary versions in SoCG 2010, pp. 216-221, arXiv:0912.3016
  (2009).

\bibitem{KirkSims2001book}
{\sc Kirk, W.~A., and Sims, B.}, Eds.
\newblock {\em Handbook of {M}etric {F}ixed {P}oint {T}heory}.
\newblock Kluwer Academic Publishers, Dordrecht, 2001.

\bibitem{KopeckaReemReich2012jour}
{\sc Kopeck\'a, E., Reem, D., and Reich, S.}
\newblock Zone diagrams in compact subsets of uniformly convex spaces.
\newblock {\em Israel Journal of Mathematics 188\/} (2012), 1--23.
\newblock Preliminary versions in arXiv:1002.3583 [math.FA] (2010) and CCCG
  2010, pp. 17-20.

\bibitem{Kreyszig1978book}
{\sc Kreyszig, E.}
\newblock {\em Introductory {F}unctional {A}nalysis with {A}pplications}.
\newblock John Wiley \& Sons, New York-London-Sydney, 1978.

\bibitem{LaxMilgram1954incol}
{\sc Lax, P.~D., and Milgram, A.~N.}
\newblock Parabolic equations.
\newblock In {\em Contributions to the {T}heory of {P}artial {D}ifferential
  {E}quations}, Annals of Mathematics Studies, no. 33. Princeton University
  Press, Princeton, N. J., 1954, pp.~167--190.

\bibitem{AlvesSvaiter2011}
{\sc Marques~Alves, M., and Svaiter, B.~F.}
\newblock Fixed points of generalized conjugations.
\newblock {\em J. Convex Anal. 18\/} (2011), 577--588.

\bibitem{MilmanSegalSlomka2011jour}
{\sc Milman, V.~D., Segal, A., and Slomka, B.~A.}
\newblock A characterization of duality through section/projection
  correspondence in the finite dimensional setting.
\newblock {\em J. Funct. Anal. 261\/} (2011), 3366--3389.

\bibitem{Molchanov2015jour}
{\sc Molchanov, I.}
\newblock Continued fractions built from convex sets and convex functions.
\newblock {\em Commun. Contemp. Math. 17\/} (2015), 1550003 (18 pp.).

\bibitem{MonterdeOngay2014jour}
{\sc Monterde, J., and Ongay, F.}
\newblock The distance trisector curve is transcendental.
\newblock {\em Geom. Dedicata 171\/} (2014), 187--201.

\bibitem{Reem2014jour}
{\sc Reem, D.}
\newblock On the existence of a neutral region {II}: the implicit relation
  case.
\newblock {\em Journal of Nonlinear and Convex Analysis 15\/} (2014), 525--538.
\newblock preliminaries versions in arXiv:1208.3124 [cs.CG] (2012) and ISVD
  2012, pp. 80-83.

\bibitem{Reem2018jour}
{\sc Reem, D.}
\newblock On the computation of zone and double zone diagrams.
\newblock {\em Discrete $\&$ Computational Geometry 59\/} (2018), 253--292.
\newblock arXiv:1208.3124 [cs.CG] (2012) (current version: [v6], December 31,
  2017).

\bibitem{ReemReich2017b-arXiv[v1]}
{\sc Reem, D., and Reich, S.}
\newblock Fixed points of polarity type operators.
\newblock {\em arXiv:1708.09741 [math.FA] ([v1], 30 Aug 2017)\/}.

\bibitem{ReemReich2009jour}
{\sc Reem, D., and Reich, S.}
\newblock Zone and double zone diagrams in abstract spaces.
\newblock {\em Colloquium Mathematicum 115\/} (2009), 129--145.
\newblock Preliminary version in: arXiv:0708.2668 (2007).

\bibitem{Rockafellar1970book}
{\sc Rockafellar, R.~T.}
\newblock {\em Convex {A}nalysis}.
\newblock Princeton Mathematical Series, No. 28. Princeton University Press,
  Princeton, NJ, USA, 1970.

\bibitem{Rotem2012jour}
{\sc Rotem, L.}
\newblock Characterization of self-polar convex functions.
\newblock {\em Bull. Sci. Math. 136\/} (2012), 831--838.

\bibitem{Rudin1991book}
{\sc Rudin, W.}
\newblock {\em Functional {A}nalysis}, 2~ed.
\newblock International Series in Pure and Applied Mathematics. McGraw-Hill,
  Inc., New York, 1991.

\bibitem{Schneider2008jour}
{\sc Schneider, R.}
\newblock The endomorphisms of the lattice of closed convex cones.
\newblock {\em Beitr\"age Algebra Geom. 49\/} (2008), 541--547.

\bibitem{SegalSlomka2013chapt}
{\sc Segal, A., and Slomka, B.~A.}
\newblock Duality on convex sets in generalized regions.
\newblock In {\em Asymptotic geometric analysis}, vol.~68 of {\em Fields Inst.
  Commun.} Springer, New York, 2013, pp.~289--298.

\bibitem{Shalit2017book}
{\sc Shalit, O.~M.}
\newblock {\em A {F}irst {C}ourse in {F}unctional {A}nalysis}.
\newblock CRC Press, Boca Raton, FL, USA, 2017.

\bibitem{Slomka2011jour}
{\sc Slomka, B.~A.}
\newblock On duality and endomorphisms of lattices of closed convex sets.
\newblock {\em Adv. Geom. 11\/} (2011), 225--239.

\bibitem{Stampacchia1964jour}
{\sc Stampacchia, G.}
\newblock Formes bilin\'eaires coercitives sur les ensembles convexes.
\newblock {\em C. R. Acad. Sci. Paris 258\/} (1964), 4413--4416.

\bibitem{Svaiter2003}
{\sc Svaiter, B.~F.}
\newblock Fixed points in the family of convex representations of a maximal
  monotone operator.
\newblock {\em Proc. Amer. Math. Soc. 131\/} (2003), 3851--3859.

\bibitem{VanTiel1984book}
{\sc van Tiel, J.}
\newblock {\em {C}onvex {A}nalysis: {A}n {I}ntroductory {T}ext}.
\newblock John Wiley and Sons, Universities Press, Belfast, Northern Ireland,
  1984.

\bibitem{Witkowski2006jour}
{\sc Witkowski, A.}
\newblock On {Y}oung's inequality.
\newblock {\em J. Inequal. Pure Appl. Math. 7\/} (2006), Article 164, 3 pp.

\bibitem{Young1912jour}
{\sc Young, W.~H.}
\newblock On classes of summable functions and their {F}ourier series.
\newblock {\em Proceedings of the Royal Society of London (Series A) 87\/}
  (1912), 225--229.

\end{thebibliography}
%

\end{document}